\definecolor{violet}{rgb}{0.0,0.2,0.7}
\definecolor{rouge2}{rgb}{0.8,0.0,0.2}
\setlist[enumerate]{
  label=(\thethm.\arabic*),
  % before={\setcounter{enumi}{\value{equation}}},
  % after={\setcounter{equation}{\value{enumi}}},
  itemsep=1ex
}
\setlist[itemize]{
  leftmargin=*,
  topsep=1ex,
  itemsep=1ex,
  label=$\circ$
}
\theoremstyle{plain}    
\newtheorem{thm}{Theorem}[section]
\theoremstyle{plain} 
\newtheorem{bigthm}{Theorem}
 \numberwithin{equation}{section} %% Comment out for sequentially-numbered
 \numberwithin{figure}{section} %% Comment out for sequentially-numbered
 \newtheorem{cor}[thm]{Corollary} %%Delete [thm] to re-start numbering
 \theoremstyle{plain}    
 \newtheorem{prop}[thm]{Proposition} %%Delete [thm] to re-start numbering
 \theoremstyle{plain}    
 \newtheorem{lem}[thm]{Lemma} %%Delete [thm] to re-start numbering
 \theoremstyle{remark}
  \newtheorem{claim}[thm]{Claim} %%Delete [thm] to re-start numbering
 \theoremstyle{remark}
 \newtheorem{rem}[thm]{Remark}
 \newtheorem*{rem-plain}{Remark}
\newtheorem{exa}[thm]{Example}
\newtheorem{setup}[thm]{Setup}
\theoremstyle{plain}
\theoremstyle{definition}
\newtheorem{defi}[thm]{Definition}
\newcommand{\inv}{^{-1}}
\newcommand{\from}{\colon}
\newcommand{\imp}{\Rightarrow}
\newcommand{\lto}{\longrightarrow}
\newcommand{\inj}{\hookrightarrow}
\newcommand{\bij}{\xrightarrow{\,\smash{\raisebox{-.5ex}{\ensuremath{\scriptstyle\sim}}}\,}}
\newcommand{\isom}{\cong}
\newcommand{\defn}{\coloneqq}
\newcommand{\ndef}{\eqqcolon}
\newcommand{\tensor}{\otimes}
\newcommand{\id}{\mathrm{id}}
\newcommand{\wt}{\widetilde}
\newcommand{\wh}{\widehat}
\newcommand{\dual}{^{\smash{\scalebox{.7}[1.4]{\rotatebox{90}{\textup\guilsinglleft}}}}}
\newcommand{\ddual}{^{\smash{\scalebox{.7}[1.4]{\rotatebox{90}{\textup\guilsinglleft} \hspace{-.5em} \rotatebox{90}{\textup\guilsinglleft}}}}}
\newcommand{\acts}{\ \rotatebox[origin=c]{-90}{\ensuremath{\circlearrowleft}}\ }
\newcommand{\factor}[2]{\left. \raise 2pt\hbox{$#1$} \right/\hskip -2pt \raise -2pt\hbox{$#2$}}
\DeclareMathOperator{\img}{im}
\DeclareMathOperator{\Aut}{Aut}
\DeclareMathOperator{\Gal}{Gal}
\DeclareMathOperator{\Exc}{Exc}
\DeclareMathOperator{\supp}{supp}
\DeclareMathOperator{\Ric}{Ric}
\DeclareMathOperator{\End}{End}
\newcommand{\orb}{\mathrm{orb}}
\newcommand{\set}[1]{\left\{ #1 \right\}}
\def\rd#1.{\lfloor{#1}\rfloor}
\def\rp#1.{\lceil{#1}\rceil}
\def\tw#1.{\langle{#1}\rangle}
\newcommand{\la}{\langle}
\newcommand{\ra}{\rangle}
\renewcommand{\O}[1]{\mathscr{O}_{#1}}
\newcommand{\Omegap}[2]{\Omega_{#1}^{#2}}
\newcommand{\Omegar}[2]{\Omega_{#1}^{[#2]}}
\newcommand{\T}[1]{\mathscr{T}_{#1}}
\newcommand{\can}[1]{\omega_{#1}}
\newcommand{\reg}[1]{{#1}_{\mathrm{reg}}}
\newcommand{\sing}[1]{{#1}_{\mathrm{sg}}}
\newcommand{\codim}[2]{\mathrm{codim}_{#1}(#2)}
\newcommand{\cc}[2]{\mathrm{c}_{#1} \! \left( #2 \right)}
\newcommand{\cpc}[3]{\mathrm{c}_{#1}^{#2} \! \left( #3 \right)}
\newcommand{\ccorb}[2]{\mathrm{\tilde c}_{#1} \! \left( #2 \right)}
\newcommand{\cpcorb}[3]{\mathrm{\tilde c}_{#1}^{#2} \! \left( #3 \right)}
\newcommand{\chorb}[2]{\mathrm{c}_{#1}^{\mathrm{orb}} \! \left( #2 \right)}
\newcommand{\piorb}[1]{\pi_1^{\mathrm{orb}} \!\left( #1 \right)}
\def\Hnought#1.#2.{\mathit{\Gamma} \!\left( #1, #2 \right)}
\def\HH#1.#2.#3.{\mathrm{H}^{#1} \!\left( #2, #3 \right)}
\def\HHdR#1.#2.{\mathrm{H}^{#1}_{\mathrm{dR}} \!\left( #2 \right)}
\def\HHdRc#1.#2.{\mathrm{H}^{#1}_{\mathrm{dR},\, \mathrm{c}} \!\left( #2 \right)}
\def\HHs#1.#2.{\mathrm{H}^{#1} \!\left( #2 \right)}
\def\HHsc#1.#2.{\mathrm{H}^{#1}_{\mathrm{c}} \!\left( #2 \right)}
\def\hh#1.#2.#3.{h^{#1} \!\left( #2, #3 \right)}
\def\RR#1.#2.#3.{R^{#1} #2_* #3}
\def\HHc#1.#2.#3.{\mathrm{H}_{\mathrm{c}}^{#1} \!\left( #2, #3 \right)}
\def\Hh#1.#2.#3.{\mathrm{H}_{#1} \!\left( #2, #3 \right)}
\def\Hhs#1.#2.{\mathrm{H}_{#1} \!\left( #2 \right)}
\def\Hom#1.#2.{\mathrm{Hom} \!\left( #1, #2 \right)}
\def\sHom#1.#2.{\mathscr{H}\!om \!\left( #1, #2 \right)}
\def\Ext#1.#2.#3.{\mathrm{Ext}^{#1} \!\left( #2, #3 \right)}
\def\sExt#1.#2.#3.{\mathscr{E}\!xt^{#1} \!\left( #2, #3 \right)}
\def\Link#1.#2.{\mathrm{Link} \!\left( #1, #2 \right)}
\newcommand{\B}{\ensuremath{\mathbb{B}}}
\newcommand{\C}{\ensuremath{\mathbb{C}}}
\newcommand{\PP}{\ensuremath{\mathbb{P}}}
\newcommand{\Q}{\ensuremath{\mathbb{Q}}}
\newcommand{\R}{\ensuremath{\mathbb{R}}}
\newcommand{\Z}{\ensuremath{\mathbb{Z}}}
\newcommand{\KE}{K{\"{a}}hler--Einstein\xspace}
\newcommand{\kahler}{K{\"{a}}hler\xspace}
\newcommand{\qe}{quasi-\'etale\xspace}
\newcommand{\canmod}{\mathrm{can}}
\newcommand{\cC}{{\mathcal{C}}}
\newcommand{\cN}{{\mathcal{N}}}
\newcommand{\cO}{{\mathcal{O}}}
\newcommand{\cT}{{\mathcal{T}}}
\def\1{\bold{1}}
\newcommand{\fE}{{\mathfrak{E}}}
\newcommand{\Xc}{X^{\circ}}
\renewcommand{\a}{\alpha}
\newcommand{\ab}{{\alpha\beta}}
\newcommand{\bg}{{\beta\gamma}}
\newcommand{\om}{\omega}
\newcommand{\vp}{\varphi}
\newcommand{\ep}{\varepsilon}
\renewcommand{\theta}{\vartheta}
\newcommand{\NS}[1]{\operatorname{N}^1(#1)_\Q}
\newcommand{\Chow}[2]{\operatorname{A}_{#2}(#1)}
\newcommand{\sE}{\mathscr E}
\newcommand{\sF}{\mathscr F}
\newcommand{\sL}{\mathscr L}
\newcommand{\lref}{\labelcref}
\def\todo#1.#2{ %
  \textcolor{Mahogany}{ %
    \footnotesize %
    \newline %
    {\color{Mahogany}\fbox{\parbox{.97\textwidth}{\textbf{#1:} #2}}} %
    \newline %
  } %
}
\definecolor{forrest}{RGB}{81,133,49}
\definecolor{mydarkblue}{RGB}{10,92,153}
\title[Equality in the M.--Y.~inequality and uniformization of klt pairs]{Equality in the Miyaoka--Yau inequality and uniformization of non-positively curved klt pairs}
\dedicatory{In memory of Jean-Pierre Demailly}
\author[Claudon]{Beno\^it Claudon}
\address{Univ Rennes, CNRS, IRMAR --- UMR 6625, F--35000 Rennes, France et Institut Universitaire de France}
\email{\href{mailto:benoit.claudon@univ-rennes1.fr}{benoit.claudon@univ-rennes1.fr}}
\urladdr{\href{https://perso.univ-rennes1.fr/benoit.claudon/}{perso.univ-rennes1.fr/benoit.claudon/}}
\author[Graf]{Patrick Graf}
\address{Lehrstuhl f\"ur Mathematik I, Universit\"at Bayreuth, 95440 Bayreuth, Germany}
\email{\href{mailto:patrick.graf@uni-bayreuth.de}{patrick.graf@uni-bayreuth.de}}
\urladdr{\href{https://patrickgraf.gitlab.io/en/}{www.graficland.uni-bayreuth.de}}
\author[Guenancia]{Henri Guenancia}
\address{Institut de Math\'ematiques de Toulouse, Universit\'e Paul Sabatier, 31062 Toulouse Cedex~9, France}
\email{\href{mailto:henri.guenancia@math.cnrs.fr}{henri.guenancia@math.cnrs.fr}}
\urladdr{\href{https://hguenancia.perso.math.cnrs.fr/}{hguenancia.perso.math.cnrs.fr/}}
\date{May 6, 2023}
\keywords{Miyaoka--Yau inequality, orbifold uniformization, klt pairs}
\subjclass[2010]{32J27, 14J60}
\begin{document}

\begin{abstract}
Let $(X, \Delta)$ be a compact \kahler klt pair, where $K_X + \Delta$ is ample or numerically trivial, and $\Delta$ has standard coefficients.
We show that if equality holds in the orbifold Miyaoka--Yau inequality for $(X, \Delta)$, then its orbifold universal cover is either the unit ball (ample case) or the affine space (numerically trivial case).
\end{abstract}

\maketitle

\begingroup
\hypersetup{linkcolor=black}
\tableofcontents
\endgroup

\section{Introduction}

Let $X$ be an $n$-dimensional compact \kahler manifold and let us assume that either
\begin{enumerate}[label=(\Roman*)]
\item\label{ample case} $K_X$ is ample (and $X$ is thus projective), or
\item\label{flat case} $K_X$ is numerically trivial (equivalently, $\cc1X = 0$ in $\HH2.X.\R.$).
\end{enumerate}
As a consequence of the existence of a \KE metric $\omega_{\mathrm{KE}}$ on $X$ (proved by Aubin \cite{Aubin} and Yau~\cite{Yau78}), the Chern classes of $X$ satisfy the \emph{Miyaoka--Yau inequality}
\begin{equation} \label{eq:MYsmooth} \tag{MY}
\big( 2(n + 1) \, \cc2X - n \, \cpc12X \big) \cdot \alpha^{n - 2} \ge 0.
\end{equation}
\stepcounter{equation}%
where in case~\lref{ample case}, we set $\alpha = [K_X]$, while in case~\lref{flat case}, $\alpha$ can be an arbitrary \kahler class.
Furthermore, in case of equality, the universal cover $\pi \from \wt X \to X$ is (biholomorphic to)
\begin{enumerate}[label=(\Roman*)]
\setcounter{enumi}{0}
\item the $n$-dimensional unit ball $\B^n = \set{ (z_1, \dots, z_n) \in \C^n \;\;\big|\;\; |z_1|^2 + \cdots + |z_n|^2 < 1 }$,
\item the $n$-dimensional affine space $\C^n$.
\end{enumerate}
We can reformulate the above conclusion by saying that
\begin{enumerate}[label=(\Roman*)]
\setcounter{enumi}{0}
\item $X = \factor{\B^n}\Gamma$ with $\Gamma \subset \mathrm{PU}(1, n) = \Aut(\B^n)$,
\item $X = \factor{\C^n}\Gamma$ with $\Gamma \subset \C^n \rtimes \mathrm{U}(n) = \Aut(\C^n, \pi^* \omega_{\mathrm{KE}})$,
\end{enumerate}
where in both cases, the action of $\Gamma$ on $\wt X$ is \emph{fixed point-free}.

It seems natural to investigate the general case of quotients by cocompact lattices $\Gamma \subset \Aut(\wt X)$ (with $\wt X = \B^n$ or $\C^n$ endowed with the Bergman metric or the flat metric, respectively), the action being of course assumed to be properly discontinuous.
The corresponding quotients are then naturally endowed with an orbifold structure that can be encoded in the datum of a \Q-divisor with standard coefficients (see \cref{set:MY} below).
To sum up, it is natural to consider pairs $(X, \Delta)$ when dealing with these quotients.

The question of uniformizing \emph{spaces} (as opposed to pairs) in the cases~\lref{ample case} and~\lref{flat case} has been considered in the framework of klt singularities.
To quote a few relevant papers:~\cite{GKP16, LuTaji18, GKPT15, GKPT20, GK20, CGG}.
This article grew out of an attempt to understand the general situation with an orbifold structure in codimension one.

Unfortunately, the parallels between cases~\lref{ample case} and~\lref{flat case} cannot be pursued throughout this introductory section since the difficulties (when dealing with the inequality~\lref{eq:MYsmooth} in the singular setting) are not of the same nature.
The following two facts illustrate this point:
\begin{itemize}
\item In case~\lref{ample case}, the variety $X$ is necessarily projective, but the codimension one part of the orbifold structure cannot be easily eliminated.
Therefore we have to use orbifold techniques in the proof.
\item In case~\lref{flat case}, we also need to consider (non-algebraic) compact \kahler spaces, but we can get rid of the codimension one part of the orbifold structure via a cyclic covering (see~\cref{shokurov}).
This enables us to assume that $\Delta = 0$ for most of the argument.
\end{itemize}
Due to this break in symmetry, we split the discussion according to the sign of the canonical bundle.

\subsection*{The canonically polarized case}

Let us recall the singular version of the inequality~\lref{eq:MYsmooth} as proven by the third-named author together with B.~Taji~\cite{GT16}.
When dealing with case~\lref{ample case}, we work in the following setting:

\begin{setup} \label{set:MY}
Let $(X, \Delta)$ be an $n$-dimensional klt pair, where $X$ is a projective variety and $\Delta$ has standard coefficients, i.e.~$\Delta = \sum_{i \in I} \big( 1 - \frac1{m_i} \big) \Delta_i$ with integers $m_i \ge 2$ and the $\Delta_i$ irreducible and pairwise distinct.
\end{setup}

\begin{thm}[\protect{$\subset$~\cite[Thm.~B]{GT16}}] \label{th:MY ineq}
Let $(X, \Delta)$ be as in \cref{set:MY}, and assume that $K_X + \Delta$ is big and nef.
Assume additionally that every irreducible component $\Delta_i$ of $\Delta$ is \Q-Cartier.
Then the following inequality holds:
\begin{equation} \label{eq:MY}
\big( 2(n + 1) \, \ccorb2{X, \Delta} - n \, \cpcorb12{X, \Delta} \big) \cdot [K_X + \Delta]^{n - 2} \ge 0.
\end{equation}
Here, $\ccorb2{X, \Delta}$ and $\cpcorb12{X, \Delta}$ denote the appropriate orbifold Chern classes of the pair $(X, \Delta)$, as defined e.g.~in~\cite[Notation~3.7]{GT16}. \qed
\end{thm}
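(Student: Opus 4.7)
The plan is to produce a negatively curved orbifold \KE metric on the pair $(X,\Delta)$ and then to run Yau's classical pointwise argument in this singular setting. A preliminary step reduces to the case where $K_X+\Delta$ is ample, either by passing to the log canonical model (which exists thanks to basepoint-freeness for klt pairs) or simply by perturbing $[K_X+\Delta]$ with a small multiple of an ample class and exploiting continuity of intersection numbers to recover the big-and-nef case in the limit.

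Next, I would invoke the existence of an orbifold \KE metric with conical singularities of angle $2\pi/m_i$ along each $\Delta_i$. In the log smooth case this is due to Campana--Guenancia--Paun and Jeffres--Mazzeo--Rubinstein; their results have been extended to klt ambient spaces by Guenancia--Paun and Berman--Boucksom--Eyssidieux--Guedj--Zeriahi. One thereby obtains a positive $(1,1)$-current $\omke \in c_1(K_X+\Delta)$ solving $\mathrm{Ric}(\omke) = -\omke + [\Delta]$ and smooth in the orbifold sense on the open locus $U \subset X$ where $X$ is smooth and $\supp \Delta$ has simple normal crossings.

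The crucial pointwise identity is Yau's observation that for any \KE metric $\om$ normalized by $\mathrm{Ric}(\om) = -\om$ one has
\[
\bigl( 2(n+1)\, c_2(T_X) - n\, c_1(T_X)^2 \bigr)(\om) \wedge \om^{n-2} \;=\; C_n\, |R_0|_\om^2\, \om^n,
\]
where $R_0$ denotes the traceless part of the curvature tensor; in particular the left-hand side is a pointwise nonnegative $(n,n)$-form. Applied to $\omke$ on the orbifold-smooth locus $U$, this should deliver the desired cohomological inequality, provided Chern--Weil theory identifies the resulting integrals with the orbifold intersection numbers appearing in~\lref{eq:MY}.

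The main obstacle is precisely this justification of Chern--Weil in the presence of both the klt singularities of $X$ and the conical singularities of $\omke$ along $\Delta$. The strategy is to pass to a log resolution $\pi \colon Y \to X$, to approximate $\omke$ by a family of genuine smooth \kahler metrics $\om_\e$ on $Y$ in which the cone angles along the strict transform of $\Delta$ have been smoothed out, and to perform Chern--Weil on $Y$. One then relates the resulting integrals to $\ccorb{2}{X,\Delta}$ and $\cpcorb{1}{2}{X,\Delta}$ via the formula of \cite[Notation~3.7]{GT16}; here the $\Q$-Cartier assumption on each $\Delta_i$ is essential, since it makes the divisorial part of the orbifold Chern classes an honest intersection-theoretic quantity on $Y$. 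Uniform mass bounds on the approximating Chern forms, combined with pointwise nonnegativity on an exhaustion of $U$, then allow passage to the limit $\e \to 0$ and yield~\lref{eq:MY}.
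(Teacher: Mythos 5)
First, note that the paper does not prove this statement at all: the \qed\ after the statement indicates that it is quoted verbatim from~\cite[Thm.~B]{GT16}, whose proof goes through orbifold semistability of the (adapted) cotangent sheaf with respect to $K_X+\Delta$, restriction to general complete-intersection surfaces, and a Bogomolov-type inequality for semistable (Higgs) sheaves --- not through a direct curvature computation with a singular \KE metric. The only contribution of the present paper on this point is the later removal of the \Q-Cartier hypothesis (\cref{GT16 B}), done either by \Q-factorialization or via \cref{ex strictly adapted}. So your proposal is a genuinely different, analytic route; unfortunately, as written it has two real gaps.

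The first gap is the reduction to the ample case. The perturbation argument does not work: replacing the polarization $[K_X+\Delta]$ by $[K_X+\Delta]+\e A$ while keeping $\ccorb1{X,\Delta}=[K_X+\Delta]$ is \emph{not} an instance of the ample-case statement (the Miyaoka--Yau inequality with respect to an arbitrary ample polarization is false in general), and a twisted \KE metric in the perturbed class only yields the pointwise identity up to an error term that you do not control; continuity of intersection numbers cannot substitute for this. Passing to the canonical model is also not free: orbifold second Chern classes are not invariant under crepant birational maps, and the inequality comparing $\ccorb2{X,\Delta}\cdot[K_X+\Delta]^{n-2}$ with the corresponding number on $(X_\canmod,\Delta_\canmod)$ is exactly the nontrivial content of \cref{can mod eq}, proved in this paper via Megyesi's surface inequality~\cite{Megyesi} and normal-bundle sequences --- you cannot simply invoke it. Worse, the components of $\Delta_\canmod$ may fail to be \Q-Cartier even if those of $\Delta$ are (cf.~\cref{qcartier}), so the very hypothesis you declare ``essential'' for your Chern--Weil step can be destroyed by this reduction, making it circular. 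The second gap is the step you yourself flag as the main obstacle: identifying the integrals of Chern forms of the conical/orbifold \KE metric over the smooth locus with the Mumford-style orbifold intersection numbers of~\cite[Notation~3.7]{GT16}. ``Uniform mass bounds'' on the Chern forms (which are quadratic in the curvature) of an approximating family on a log resolution are precisely what is not available near the non-orbifold (codimension $\ge 3$) klt locus; no known argument establishes them in this generality, which is why both~\cite{GT16} and~\cite{GKPT15} avoid this route, and why in the present paper the \KE metric is only brought in (Step~4 of \cref{sec MY eq}) \emph{after} it has been shown that $(X,\Delta)$ has quotient singularities everywhere. As it stands, your sketch assumes the hardest analytic point rather than proving it.
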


\begin{rem-plain}
In the above theorem, the assumption that the $\Delta_i$ be \Q-Cartier is not necessary, and establishing this is one of the (minor) contributions of this paper, cf.~\cref{GT16 B}.
While this may seem like an innocuous technical issue at first sight, eliminating the \Q-Cartier assumption will become crucial below when deducing \cref{T:general type} from \cref{MY eq}, see \cref{qcartier}.
\end{rem-plain}

As in the smooth case, it is interesting to characterize geometrically those pairs that achieve equality in~\lref{eq:MY}.
In the case where $\Delta = 0$, this has been achieved in~\cite[Thm.~1.2]{GKPT15} and~\cite[Thm.~1.5]{GKPT20}:
equality holds if and only if there is a finite \qe Galois cover $Y \to X$ such that the universal cover of $Y$ is the unit ball.
An expectation concerning the general case was formulated in~\cite[Sec.~10.2]{GKPT15}.
Our first main result confirms this expectation.

\begin{bigthm}[Uniformization of canonical models] \label{MY eq}
Let $(X, \Delta)$ be as in \cref{set:MY}.
Assume that $K_X + \Delta$ is ample and that equality holds in~\lref{eq:MY}.
Then the orbifold universal cover $\pi \from \wt X_\Delta \to X$ of $(X, \Delta)$ is the unit ball (cf.~\cref{univ cov}).
More precisely, $(\wt X_\Delta, \wt \Delta) \isom (\B^n, \emptyset)$.
\end{bigthm}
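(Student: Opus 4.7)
The plan is to combine the existence of a singular \KE metric on $(X, \Delta)$ with a Chern--Weil analysis of the equality case, and then to invoke an orbifold uniformization argument \`a la \cite{GKPT20} to identify $\wt X_\Delta$ with $\B^n$.

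Since $(X, \Delta)$ is klt with standard coefficients and $K_X + \Delta$ is ample, there exists a unique singular \KE metric $\omke \in c_1(K_X + \Delta)$ solving $\Ric(\omke) = -\omke + [\Delta]$ (Eyssidieux--Guedj--Zeriahi, Guenancia--P\u{a}un, Mazzeo--Rubinstein); it is smooth on the orbifold regular locus $X^\circ \subset X_{\mathrm{reg}} \setminus \sing{(\supp \Delta)}$ of $(X, \Delta)$, with cone angles $2\pi/m_i$ transverse to the smooth points of each $\Delta_i$. The first step is a Chern--Weil representation of the orbifold Chern classes $\ccorb2{X,\Delta}$ and $\cpcorb12{X,\Delta}$ via curvature forms built from $\omke$, as already developed in~\cite{GT16}. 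A pointwise computation using the \KE equation then rewrites the quantity in~\lref{eq:MY} as
\[
\big( 2(n+1)\,\ccorb2{X,\Delta} - n\,\cpcorb12{X,\Delta} \big) \cdot [K_X + \Delta]^{n-2} \;=\; \frac{1}{n!} \int_{X^\circ} |R_0|^2_{\omke} \, \omke^n,
\]
where $R_0$ is the trace-free part of the curvature tensor of $\omke$ viewed as an endomorphism of $\Sym^2 \T{X}$ (Berger--Kobayashi--Ochiai).

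The equality assumption forces $R_0 \equiv 0$ on $X^\circ$, which by a classical identity is equivalent to $\omke$ having constant negative holomorphic sectional curvature on $X^\circ$. Consequently $X^\circ$ is locally isometric to $\B^n$ endowed with (a rescaling of) the Bergman metric. A developing map argument, applied to the orbifold universal cover $\wt X_\Delta \to X$ in the spirit of \cite{GKPT20}, extends these local isometries to a global biholomorphism $\wt X_\Delta \isom \B^n$. The lifted divisor $\wt \Delta$ is empty because the orbifold cover is by construction designed to unwind the ramification along $\Delta$, so that $\pi$ is \'etale over $X^\circ$ and ramifies to the correct orders along each $\Delta_i$.

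The main obstacle is twofold: promoting the integral equality to the pointwise vanishing of $R_0$, and globalizing the developing map across $\sing X$ and across the intersections of the components of $\Delta$ (where $\omke$ is genuinely singular). Two complementary strategies suggest themselves: (i) pull back to a suitable finite \qe Galois cover $Y \to X$, combined with a Kawamata-type construction, that absorbs the codimension-one orbifold structure, reducing to the $\Delta = 0$ case treated in \cite{GKPT20}; or (ii) work systematically in Satake's V-manifold/orbifold framework. Strategy (i) is precisely where the $\Q$-Cartier-free strengthening of \cref{th:MY ineq} flagged in the Remark becomes indispensable, because covering constructions are rarely compatible with $\Q$-Cartier-ness of individual components $\Delta_i$; strategy (ii) requires a delicate pointwise analysis of the conical \KE metric near the non-orbifold locus. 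I expect the actual argument to blend both: a cover clears the boundary divisor generically, and orbifold charts absorb the remaining local issues.
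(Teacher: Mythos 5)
Your outline reproduces, in spirit, only the \emph{last} step of the paper's argument (Step~4), and it skips the step that carries the real weight of the proof: showing that equality in~\lref{eq:MY} forces the pair $(X,\Delta)$ to have quotient singularities \emph{everywhere}. A priori $(X,\Delta)$ is merely klt, so its orbifold locus is only known to have complement of codimension $\ge 3$ (\cref{klt orb}), and the orbifold Chern numbers $\ccorb2{X,\Delta}$, $\cpcorb12{X,\Delta}$ are defined through Mumford-type covers over that locus. The singular/conic \KE metric you invoke is not an orbifold metric near the non-orbifold locus, and your displayed Chern--Weil identity expressing the Miyaoka--Yau quantity as $\int |R_0|^2$ is exactly the assertion that nobody knows how to justify in that generality; avoiding it is the whole point of the detour taken in the paper. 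There, Step~3 closes this gap by a nonabelian Hodge argument: one passes to a strictly adapted cover $f\from Y\to X$ from \cref{ex strictly adapted} (whose extra ramification sits over an auxiliary very ample divisor $H$), shows that $\sE_Y=\Omegar{(X,\Delta,f)}1\oplus\O Y$ with its tautological Higgs field is Higgs-stable and that the Chern numbers of $\End(\sE_Y,\theta_Y)$ vanish, then applies the Simpson correspondence for klt spaces and maximally quasi-\'etale covers \cite{GKPT20, GKP16} together with the Lipman--Zariski conjecture for klt spaces to conclude that a suitable cover branched exactly at $\Delta$ is smooth; varying $H$ gives that $(X,\Delta)$ has quotient singularities. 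Only after this does the compact orbifold \KE metric exist \cite{BG08}, the Chern--Weil computation become legitimate, and the uniformization go through --- and even there the paper does not use a developing map, but developability of non-positively curved orbifolds (Cartan--Hadamard, \cite{BH99}) followed by the Kobayashi--Nomizu characterization of $\B^n$ \cite{KN69II}.

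Your two proposed repairs do not fill this hole. Strategy~(i), reducing to $\Delta=0$ by a cover, is unavailable in the canonically polarized case: an orbi-\'etale cover absorbing the boundary exists only under the hypothesis of \cref{shokurov}, namely that $\O X(N\Delta)$ is an $N$-th reflexive power, which is automatic in the flat case (where $K_X+\Delta$ is torsion) but has no reason to hold when $K_X+\Delta$ is ample; the strictly adapted covers of \cref{ex strictly adapted} introduce \emph{extra} ramification along $H$ and therefore do not eliminate the codimension-one orbifold structure --- this asymmetry is stated explicitly in the introduction and is why the ample case is handled with orbifold and Higgs-sheaf techniques rather than by a covering trick, and it is also where the removal of the \Q-Cartier hypothesis actually matters (\cref{GT16 B}, \cref{qcartier}), not for the covering construction itself. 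Strategy~(ii) presupposes quotient singularities, i.e.\ presupposes the conclusion of Step~3. So, as written, the proposal names the two genuine obstacles but offers no working mechanism to overcome them; the missing ingredient is precisely the stability/Simpson-correspondence argument that converts the numerical equality into smoothness of a cover.
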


\noindent
In fact, a suitable converse of the above theorem also holds, and we obtain the following corollary.

\begin{cor}[Characterization of ball quotients] \label{main3}
Let $(X, \Delta)$ be as in \cref{set:MY}.
The following are equivalent:
\begin{enumerate}
\item\label{main3.1} $K_X + \Delta$ is ample, and equality holds in~\lref{eq:MY}.
\item\label{main3.2} The orbifold universal cover of $(X, \Delta)$ is the unit ball $\B^n$.
\item\label{main3.3} $(X, \Delta)$ admits a finite orbi-\'etale Galois cover $f \from Y \to X$ (cf.~\cref{adapted}), where $Y$ is a projective manifold whose universal cover is the unit ball.
\end{enumerate}
\end{cor}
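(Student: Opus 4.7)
The implication \lref{main3.1}$\Rightarrow$\lref{main3.2} is exactly the content of \cref{MY eq}, so it remains to prove \lref{main3.2}$\Rightarrow$\lref{main3.3} and \lref{main3.3}$\Rightarrow$\lref{main3.1}.

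For \lref{main3.2}$\Rightarrow$\lref{main3.3}, write $X = \factor{\B^n}{\Gamma}$ with $\Gamma = \piorb{X, \Delta}$ a cocompact discrete subgroup of $\Aut(\B^n) = \mathrm{PU}(1, n)$. Since $\mathrm{PU}(1, n)$ is a real linear Lie group, Selberg's lemma supplies a torsion-free subgroup $\Gamma_0 \leq \Gamma$ of finite index, which we may replace by its normal core so as to assume $\Gamma_0 \triangleleft \Gamma$. Set $Y \defn \factor{\B^n}{\Gamma_0}$: since $\Gamma_0$ is torsion-free and acts properly discontinuously and cocompactly, $Y$ is a compact complex manifold whose canonical bundle is ample (induced by the Bergman metric), hence $Y$ is projective with universal cover $\B^n$. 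The quotient $f \from Y \to X$ is a finite Galois cover with group $\Gamma/\Gamma_0$. For every $\tilde y \in \B^n$ the finite stabilizer $\Gamma_{\tilde y}$ satisfies $\Gamma_{\tilde y} \cap \Gamma_0 = \{ 1 \}$, so the ramification profile of $f$ over the image of $\tilde y$ is exactly the image of $\Gamma_{\tilde y}$ in the Galois group; matching this against the orbifold data encoded by $\Delta$, we conclude that $f$ is orbi-\'etale with respect to $(X, \Delta)$.

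For \lref{main3.3}$\Rightarrow$\lref{main3.1}, let $f \from Y \to X$ be as in \lref{main3.3}. The defining property of an orbi-\'etale cover yields $f^*(K_X + \Delta) = K_Y$ via Riemann--Hurwitz. Since the universal cover of the projective manifold $Y$ is $\B^n$, the canonical bundle $K_Y$ is ample, and by finiteness of $f$ this forces $K_X + \Delta$ to be ample as well. To obtain the MY equality, we invoke the Hirzebruch proportionality principle on the smooth ball quotient $Y$, which gives $\big( 2(n + 1) \, \cc2Y - n \, \cpc12Y \big) \cdot [K_Y]^{n - 2} = 0$ (in fact a stronger pointwise identity between Chern forms). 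The orbifold Chern classes of \cite{GT16} enjoy the pullback compatibility $f^* \ccorb i{X, \Delta} = \cc iY$ for $i = 1, 2$ precisely because $f$ is orbi-\'etale, so pulling back the orbifold expression and using $f^* [K_X + \Delta] = [K_Y]$ yields
\begin{equation*}
\deg(f) \cdot \big( 2(n + 1) \, \ccorb2{X, \Delta} - n \, \cpcorb12{X, \Delta} \big) \cdot [K_X + \Delta]^{n - 2} = 0,
\end{equation*}
whence the equality in~\lref{eq:MY}.

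\textbf{Expected obstacle.} The genuine analytic content is absorbed into \cref{MY eq}, so the remaining work is essentially formal. The only delicate bookkeeping concerns orbi-\'etale covers: one must confirm that torsion-freeness of $\Gamma_0$ is equivalent to the quotient $f$ being orbi-\'etale with the prescribed $\Delta$ on the base, and that the orbifold Chern classes of \cite{GT16} behave functorially under such covers. Both points are standard within the formalism of the paper, but they are the substantive inputs needed beyond \cref{MY eq}.
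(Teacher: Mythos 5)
Your proposal is correct and follows essentially the same route as the paper: \lref{main3.1}$\Rightarrow$\lref{main3.2} is \cref{MY eq}, \lref{main3.2}$\Rightarrow$\lref{main3.3} uses Selberg's lemma to produce a torsion-free finite-index normal subgroup and the resulting factorization of the orbifold universal cover, and \lref{main3.3}$\Rightarrow$\lref{main3.1} descends the smooth Miyaoka--Yau equality for the ball quotient $Y$ (the paper cites~\cite[(8.8.3)]{Kollar95} where you invoke proportionality) through the orbi-\'etale cover via multiplicativity of the orbifold Chern numbers. The bookkeeping points you flag are exactly the ones the paper handles, so no substantive difference.
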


\noindent
In the spirit of~\cite[Thm.~1.5]{GKPT20}, we can also prove the following uniformization statement for minimal pairs of log general type.

\begin{cor}[Uniformization of minimal models] \label{T:general type}
Let $(X, \Delta)$ be as in \cref{set:MY}.
Assume that $K_X + \Delta$ is big and nef and that equality holds in~\lref{eq:MY}.
Then the canonical model $(X, \Delta)_\canmod \ndef (X_\canmod, \Delta_\canmod)$ of the pair $(X, \Delta)$ is a ball quotient in the sense of \cref{MY eq}.
\end{cor}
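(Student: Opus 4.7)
The strategy is to reduce to \cref{MY eq} (the ample case) by passing to the canonical model. Since $(X, \Delta)$ is klt with standard coefficients and $K_X + \Delta$ is big and nef, the Base-Point-Free Theorem produces a birational morphism $f \colon X \to X_{\canmod}$ onto the canonical model. Setting $\Delta_{\canmod} \defn f_* \Delta$, the standard output of the MMP ensures that $(X_{\canmod}, \Delta_{\canmod})$ is again klt with standard coefficients, that $K_{X_{\canmod}} + \Delta_{\canmod}$ is ample, and that $f^*(K_{X_{\canmod}} + \Delta_{\canmod}) = K_X + \Delta$ (crepancy).

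To transfer the equality in \lref{eq:MY}, set $L = K_X + \Delta$ and $L' = K_{X_{\canmod}} + \Delta_{\canmod}$. By the projection formula $L^n = (L')^n$, so the first orbifold Chern class contribution
\[
\cpcorb12{X, \Delta} \cdot L^{n-2} \;=\; L^n \;=\; (L')^n \;=\; \cpcorb12{X_{\canmod}, \Delta_{\canmod}} \cdot (L')^{n-2}
\]
is automatically preserved. For the second orbifold Chern class term, I would argue on a common log resolution — keeping track of discrepancies and the $f$-exceptional locus — that
\[
\ccorb2{X, \Delta} \cdot L^{n-2} \;\ge\; \ccorb2{X_{\canmod}, \Delta_{\canmod}} \cdot (L')^{n-2}.
\]
Combined with the hypothesis of equality in \lref{eq:MY} on $(X, \Delta)$ and with the Miyaoka--Yau inequality applied to $(X_{\canmod}, \Delta_{\canmod})$, a squeeze argument then forces equality in \lref{eq:MY} on the canonical model. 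At that point \cref{MY eq} applies to $(X_{\canmod}, \Delta_{\canmod})$ and identifies its orbifold universal cover with $\B^n$, as required.

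The main obstacle is the comparison of orbifold second Chern numbers under the crepant birational contraction $f$. Equally importantly, even invoking the Miyaoka--Yau inequality on $(X_{\canmod}, \Delta_{\canmod})$ is non-trivial, since individual components of $\Delta_{\canmod}$ are generally not \Q-Cartier even when those of $\Delta$ are (the canonical model is typically more singular than its minimal models). This is exactly where the strengthened form of \cref{th:MY ineq} announced in the remark following it — dropping the \Q-Cartier assumption on the components of $\Delta$ — becomes indispensable, and explains why that technical upgrade is not merely cosmetic.
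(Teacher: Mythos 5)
Your overall skeleton matches the paper's: pass to the crepant morphism $\pi \from (X,\Delta) \to (X_\canmod,\Delta_\canmod)$, note that the $\cpcorb12{}{}$-terms agree by crepancy and the projection formula, invoke the Miyaoka--Yau inequality on the canonical model (where you rightly observe that the components of $\Delta_\canmod$ need not be \Q-Cartier, so the strengthened \cref{GT16 B} is essential), and squeeze to get equality there, after which \cref{MY eq} applies. However, the one step you leave as a sketch --- the comparison $\ccorb2{X,\Delta}\cdot[K_X+\Delta]^{n-2} \ge \ccorb2{X_\canmod,\Delta_\canmod}\cdot[K_{X_\canmod}+\Delta_\canmod]^{n-2}$ --- is exactly the mathematical content of the corollary, and the route you propose for it (a common log resolution, ``keeping track of discrepancies and the $f$-exceptional locus'') would not work. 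Orbifold second Chern classes of a klt pair are not computed on a log resolution: in the presence of codimension-two singularities the paper explicitly warns (\cref{subsec orbi res}) that a resolution fails to compute these numbers, which is why one must work with Mumford-type orbi-structures or orbi-resolutions. More importantly, discrepancies only record the behaviour of $K_X+\Delta$; they carry no information about $\ccorb2{}{}$, and the needed inequality is not a formal consequence of crepancy --- it is a genuine positivity statement about how the second Chern number changes when the minimal model contracts $(K_X+\Delta)$-trivial curves.

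The paper supplies this missing input by cutting down to surfaces: take a general surface $\Sigma \subset X_\canmod$ cut out by $|m(K_{X_\canmod}+\Delta_\canmod)|$ and $S \defn \pi\inv(\Sigma)$, which lie in the respective orbifold loci (\cref{klt orb}, \cref{surface}), observe that $(\Sigma,\Delta_\canmod)$ is the canonical model of the orbifold surface $(S,\Delta)$, and apply Megyesi's singular Bogomolov--Miyaoka--Yau comparison \cite[Thm.~4.2]{Megyesi}, which gives
\[
4\,\ccorb2{\Sigma,\Delta_\canmod} - \cpcorb12{\Sigma,\Delta_\canmod} \;\le\; 4\,\ccorb2{S,\Delta} - \cpcorb12{S,\Delta},
\]
hence $\ccorb2{\Sigma,\Delta_\canmod} \le \ccorb2{S,\Delta}$ by crepancy of $\pi|_S$. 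This surface inequality is then promoted to the $n$-dimensional Chern numbers via the orbifold normal bundle sequences for $S \subset X$ and $\Sigma \subset X_\canmod$, using that $\cN_{(S,\Delta)\mid(X,\Delta)} \isom \pi^*\cN_{(\Sigma,\Delta_\canmod)\mid(X_\canmod,\Delta_\canmod)}$ (these normal bundles are honest line bundles pulled back from $\Sigma$, even though $\pi|_S$ is not an orbifold morphism). Without this surface-theoretic input, or some substitute of comparable strength, your squeeze argument has no way to get off the ground; so the gap is the justification of the $\ccorb2{}{}$-inequality, not the architecture around it.
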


\subsection*{The flat case}

As mentioned earlier, \kahler quotients of $\C^n$ by cocompact groups of isometries are in general not projective, so we have to consider the following framework.

\begin{setup} \label{setup:flat}
Let $(X, \Delta)$ be an $n$-dimensional klt pair, where $X$ is a compact \kahler space and $\Delta$ has standard coefficients, i.e.~$\Delta = \sum_{i \in I} \big( 1 - \frac1{m_i} \big) \Delta_i$ with integers $m_i \ge 2$ and the $\Delta_i$ irreducible and pairwise distinct.
\end{setup}

In this more general \kahler setting, the methods of~\cite{GT16} cannot be used to prove a singular analogue of the Miyaoka--Yau inequality.
Instead, we rely on the Decomposition Theorem from~\cite{BakkerGuenanciaLehn20} to deduce the following singular version of the inequality~\lref{eq:MYsmooth} in case~\lref{flat case}.

\begin{thm}[Singular Miyaoka--Yau inequality] \label{th:MY singular Kahler}
Let $(X, \Delta)$ be as in~\cref{setup:flat} and assume that $\cc1{K_X + \Delta} = 0 \in \HH2.X.\R.$.
Let $\alpha \in \HH2.X.\R.$ be any \kahler class.
We then have:
\begin{equation} \label{ineq MY Kahler}
\ccorb2{X, \Delta} \cdot \alpha^{n-2} \geq 0.
\end{equation}
\end{thm}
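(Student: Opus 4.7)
The plan is to reduce to the case $\Delta = 0$ via a Shokurov-type orbi-\'etale cyclic cover, then invoke the singular Beauville--Bogomolov decomposition of \cite{BakkerGuenanciaLehn20} to split a klt Calabi--Yau K\"ahler space, up to further \qe covers, into a product of a complex torus, strict Calabi--Yau and irreducible holomorphic symplectic (IHS) factors, and finally verify the inequality factor by factor through a Chern--Weil computation with respect to a singular Ricci-flat K\"ahler metric.

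More precisely, the Shokurov-type construction announced in the introduction for the flat case (\cref{shokurov}) produces a finite Galois orbi-\'etale morphism $f \from Y \to X$ adapted to $\Delta$, so that $K_Y \sim_\Q f^*(K_X + \Delta)$ and in particular $\cc1{K_Y} = 0$; moreover $Y$ is a compact K\"ahler klt space, $f^* \ccorb2{X, \Delta} = \cc2Y$ in the appropriate reflexive sense, and $\beta \defn f^*\alpha$ is K\"ahler on $Y$. It therefore suffices to prove $\cc2Y \cdot \beta^{n - 2} \ge 0$. Applying \cite{BakkerGuenanciaLehn20}, we pass to a further finite \qe cover $Y' \to Y$ which splits as $T \times \prod_i Y_i \times \prod_j Z_j$, with $T$ a torus, the $Y_i$ strict Calabi--Yau, and the $Z_j$ IHS; as \qe covers preserve the reflexive second Chern class, we may replace $Y$ by $Y' = \prod_k F_k$. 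Since every factor other than $T$ is simply connected, the K\"unneth formula gives a splitting $\HH2.Y'.\R. = \bigoplus_k \HH2.F_k.\R.$ (no mixed terms survive), so the pullback K\"ahler class $\gamma$ writes as $\gamma = \sum_k p_k^* \gamma_k$ with each $\gamma_k$ a K\"ahler class on $F_k$; the vanishing of $c_1$ on each factor yields $\cc2{Y'} = \sum_k p_k^* \cc2{F_k}$. Expanding $\gamma^{n - 2}$ via the multinomial formula and pairing, the quantity $\cc2{Y'} \cdot \gamma^{n - 2}$ becomes a positive linear combination of terms of the shape $\big( \cc2{F_k} \cdot \gamma_k^{n_k - 2} \big) \cdot \prod_{l \ne k} \gamma_l^{n_l}$, so the problem reduces to a factor-wise estimate.

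For the torus factor this is trivial since $\cc2T = 0$. On a strict Calabi--Yau or IHS factor $F_k$, the singular Calabi--Yau theorem in the K\"ahler klt setting \cite{BakkerGuenanciaLehn20} produces a Ricci-flat K\"ahler metric $\omega_k$ in the class $\gamma_k$, smooth on $\reg{F_k}$ and with controlled behaviour near the singular set. Yau's classical curvature identity, valid on the smooth locus because $\Ric(\omega_k) = 0$, yields the pointwise non-negative expression
\[
\cc2{F_k} \wedge \omega_k^{n_k - 2} \;=\; \frac{1}{8\pi^2 (n_k - 2)!} \, |R_{\omega_k}|^2 \, \omega_k^{n_k},
\]
whose integral over $\reg{F_k}$ should recover the cohomological pairing $\cc2{F_k} \cdot \gamma_k^{n_k - 2}$. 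The main obstacle, and the only genuinely analytic point of the argument, is to control the curvature tensor of $\omega_k$ near the singular locus well enough to justify this last identification; this is precisely the singular Chern--Weil input provided by \cite{GT16, BakkerGuenanciaLehn20}. Combining the factor-wise non-negativity then delivers the desired inequality on $(X, \Delta)$.
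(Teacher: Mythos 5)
Your reduction steps coincide with the paper's: the orbi-\'etale cover killing $\Delta$, the passage to a split model via the Decomposition Theorem of \cite{BakkerGuenanciaLehn20}, and the K\"unneth argument reducing everything to factor-wise positivity on Calabi--Yau and IHS factors are exactly what the paper does. One citable omission already occurs at the first step: \cref{shokurov} requires $K_X+\Delta$ to be a \emph{torsion} class, whereas your hypothesis is only $\cc1{K_X+\Delta}=0$ in $\HH2.X.\R.$; bridging this is abundance for compact \kahler klt pairs with numerically trivial log canonical class, which the paper imports from \cite[Cor.~1.18]{JHM2} and which your write-up silently assumes.

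The genuine gap is the factor-wise step. You propose to take the singular Ricci-flat metric $\omega_k\in\gamma_k$ on a CY or IHS factor $F_k$, use Yau's pointwise identity on $\reg{F_k}$, and declare that the integral over $\reg{F_k}$ ``should recover'' $\ccorb2{F_k}\cdot\gamma_k^{\,n_k-2}$, attributing the needed ``singular Chern--Weil input'' to \cite{GT16,BakkerGuenanciaLehn20}. Neither reference supplies it: the Ricci-flat metrics of \cite{BakkerGuenanciaLehn20} are only known to be smooth on the regular locus with bounded potentials, with no control on their curvature near the singular set, and \cite{GT16} works with orbifold Hermite--Einstein metrics on adapted covers of projective pairs, not with singular Ricci-flat metrics. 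Worse, the class you must compute is the \emph{orbifold} class $\ccorb2{F_k}$, which records codimension-two quotient singularities; identifying it with a curvature integral over $\reg{F_k}$ for a metric whose behaviour at those singularities is unknown is precisely the open point your argument would need (orbifold regularity of weak \KE metrics, or $L^2$ curvature bounds), and circumventing it is the whole content of the paper's proof. Concretely, the paper proves factor-wise positivity by (i) for projective CY/IHS factors (\cref{cy}): passing to an orbi-resolution, available by \cite{LiTian19} only in the projective case, using stability of the tangent sheaf via the Bochner principle \cite{CGGN}, producing orbifold Hermite--Einstein metrics for the perturbed classes $\pi^*\beta+\ep\wh\beta$ \cite{EyssidieuxSala}, letting $\ep\to0$ through \cref{same c2}, and ruling out the degenerate case via \cite{LuTaji18}; and (ii) for possibly non-projective IHS factors (\cref{ihs}): showing that $\ccorb2{X}\cdot a$ is invariant under locally trivial deformations (the orbifold isotopy claim built on Thom's first isotopy lemma) and then deforming to a projective IHS variety using \cite{BL18,BakkerGuenanciaLehn20}. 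This detour exists exactly because neither orbi-resolutions nor a Chern--Weil identification for the singular Ricci-flat metric are available in the non-projective \kahler setting, so the step you label as ``the only genuinely analytic point'' is not a technicality but the missing core of the proof.
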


\noindent
As before, we are particularly interested in what happens if equality is achieved.

\begin{bigthm}[Uniformization in the flat case] \label{th:torus quotients}
Let $(X, \Delta)$ be as in \cref{setup:flat}.
Assume that $\cc1{K_X + \Delta} = 0 \in \HH2.X.\R.$ and that equality holds in~\lref{ineq MY Kahler} for \emph{some} \kahler class~$\alpha$.
Then the orbifold universal cover $\pi \from \wt X_\Delta \to X$ of $(X, \Delta)$ is the affine space (cf.~\cref{univ cov}).
More precisely, $(\wt X_\Delta, \wt \Delta) \isom (\C^n, \emptyset)$.
\end{bigthm}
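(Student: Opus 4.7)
The plan is to first remove the boundary $\Delta$ via a cyclic Shokurov-type cover, thereby reducing to a \cy klt space with trivial boundary; then apply the singular Beauville--Bogomolov decomposition of Bakker--Guenancia--Lehn; and finally use the MY equality to eliminate every factor except the torus.

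\textbf{Step 1: removing the boundary.} The cyclic covering alluded to just before \cref{setup:flat} produces a finite orbi-\'etale Galois cover $f \from Y \to X$ such that $(Y, 0)$ is klt, $Y$ is a compact \K space, $f^*(K_X + \Delta) \sim_\Q K_Y$, and $K_Y$ is numerically trivial. Since orbi-\'etale covers pull orbifold Chern classes back to ordinary Chern classes, one has
\begin{equation*}
\cc2Y \cdot (f^* \alpha)^{n - 2} \;=\; \deg(f) \cdot \ccorb2{X, \Delta} \cdot \alpha^{n - 2} \;=\; 0,
\end{equation*}
and the orbifold universal cover of $(X, \Delta)$ coincides with the topological universal cover of $Y$. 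It therefore suffices to show that the universal cover of $Y$ is $\C^n$.

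\textbf{Step 2: decomposition and factorwise analysis.} By the decomposition theorem of Bakker--Guenancia--Lehn~\cite{BakkerGuenanciaLehn20}, there is a finite quasi-\'etale Galois cover $\pi \from Z \to Y$ that splits as
\begin{equation*}
Z \;\isom\; T \times \prod_{i = 1}^r V_i \times \prod_{j = 1}^s S_j,
\end{equation*}
where $T$ is a complex torus, the $V_i$ are singular irreducible \cy varieties, and the $S_j$ are singular irreducible holomorphic-symplectic varieties. Set $\beta \defn \pi^* f^* \alpha$; another application of the orbi-\'etale transformation rule gives $\cc2Z \cdot \beta^{n - 2} = 0$. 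Since $\cc1 = 0$ on every factor and $\cc2T = 0$, the product formula for $\cc2$ reduces this identity to
\begin{equation*}
\sum_{i = 1}^r A_i \cdot \cc2{V_i} \cdot \beta_{V_i}^{\dim V_i - 2} \;+\; \sum_{j = 1}^s B_j \cdot \cc2{S_j} \cdot \beta_{S_j}^{\dim S_j - 2} \;=\; 0,
\end{equation*}
with strictly positive coefficients $A_i, B_j$ coming from \K volumes on the complementary factors. Each term is non-negative by \cref{th:MY singular Kahler} applied to the corresponding factor; hence each of them vanishes.

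\textbf{Step 3: conclusion and main obstacle.} It remains to rule out the vanishing $\cc2V \cdot \beta_V^{\dim V - 2} = 0$ for a positive-dimensional singular irreducible CY or IHS factor $V$; together with Step~2 this forces $r = s = 0$, so $Z = T$ is a torus, and its universal cover $\C^n$ is identified with the orbifold universal cover of $(X, \Delta)$, with $\wt \Delta = \emptyset$. This strict positivity is the technical heart of the argument: the singular \KE metric on $V$ provided by Eyssidieux--Guedj--Zeriahi represents $\cc2V \cdot \beta_V^{\dim V - 2}$ by a pointwise non-negative Chern--Weil integrand which vanishes identically only when the metric is flat, contradicting the non-trivial holonomy $\bigl($$\mathrm{SU}$ or $\mathrm{Sp}$$\bigr)$ that characterizes irreducible CY and IHS factors. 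Making this curvature computation rigorous across the singular locus is precisely what demands the smooth \K approximation and Chern-form convergence machinery developed in~\cite{BakkerGuenanciaLehn20}, and this is where I expect the real obstacle to lie.
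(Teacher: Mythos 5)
Your skeleton is the paper's: kill the boundary with the cyclic cover of \cref{shokurov}, apply the decomposition theorem of \cite{BakkerGuenanciaLehn20}, split the K\"unneth expansion of the K\"ahler class into factorwise terms with positive coefficients, and rule out all CY/IHS factors to be left with a torus. The gap is exactly where you locate it, and it is the heart of the matter: you need \emph{strict} positivity of $\ccorb2{V} \cdot \beta_V^{\dim V - 2}$ for every klt irreducible CY or IHS factor $V$ and an arbitrary, possibly transcendental, K\"ahler class $\beta_V$. (Quoting \cref{th:MY singular Kahler} for the non-negativity of each term is moreover circular: in the paper that theorem is proved simultaneously with the uniformization statement by this very argument, so the per-factor positivity cannot be borrowed from it.) Your proposed route via the singular \KE metric and its holonomy does not work as stated: since $V$ may be singular in codimension two, the number to control is the \emph{orbifold} class $\ccorb2{V}$, defined by Chern--Weil on the orbifold locus, and there is no available identification of this number with an integral of Chern forms of the singular Ricci-flat metric across the non-orbifold locus --- this is precisely the analytic difficulty the paper avoids rather than solves. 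The paper's substitute is two-fold: for projective factors (\cref{cy}) it passes to an orbi-resolution \cite{LiTian19}, uses stability of the tangent sheaf and orbifold Hermite--Einstein metrics \cite{EyssidieuxSala} together with \cref{same c2} to get non-negativity as in \cite{CGG}, and obtains strictness from \cite{LuTaji18} (vanishing for all K\"ahler classes, hence for an ample one, would make the factor a quotient of an abelian variety); for possibly non-projective IHS factors (\cref{ihs}) it proves that $\ccorb2{}\cdot a$ is invariant under locally trivial deformations (Thom's isotopy lemma, with an orbifold diffeomorphism preserving the orbifold locus, using \cref{lem:orbifold locus analytic}) and deforms to a projective IHS via \cite{BL18, BakkerGuenanciaLehn20}; CY factors are automatically projective, which is why only the IHS case requires the deformation argument. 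Without a substitute for these two propositions your proof is incomplete.

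Two further inaccuracies. First, to apply \cref{shokurov} you need $K_X + \Delta$ to be \Q-linearly trivial (torsion), not merely $\cc1{K_X + \Delta} = 0$ in $\HH2.X.\R.$; in the K\"ahler klt setting this upgrade is a genuine abundance-type input, which the paper takes from \cite[Cor.~1.18]{JHM2}, and your Step~1 silently assumes it. Second, your claim that the orbifold universal cover of $(X, \Delta)$ coincides with the \emph{topological} universal cover of $Y$ is false in general: it is the cover of $Y$ governed by $\pi_1(\reg Y)$, not $\pi_1(Y)$ (for a Kummer-type surface $Y = T/\pm 1$ with $\Delta = 0$, the topological universal cover is $Y$ itself while the orbifold universal cover is $\C^2$). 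Your final identification is nevertheless salvageable: the composite $\C^n \to Z \to Y \to X$ is a cover branched exactly at $\Delta$ whose associated subgroup of $\piorb{X, \Delta}$ is trivial, because meridians of the preimages of the $\Delta_i$ map to powers of the relations $\gamma_i^{m_i}$; but the argument must be run through \cref{orbifold cover pi1} and $\pi_1(\reg Y)$ rather than through $\pi_1(Y)$.
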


\noindent
As above, we can formulate a converse and get the following corollary.

\begin{cor}[Characterization of torus quotients] \label{th:characterization torus quotient}
Let $(X, \Delta)$ be as in \cref{setup:flat}.
The following are equivalent:
\begin{enumerate}
\item\label{torus.1} $\cc1{K_X + \Delta} = 0 \in \HH2.X.\R.$, and equality holds in~\lref{ineq MY Kahler} for some \kahler class~$\alpha$.
\item\label{torus.2} The orbifold universal cover of $(X, \Delta)$ is $\C^n$.
\item\label{torus.3} $(X, \Delta)$ admits a finite orbi-\'etale Galois cover $f \from T \to X$ (cf.~\cref{adapted}), where $T$ is a complex torus.
\end{enumerate}
\end{cor}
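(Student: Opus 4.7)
The plan is to establish the cycle of implications
$(\ref{torus.1}) \Rightarrow (\ref{torus.2}) \Rightarrow (\ref{torus.3}) \Rightarrow (\ref{torus.1})$.
The first is precisely the content of Theorem~\ref{th:torus quotients}. For $(\ref{torus.2}) \Rightarrow (\ref{torus.3})$, I would write $X = \C^n/\Gamma$ where $\Gamma$ is a subgroup of $\C^n \rtimes \mathrm{U}(n)$ acting properly discontinuously and cocompactly, with the quotient orbifold structure recovering $(X,\Delta)$. The classical Bieberbach theorems then imply that the translation subgroup $\Lambda \defn \Gamma \cap \C^n$ is a lattice of full rank in $\C^n$ and has finite index in $\Gamma$. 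Consequently $T \defn \C^n/\Lambda$ is a compact complex torus, and the induced map $f \from T \to X$ is a finite Galois cover with group $\Gamma/\Lambda$; by construction, its ramification is exactly compensated by the standard coefficients of $\Delta$, so $f$ is orbi-\'etale in the sense of~\cref{adapted}.

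For $(\ref{torus.3}) \Rightarrow (\ref{torus.1})$, orbi-\'etaleness of $f$ gives $f^*(K_X + \Delta) = K_T$, and since $K_T = 0$ and $f$ is finite and surjective we obtain $\cc1{K_X + \Delta} = 0$ in $\HH2.X.\R.$. To verify equality in~\lref{ineq MY Kahler}, I would pick any Kähler class $\alpha$ on $X$, set $\beta \defn f^*\alpha$ (which is Kähler on $T$), and invoke functoriality of orbifold Chern classes under orbi-\'etale morphisms---essentially built into their very definition---to get $f^*\ccorb2{X,\Delta} = \cc2T$. The latter vanishes since $T$ is a torus, so the projection formula yields
\[
\deg(f) \cdot \ccorb2{X,\Delta} \cdot \alpha^{n-2} \;=\; \cc2T \cdot \beta^{n-2} \;=\; 0,
\]
and equality is attained in~\lref{ineq MY Kahler}.

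The main delicate point will be in $(\ref{torus.2}) \Rightarrow (\ref{torus.3})$: one must verify carefully that the finite-index translation subgroup furnished by Bieberbach produces a genuinely orbi-\'etale cover of the \emph{pair} $(X,\Delta)$. Concretely, the stabilizers at general points of each $\Delta_i$ under the $\Gamma$-action must be cyclic of order exactly $m_i$ as dictated by the standard coefficients, and these must be exactly killed by passing from $\Gamma$ to $\Lambda$. Once the notion of orbifold universal cover from~\cref{univ cov} is carefully unwound, however, this should reduce essentially to definitions.
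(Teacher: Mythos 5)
Your proposal is correct and follows essentially the same route as the paper: \lref{torus.1}~$\Rightarrow$~\lref{torus.2} is \cref{th:torus quotients}, \lref{torus.2}~$\Rightarrow$~\lref{torus.3} is the Bieberbach argument extracting the finite-index translation lattice $\Lambda \subset \Gamma \subset \C^n \rtimes \mathrm{U}(n)$ and using that the universal cover is branched exactly at $\Delta$, and \lref{torus.3}~$\Rightarrow$~\lref{torus.1} rests on triviality of $K_T$ together with multiplicativity of the orbifold second Chern number under the orbi-\'etale cover. The only cosmetic difference is in \lref{torus.3}~$\Rightarrow$~\lref{torus.1}: the paper gets $\cc1{K_X+\Delta}=0$ from $G$-invariance of a pluricanonical volume form and tests equality against the class of a $G$-averaged K\"ahler metric descended from $T$, whereas you pull back an arbitrary K\"ahler class from $X$ and appeal to the multiplicativity identity (precisely the content of the paper's formula~\lref{eq:multiplicativity c2}), which is equally valid.
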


\noindent
The previous statements are thus generalizations of~\cite[Thm.~1.2]{LuTaji18} (itself elaborating on~\cite[Thm.~1.17]{GKP16}).
The generalization is threefold:
\begin{itemize}
\item Here $X$ is a compact \kahler space, not necessarily projective.
\item The class $\alpha$ is transcendental, a priori not an ample class.
\item Ramification is allowed in codimension one; i.e.~we work with klt pairs rather than klt spaces.
\end{itemize}

\subsection*{Acknowledgements}
We are honored to dedicate this paper to the memory of Jean-Pierre Demailly, who has been a constant source of inspiration and admiration to us.

B.C.~would like to thank Institut Universitaire de France for providing excellent working conditions.
H.G.~acknowledges the support of the French Agence Nationale de la Recherche (ANR) under reference ANR-21-CE40-0010.

\section{Generalities on orbifolds} \label{sec:orbifolds}

In this section, we consider Kawamata log terminal (klt) pairs $(X, \Delta)$ consisting of a normal algebraic variety or complex space $X$ of dimension $n$ and a \Q-divisor $\Delta = \sum_{i \in I} \big( 1 - \frac 1 {m_i} \big) \Delta_i$ on $X$, with $m_i \ge 2$.

\subsection{Orbi-structures and orbi-sheaves}

Most of the definitions and basic properties given below can be found in e.g.~\cite[\S 2]{GT16} in the slightly more general setting of \emph{dlt} pairs with standard coefficients, at least if $X$ is algebraic.
Working exclusively with klt pairs will simplify the exposition.

\begin{defi}[Adapted morphisms] \label{adapted}
Let $f \from Y \to X$ be a finite surjective Galois morphism from a normal variety or complex space $Y$.
One says that $f$ is:
\begin{itemize}
\item \emph{adapted} to $(X, \Delta)$ if for all $i \in I$, there exists $a_i \in \Z^{\ge 1}$ and a reduced divisor $\Delta_i'$ on $Y$ such that $f^* \Delta_i = a_i m_i \Delta_i'$,
\item \emph{strictly adapted} to $(X, \Delta)$ if it is adapted and if $a_i = 1$ for all $i \in I$,
\item \emph{orbi-étale} if it is strictly adapted and the divisorial component of the branch locus of $f$ is contained in $\supp(\Delta)$ --- equivalently, if $f$ is \'etale over $\reg X \setminus \supp(\Delta)$.
\end{itemize}
\end{defi}

\begin{rem-plain}
If $X$ is compact, then a map $f \from Y \to X$ as above is orbi-\'etale if and only if $K_Y = f^*(K_X + \Delta)$.
\end{rem-plain}

\begin{defi}[Orbi-structures] \label{orbi structures}
An \emph{orbi-structure} for the pair $(X, \Delta)$ consists of a compatible collection of triples $\cC = \big\{ ( U_\alpha, f_\alpha, X_\alpha ) \big\}_{\alpha \in J}$, where $(U_\alpha)_{\alpha \in J}$ is a covering of $X$ by étale-open subsets, and for each $\alpha \in J$, $f_\alpha \from X_\alpha \to U_\alpha$ is an adapted morphism from a normal complex space $X_\alpha$ with respect to the pair structure on $U_\alpha$ induced by $(X, \Delta)$.
The compatibility condition means that for all $\alpha, \beta \in J$, the projection map $g_{\alpha \beta} \from X_{\alpha \beta}\to X_\alpha$ is quasi-étale, where $X_{\alpha \beta}$ is the normalization of $X_\alpha \times_X X_\beta$.

An orbi-structure $\cC = \big\{ ( U_\alpha, f_\alpha, X_\alpha ) \big\}_{\alpha \in J}$ is called \emph{strict} (resp.~\emph{orbi-étale}) if for each $\alpha \in J$, the morphism $f_\alpha$ is strictly adapted (resp.~orbi-étale).
It is called \emph{smooth} if for each $\alpha \in J$, the variety $X_\alpha$ is smooth.
In this case, the maps $g_{\alpha \beta}$ are étale by purity of branch locus.
\end{defi}

\begin{defi}[Quotient singularities] \label{quot sing}
A pair $(X, \Delta)$ is said to have \emph{quotient singularities} if locally analytically on $X$, there exists an orbi-\'etale morphism $f \from Y \to X$, where $Y$ is smooth.
The maximal open subset of $X$ where this condition is satisfied will also be referred to as the \emph{orbifold locus} of $(X, \Delta)$ and will be denoted by $X^\circ \subset X$ or $X^\orb \subset X$.
\end{defi}

\begin{rem-plain}
With the above terminology, a pair $(X, \Delta)$ admits a smooth orbi-\'etale orbi-structure if and only if it has quotient singularities.
This is because the compatibility condition is automatically satisfied.
\end{rem-plain}

The following technical result will be useful in the sequel: a pair with quotient singularities whose underlying space is compact \kahler is a \kahler orbifold.
The log smooth case had been already observed in~\cite[Prop.~2.1]{Claudon08}.
Slightly more generally, we have the following.

\begin{lem}[Existence of orbifold \kahler metrics] \label{lem:kahler orbi}
Let $(Z, \Delta)$ be a pair with quotient singularities and such that $Z$ is a \kahler space.
Then for any relatively compact open subset $X \Subset Z$, there exists an orbifold \kahler metric $\omega$ adapted to $(X, \Delta|_{X})$ in the sense that $\omega$ is a \kahler metric on $\reg X \setminus \supp \Delta$ which pulls back to a smooth \kahler metric on the smooth local covers.
\end{lem}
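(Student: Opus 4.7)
The plan is to combine the given global K\"ahler form $\omega_Z$ on $Z$ with local orbifold K\"ahler potentials arising from the quotient-singularity structure, and to glue the local pieces together by a Varouchas-type argument. First I would cover the compact set $\wb X$ by finitely many open sets $U_1, \ldots, U_N \subset Z$ such that each $U_\alpha$ admits an orbi-\'etale Galois cover $f_\alpha \from V_\alpha \to U_\alpha$ with $V_\alpha$ smooth and, after shrinking, Stein. On each $V_\alpha$, averaging $dd^c |z|^2$ over the deck group $G_\alpha$ produces a smooth $G_\alpha$-invariant strictly plurisubharmonic function $\rho_\alpha$, descending to a continuous orbifold-smooth function $\bar\rho_\alpha$ on $U_\alpha$. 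The local form
\[
  \omega_\alpha \defn \omega_Z|_{U_\alpha} + dd^c \bar\rho_\alpha
\]
is then an orbifold K\"ahler form on $U_\alpha$: its pullback $f_\alpha^* \omega_Z + dd^c \rho_\alpha$ to $V_\alpha$ is the sum of a smooth closed semipositive form (coming from the K\"ahler assumption on $Z$) and a smooth K\"ahler form, hence smooth and K\"ahler.

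The next step is to patch the $\omega_\alpha$'s together. On every overlap the difference $\omega_\alpha - \omega_\beta = dd^c(\bar\rho_\alpha - \bar\rho_\beta)$ is $dd^c$-exact. The underlying orbifold-smoothness claim --- that $\bar\rho_\alpha \circ f_\beta$ is smooth on $f_\beta\inv(U_\alpha \cap U_\beta)$ --- follows from the compatibility condition in \cref{orbi structures}: the normalized fiber product $V_{\alpha\beta}$ maps quasi-\'etale to both $V_\alpha$ and $V_\beta$, and since the latter are smooth, these projections $p_\alpha, p_\beta$ are \'etale by purity of branch locus; hence $\bar\rho_\alpha \circ f_\beta \circ p_\beta = \rho_\alpha \circ p_\alpha$ is smooth on $V_{\alpha\beta}$ and descends smoothly through $p_\beta$. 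Choosing an orbifold-smooth partition of unity $\{\chi_\alpha\}$ on a neighborhood of $\wb X$ subordinate to $\{U_\alpha\}$, the most natural candidate is
\[
  \omega \defn \omega_Z + dd^c \Bigl( \sum_\alpha \chi_\alpha \, \bar\rho_\alpha \Bigr),
\]
which is manifestly a closed orbifold-smooth real $(1,1)$-form.

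The main obstacle will be strict positivity of $\omega$. Pulled back to $V_\beta$, the positive contribution $\sum_\alpha (\chi_\alpha \circ f_\beta) \, dd^c(\bar\rho_\alpha \circ f_\beta)$ is a strictly positive convex combination of smooth K\"ahler forms, but it competes against mixed terms such as $\bar\rho_\alpha \, dd^c \chi_\alpha$ and $d\chi_\alpha \wedge d^c \bar\rho_\alpha$ (pulled back to $V_\beta$), which are smooth and bounded but possibly negative; a uniform rescaling $\rho_\alpha \mapsto \lambda \rho_\alpha$ scales both parts identically in $\lambda$ and so does not help. My approach would therefore be to replace the simultaneous partition-of-unity construction by an iterated pairwise Varouchas-type gluing: given two orbifold K\"ahler forms on $U_\alpha$ and $U_\beta$ whose difference on the overlap is $dd^c$-exact, one patches them using a cutoff function whose transition occurs inside a region where one of the two forms already strictly dominates the bounded error from the cutoff's derivatives, and then iterates over the finite cover. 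This is exactly the strategy used in the log-smooth case treated in \cite[Prop.~2.1]{Claudon08}, and it extends to our setting once the orbifold-smoothness of the transition data on the fiber products has been verified as above.
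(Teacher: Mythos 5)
You correctly isolate the real difficulty (strict positivity after gluing), but the proposal stalls exactly there. Your dismissal of the partition-of-unity construction is premature: the paper makes precisely that construction work. The point you miss is that the error terms are not all of the same nature. The terms $\phi_\alpha\, dd^c\chi_\alpha$ and $d\chi_\alpha\wedge d^c\chi_\alpha$ involve only base-smooth data and the bounded potentials, hence are controlled by $C\,\omega_Z$ and can be absorbed by replacing $\omega_Z$ with $M\omega_Z$, $M\gg1$ (one rescales the \emph{background}, not the potentials, so your scaling objection does not apply). The genuinely dangerous terms $d\chi_\alpha\wedge d^c\phi_\alpha$ are handled by Cauchy--Schwarz, $\pm(d\phi_\alpha\wedge d^c\chi_\alpha+d\chi_\alpha\wedge d^c\phi_\alpha)\le \ep\, d\phi_\alpha\wedge d^c\phi_\alpha+\ep^{-1}d\chi_\alpha\wedge d^c\chi_\alpha$, together with the key uniform estimate $dd^c\phi_\beta\ge\delta\, d\phi_\alpha\wedge d^c\phi_\alpha$ on overlaps (valid because all potentials are smooth on a common local smooth cover, on which $dd^c\phi_\beta$ is \kahler and extends slightly beyond the relatively compact set): since at each point some $\chi_\beta\ge 1/N$, the gradient terms are swallowed by the positive contribution $\chi_\beta\, dd^c\phi_\beta$ for $\ep\ll\delta/N^2$, and the $\ep^{-1}$-terms by $M\omega_Z$. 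This two-parameter absorption is the idea your argument lacks.

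Your substitute, the iterated pairwise ``Varouchas-type'' gluing, is not a proof as written. The crucial claim --- that the cutoff's transition can be placed ``inside a region where one of the two forms already strictly dominates the bounded error from the cutoff's derivatives'' --- is exactly what needs justification: shrinking the transition region inflates $d\chi$ and $dd^c\chi$, the error terms involve $d^c$ of the (only orbifold-smooth) difference potential and are therefore \emph{not} controlled by the degenerate pullback of $\omega_Z$, and at that stage no globally positive form is available to absorb an error comparable to $C$ times an orbifold \kahler form with $C$ possibly much larger than the coefficient of the convex combination $\chi\omega_\alpha+(1-\chi)\omega_\beta$. A genuine Varouchas argument avoids cutoff derivatives altogether by taking regularized maxima of potentials, but then one must arrange strict domination of one potential over the other near the relevant boundaries (and check coherence on multiple overlaps), none of which your sketch addresses. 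So the proposal identifies the obstacle but neither fixes the direct construction nor carries out the alternative.
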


\begin{proof}
One can find an open neighborhood $X'$ of $\overline X \subset Z$ admitting a finite covering $X'=\bigcup_{\alpha\in I} X'_\alpha$ such that there exist smooth orbi-étale covers $p_\alpha\from Y'_\alpha\to X'_{\alpha}$. We set $X_\alpha \defn X'_\alpha \cap X$ and $Y_\alpha \defn p_\alpha^{-1}(X_\alpha)$.
We pick a Kähler metric $\om_Z$ on $Z$, as well as potentials $\phi_\alpha$ on $X_\alpha'$ such that  $dd^c p_\alpha^*\phi_\alpha$ is a Kähler metric on $Y_\alpha'$;  the functions $\phi_\alpha$ are solely continuous on $X_\alpha$ but $p_\alpha^*\phi_\alpha$ is smooth on $Y_\alpha'$.  We can assume that $|\phi_\alpha| \le 1$ on $X_\alpha$. Finally, let $(\chi_\alpha)_{\alpha \in I}$ be some partition of unity subordinate to the covering $(X_\alpha)_{\alpha\in I}$ and set $\phi \defn \sum \chi_\alpha \phi_\alpha$. We set $N \defn |I|$ and pick a constant $C>0$ such that 
\begin{equation}
\label{eq C}
\|dd^c\chi_\alpha\|_{\om_Z}^2+\|d\chi_\alpha\|^2_{\om_Z} \le C,
\end{equation} 
holds for any $\alpha\in I$ and we claim that the current
\[\omega \defn M\om_Z+dd^c \phi\]
is an orbifold Kähler metric on $X$ for $M\gg 1$. Clearly, $\omega$ is smooth as an orbifold differential form, as one can see directly by using the compatibility of the covers. Let $x\in X$ and let $J \defn \{\alpha\in I, x\in X_\alpha\}=\{\alpha_1, \ldots, \alpha_s\}$. We set $X_J \defn \cap_{\alpha \in J} X_\alpha$ and choose a connected component $Y_J$ of the normalization of $p_{\alpha_1}^{-1}(X_J)\times_{X_J} \cdots \times_{X_J} p_{\alpha_s}^{-1}(X_J)$.
The space $Y_J$ is a smooth manifold endowed with an orbi-étale map $p_J \from Y_J \to X_J$ induced by the $p_{\alpha_i}$, $i = 1, \dots, s$.

We have $1=\sum_{\alpha \in I} \chi_\alpha(x)=\sum_{\alpha \in J} \chi_\alpha(x)$, hence there exists $\beta\in J$ such that $\chi_\beta(x) \ge \frac 1N$. 
Since $p_J^*(dd^c\phi_\beta|_{X_J})$ is a Kähler metric on $Y_J$ (which extends slightly beyond), we infer that there exists $\delta>0$ such that 
\[\forall \alpha \in J, \quad dd^c\phi_\beta \ge \delta d\phi_\alpha \wedge d^c\phi_\alpha \quad \mbox{on } \, X_J.\]
Next, we have the following inequality for any $\ep>0$:
\[\pm   (d\phi_\alpha \wedge d^c\chi_\alpha+d\chi_\alpha \wedge d^c\phi_\alpha) \le \ep d\phi_\alpha \wedge d^c\phi_\alpha+\ep^{-1} d\chi_\alpha \wedge d^c\chi_\alpha.\]
Combining the above inequality with~\lref{eq C}, we get for any $\ep>0$:
\begin{align*}
\omega&=M\om_Z+\sum_{\alpha\in I} \chi_\alpha dd^c \phi_\alpha+\sum_{\alpha\in I} \phi_\alpha dd^c  \chi_\alpha+ \sum_{\alpha\in I}(d\phi_\alpha \wedge d^c \chi_\alpha+d\chi_\alpha \wedge d^c \phi_\alpha)\\
&\ge (M-NC(1+\ep^{-1}))\om_Z+ \chi_\beta dd^c \phi_\beta-\ep \sum_{\alpha\in I}d\phi_\alpha \wedge d^c \phi_\alpha
\end{align*}
which yields, at the point $x$: 
\[\omega\ge (M-NC(1+\ep^{-1}))\om_Z+\left(\frac 1N -\frac{N\ep}{\delta}\right) dd^c \phi_\beta.\]
Therefore, if we choose $\ep \defn \frac{\delta}{2N^2}$ and $M=2 NC(1+\ep^{-1})$, then $\om$ is an orbifold Kähler metric near $x$. Since $x$ is arbitrary and the constants $N,C,\delta$ are uniform, the lemma is now proved. 
\end{proof}

\subsection{Covering constructions}

In what follows, we present some variations on the well-known cyclic covering theme.
The first one, \cref{shokurov}, is a consequence of~\cite[Ex.~2.4.1]{Shokurov92} when $X$ is quasi-projective so that $K_X$ is well-defined as a (class of) Weil divisor, but one needs to argue slightly differently in the complex analytic case.
The second one, \cref{ex strictly adapted}, improves upon previous results such as~\cite[Prop.~2.9]{JabbuschKebekus11}, \cite[Ex.~2.11]{GT16} and~\cite[Prop.~2.38]{ClaudonKebekusTaji21}.
The main observation is that given a pair $(X, \Delta)$, it is (for our purposes) unnecessary to assume that the components of $\Delta$ are \Q-Cartier as long as $K_X + \Delta$ is.
As explained in \cref{qcartier}, this is crucial for proving \cref{T:general type}.

\begin{prop}[Existence of orbi-\'etale covers] \label{shokurov}
Let $(X, \Delta)$ be a (not necessarily klt) pair with standard coefficients, where $X$ is a normal complex space.
Assume that there is a reflexive rank~$1$ sheaf $\sL$ and an integer $N \ge 1$ such that $N \Delta$ is a \Z-divisor and
\[ \O X( N \Delta ) \isom \sL^{[N]}. \]
Then there exists an orbi-\'etale morphism $f \from Y \to X$. In particular:

If $(X, \Delta)$ is klt and there is an integer $N \ge 1$ such that $N \Delta$ is a \Z-divisor and $\can X^{[N]}(N \Delta) \isom \O X$, then we can find an orbi-\'etale morphism $f \from Y \to X$ such that $\can Y \isom \O Y$ and $Y$ has canonical singularities.
\end{prop}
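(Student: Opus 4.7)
The plan is to realize $Y$ as an irreducible component of the normalized cyclic cover of degree $N$ associated to the isomorphism $\sL^{[N]} \isom \O X(N\Delta)$, and to verify by a local-global combinatorial argument that its Galois group over $X$ automatically contains all the local monodromies $\mu_{m_i}$, so that the resulting cover is orbi-\'etale. (Note that $N\Delta \in \Z$ forces $m_i \mid N$ since $\gcd(m_i, m_i - 1) = 1$.)

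\emph{Construction and local picture.} The given isomorphism furnishes a section $s$ of $\sL^{[N]}$ with zero divisor $N\Delta$, which turns $\cA \defn \bigoplus_{k=0}^{N-1} \sL^{[-k]}$ into an $\O X$-algebra in the usual way. Let $\widetilde Y$ be the normalization of the relative analytic spectrum $\Spec_X \cA$; it carries an action of $\mu_N$ with quotient $X$, is \'etale over $\reg X \setminus \supp(\Delta)$, and near a generic point of $\Delta_i$ with local equation $z$ it is modelled on the normalization of $\{ y^N = z^{N - N/m_i} \}$. Setting $d_i \defn N/m_i$, the factorization $y^N - z^{N-d_i} = \prod_{\zeta^{d_i} = 1}(y^{m_i} - \zeta z^{m_i-1})$ exhibits this local normalization as a disjoint union of $d_i$ smooth branches, each parametrized by $(y, z) = (t^{m_i-1}, t^{m_i})$ up to a root of unity; on each branch the preimage of $\Delta_i$ is a reduced divisor above which the cover ramifies to order exactly $m_i$.

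\emph{Galois descent and orbi-\'etaleness.} Since $\widetilde Y \to X$ is generically a $\mu_N$-torsor over the connected base $\reg X \setminus \supp(\Delta)$, the $r$ irreducible components of $\widetilde Y$ are permuted transitively by $\mu_N$. Fix one such component $Y$ with stabilizer $H = \mu_{N/r} \subset \mu_N$; then $f \from Y \to X$ is a Galois cover of group $H$, and $Y$ is normal. The $\mu_N$-equivariant surjection from the set of $d_i$ local components above $\Delta_i$ onto the set of $r$ global components has equal fibers, forcing $r \mid d_i = N/m_i$ for every $i$; equivalently $\mu_{m_i} \subset H$. Combined with the local analysis above, this shows that $f$ has ramification index exactly $m_i$ along each component of $f^{-1}(\Delta_i)$ and is \'etale in codimension one over $\reg X \setminus \supp(\Delta)$; hence $f$ is strictly adapted and orbi-\'etale. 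For the klt statement, apply the first part with $\sL \defn \can X^{[-1]}$: the tautological $N$-th root $y$ of $s$ is then a rational section of $f^{[*]}\sL$ satisfying $N \cdot \mathrm{div}(y) = f^*(N\Delta)$, so $f^{[*]}\sL \isom \O Y(f^*\Delta)$, and the orbi-\'etale formula $\can Y \isom f^{[*]}(\can X \otimes \O X(\Delta))$ yields $\can Y \isom \O Y$. Since $(Y, 0)$ is klt (being the orbi-\'etale pullback of the klt pair $(X,\Delta)$) and Gorenstein, it has canonical singularities.

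\emph{Main obstacle.} The delicate point is that because the individual $\Delta_i$ are \emph{not} assumed $\Q$-Cartier, Shokurov's iterated tower of cyclic covers is unavailable, so one must work with a single cover of degree $N$ which is in general reducible. The key is the divisibility $r \mid N/\mathrm{lcm}(m_i)$ coming from the local monodromy at each $\Delta_i$: it automatically guarantees that \emph{any} irreducible component of $\widetilde Y$ has the correct ramification to be orbi-\'etale, circumventing the need to engineer the cover divisor by divisor.
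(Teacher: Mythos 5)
Your proposal is correct and is essentially the paper's own argument: both take the degree-$N$ cyclic cover determined by the section with divisor $N\Delta$, perform the identical normalization computation at a general point of $\Delta_i$ (the factorization $y^N - z^{N-d_i} = \prod_{\zeta^{d_i}=1}(y^{m_i}-\zeta z^{m_i-1})$, $d_i = N/m_i$, into $d_i$ branches each ramified to order exactly $m_i$), pass to one irreducible component, which is Galois over $X$ with group its stabilizer in $\mu_N$, and settle the klt case via $\sL = \can X^{[-1]}$. The only differences are cosmetic: you build the cover as the normalized relative analytic spectrum of $\bigoplus_{k=0}^{N-1}\sL^{[-k]}$, whereas the paper extends the \'etale $\mu_N$-cover of $\reg X\setminus\supp(\Delta)$ across the boundary using Dethloff--Grauert, and your divisibility step $r\mid d_i$ (equivalently $\mu_{m_i}\subset H$) is correct but superfluous, since every local branch already ramifies to order exactly $m_i$, so any irreducible component gives an orbi-\'etale cover regardless.
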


\begin{proof}
Let $\sigma \in \HH0.X.\sL^{[N]}.$ be such that $\operatorname{div}(\sigma) = N \Delta$, and let us consider the cyclic covering $g \from Z \to X$ induced by $\sigma$, cf.~e.g.~\cite[Def.~2.52]{KM}.
In the analytic setting, we can construct $f$ in the following way.
On $\reg X\setminus \supp(\Delta)$, $\sL |_{\reg X\setminus \supp(\Delta)}$ is torsion and it gives rise to an \'etale cover $g^\circ\from Z^\circ\to\reg X\setminus \supp(\Delta)$ (the $N^{\text{th}}$-root of $\sigma |_{ \reg X\setminus \supp(\Delta)}$) that is moreover a Galois cover with cyclic Galois group.
According to~\cite[Thm.~3.4]{DethloffGrauert94}, the map $g^\circ$ can be extended to a finite cover $f \from Z \to X$ with the same Galois group.

We claim that $g$ ramifies exactly at order $m_i$ along $\Delta_i$. It is enough to check the claim at a general point of $\Delta_i$. Therefore, there is no loss of generality assuming that  $(X,\Delta)=(U,(1-\frac 1m) D)$ where $U\subset \mathbb C^n$ ($n=\dim(X)$) is a ball, $D=(z_1=0)\cap U$, and that $\sigma|_U= z_1^{N(1-\frac 1m)} \sigma_{\sL,\, U}^{\otimes N}$ with $\sigma_{\sL,\, U}$ a trivializing section of $\sL$ over $U$.

Write $N=km$, and let $V \defn \{(t,z)\in \C \times \C^n \, \big| \, t^N=z_1^{k(m-1)}\} \subset \C\times \C^n$ and let $\nu\from V^\nu\to V$ be its normalization. One can actually write down exactly what $V^\nu$ is. Indeed, let $\zeta$ be a primitive $k$-th root of unity, and set $V_p \defn \{(t,z)\, \big| \, t^m=\zeta^p z_1^{m-1}\} \subset \C\times \C^n$ for $p=0,\ldots, k-1$. We have a decomposition $V=\cup_p V_p$ into irreducible components, and the normalization $\nu_p\from V_p^\nu \to V_p$ is the affine space $V_p^\nu \isom \C\times \C^{n-1}$ with map $\nu_p(u,w)=(\xi u^{m-1},u^m, w)$ where $\xi$ is an $m$-th root of $\zeta^p$. Now, set $V^\nu \defn \sqcup_p V_p^\nu$ and defined $\nu\from V^{\nu}\to V$ by $\nu|_{V_p^\nu} \defn \nu_p$.
We have a diagram
\[
\begin{tikzcd}
V^\nu \arrow[rr, bend left=35, dotted, "j"] \arrow[r,"\nu"', swap]&   
 V \arrow[d, "\mathrm{pr}_{\mathbb C^n}", swap]  & Z \arrow[d, "g"]\\
&  U \arrow[r, hookrightarrow] & X
\end{tikzcd}
\]
where $j$ is obtained by the universal property of normalization.
In particular, $j$ is finite and generically $1$-to-$1$ between normal varieties, hence it is an open embedding.
Moreover, if $(u,w)\in V_p^\nu$, we have $\mathrm{pr}_{\mathbb C^n} \circ \nu (u,w)=(u^m,w)$, hence the latter map ramifies at order $m$ along $D$.
It follows that $g$ ramifies at order $m$ along $D$.

Finally, one picks one irreducible component $Y$ of $Z$ and sets $f \defn g|_Y$.
It yields the expected cover, which is Galois with group $G < \Z/n\Z \isom \Gal( Z \to X )$ defined as the stabilizer of $Y$.

As for the last part of the proposition, we can apply the above construction to $\sL = \can X^{[-1]} \defn \can X \dual$.
This provides us with an orbi-\'etale morphism $f \from Y \to X$.
In particular, $Y$~is klt and the computations made above show that $f^* ( K_X + \Delta )$ is trivial over $\reg X \setminus \sing \Delta$.
So we get that $\can Y$ is trivial as well and finally that $Y$ has only canonical singularities.
\end{proof}

\begin{prop}[Existence of strictly adapted covers] \label{ex strictly adapted}
Let $(X, \Delta)$ be a projective pair with standard coefficients such that $K_X + \Delta$ is \Q-Cartier (but not necessarily klt).
Then there exists a very ample divisor $L$ on $X$ such that for general $H \in |L|$, there exists a cyclic Galois cover $f \from Y \to X$ with the following properties:
\begin{enumerate}
\item\label{esa.0} The morphism $f$ is orbi-\'etale for $\big( X, \Delta + (1 - \frac1N) H \big)$, where $N \defn \deg(f)$.
\item\label{esa.1} The morphism $f$ is strictly adapted for $(X, \Delta)$.
\item\label{esa.2} If $(X, \Delta)$ is klt, then so are the pairs $\big( X, \Delta + (1 - \frac1N) H \big)$ and $(Y, \, \emptyset)$.
\end{enumerate}
\end{prop}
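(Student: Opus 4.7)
\emph{Proof plan.}
The plan is to apply \cref{shokurov} to a suitable pair $(X, \Delta')$, where $\Delta' \defn \Delta + (1 - \tfrac{1}{N})H$ for a well-chosen integer $N$ and a general hyperplane section $H$. First, I would pick $N \ge 2$ to be a positive integer such that $m_i \mid N$ for every $i$ and such that $N(K_X + \Delta)$ is Cartier; this is possible thanks to the $\Q$-Cartier assumption on $K_X + \Delta$. Next, I would choose a very ample Cartier divisor $M$ on $X$ sufficiently positive so that $L \defn N(K_X + \Delta) + NM$ is very ample, and let $H \in |L|$ be a general element. By Bertini, $H$ is then smooth, reduced and irreducible, shares no component with $\supp(\Delta)$, and meets $\Delta$ transversally in $\reg X$; this alone will imply that $(X, \Delta + (1 - \tfrac1N)H)$ is klt whenever $(X, \Delta)$ is.

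The heart of the argument is a linear-equivalence computation in $\mathrm{Cl}(X)$. Writing $N\Delta = N(K_X + \Delta) - NK_X$ and using $H \sim N(K_X + \Delta) + NM$, a direct manipulation produces
\[
N\Delta + (N-1)H \;\sim\; N \cdot E, \qquad E \defn N\Delta + (N-1)(K_X + M).
\]
Since $m_i \mid N$, the Weil divisor $E$ has integer coefficients, so $\sL \defn \O X(E)$ is a reflexive rank-one sheaf with $\sL^{[N]} \isom \O X(N \Delta')$. \cref{shokurov} then applies to $(X, \Delta')$ and yields a finite cover $f \from Y \to X$ that is orbi-\'etale for $(X, \Delta')$, which is exactly assertion~\lref{esa.0}. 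To see that $f$ is cyclic Galois of degree exactly $N$ (rather than some proper divisor of~$N$), note that the defining section of $\sL^{[N]}$ has multiplicity $N-1$ along $H$ and $\gcd(N, N-1) = 1$; this forces irreducibility of the cyclic cover built in the proof of \cref{shokurov}, so no component gets discarded at the end.

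Assertion~\lref{esa.1} is then immediate: being orbi-\'etale for $(X, \Delta')$ forces $f^*\Delta_i = m_i \Delta_i'$ with $\Delta_i'$ reduced for each~$i$, which is strict adaptedness for $(X, \Delta)$. For~\lref{esa.2}, klt-ness of $(X, \Delta + (1-\tfrac1N)H)$ is the Bertini observation from the first paragraph, while klt-ness of $(Y, \emptyset)$ follows from the orbi-\'etale identity $K_Y = f^*(K_X + \Delta')$ combined with the preservation of klt singularities under orbi-\'etale covers. The only step that is not formal bookkeeping is the construction of $\sL$: one must absorb the non-$\Q$-Cartier contribution of $K_X$ hidden inside $N\Delta$ into the Weil class $[E]$, and this is possible precisely because we are assuming $K_X + \Delta$, rather than each individual $\Delta_i$, to be $\Q$-Cartier, which is the whole point of the improvement advertised just before the statement.
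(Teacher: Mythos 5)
Your proposal is correct and takes essentially the same route as the paper: a degree-$N$ cyclic cover ramified to order $m_i$ along $\Delta_i$ and to order $N$ along a general member $H$ of $|N(K_X+\Delta)+N\cdot(\text{ample})|$, with klt-ness of $\big(X,\Delta+(1-\frac1N)H\big)$ coming from the generality of $H$ and klt-ness of $(Y,\emptyset)$ from \cite[Prop.~5.20]{KM}; the only differences are presentational, namely that the paper extracts the root directly from the principal divisor $H-L\sim 0$ following Shokurov, whereas you route the construction through \cref{shokurov} with $\sL=\O X\big(N\Delta+(N-1)(K_X+M)\big)$ and add the (correct) coprimality observation $\gcd(N-1,N)=1$ ensuring the cover is irreducible of degree exactly $N$, a point the paper leaves implicit. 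One minor caveat: the klt assertion for $\big(X,\Delta+(1-\frac1N)H\big)$ is best justified, as in the paper, by noting that a log resolution of $(X,\Delta)$ also resolves the augmented pair because $\pi^*H=\pi^{-1}_*H$ is general in the basepoint-free system $\pi^*|L|$, rather than by transversality of $H$ and $\Delta$ inside $\reg X$ alone.
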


\begin{proof}
Pick, once and for all, a representative $K$ of $K_X$, that is, an integral (but not necessarily effective) Weil divisor $K$ on $X$ such that $K_X \sim K$.
Choose a very ample divisor $A$ on $X$ and a positive integer $N$ such that
\[ L \defn N \cdot \big( A - ( K + \Delta ) \big) \]
is integral and very ample, and pick a general element $H \in |L|$.
Consider the principal divisor
\[ D \defn H - L = H + N \cdot ( K + \Delta - A ) \sim 0. \]
Let $f \from Y \to X$ be the degree $N$ cyclic cover associated to $D$, as in~\cite[\S 2.3]{Shokurov92}.
(To be more precise, $Y$ is an arbitrary irreducible component of the normalization of that cover.)
We need to check properties~\lref{esa.0}--\lref{esa.2}.

By construction, the branch locus of $f$ is contained in $\supp(D)$.
Recall from~\cite{Shokurov92} that writing $D = \sum_i d_i D_i$, the ramification order of $f$ along each component of $f\inv(D_i)$ is given by $N / \operatorname{hcf}(d_i, N)$.
Since $K$, $A$ and $H$ are \Z-divisors, where $H$ is even reduced, this implies~\lref{esa.0}.
Property~\lref{esa.1} is an immediate consequence.

For~\lref{esa.2}, it is enough to show the first claim thanks to~\lref{esa.0} and~\cite[Prop.~5.20]{KM}.
To check the claim, we take a log resolution $\pi \from \wt X \to X$ of $(X, \Delta)$ and write
\[ K_{\wt X} + \Delta' = \pi^*( K_X + \Delta ) + \sum a_i E_i \]
as usual, where $\Delta'$ is the strict transform of $\Delta$.
Since $H$ is a general element of $|L|$, and $\pi^* |L|$ is basepoint-free, one can assume that $\pi^* H = \pi^{-1}_* H$ is smooth and intersects each stratum of the exceptional divisor of $\pi$ and of $\Delta'$ smoothly.
In particular, $\pi$ is also a log resolution for the pair $\big( X, \Delta + ( 1 - \frac 1 N ) H \big)$.
Now, the identity
\[ K_{\wt X} + \Delta' + \left( 1 - \frac 1 N \right) \pi^{-1}_* H = \pi^* \left( K_X + \Delta + \left( 1 - \frac 1 N \right) H \right) + \sum a_i E_i \]
shows that $\big( X, \Delta + ( 1 - \frac 1 N ) H \big)$ is klt.
\end{proof}

\begin{rem-plain}
More generally, it can be observed that a pair $(X, \Delta)$ (with $X$ a normal analytic space) admits strictly adapted covers if there exists a Cartier divisor $D$ on $X$ having no component in common with $\Delta$ and such that $m(K_X + \Delta) \sim D$ for some (sufficiently divisibe) integer $m \ge 1$.
We can indeed apply \cref{shokurov} to the pair $(X \setminus D, \Delta|_{X \setminus D})$ and get an orbi-\'etale cover $Y^\circ\to X \setminus D$.
Its completion over $X$ is then adapted with respect to $\Delta$ and the extra-ramification is supported over the components of $D$.
\end{rem-plain}

The following result seems to have been known to experts for a long time.
A proof of it was written down in~\cite{GKKP} in the case where $\Delta = 0$, and the general case follows almost immediately from \cref{shokurov} as we will explain.

\begin{lem}[Klt pairs have quotient singularities in codimension two] \label{klt orb}
Let $(X, \Delta)$ be a klt pair with standard coefficients.
Then there is a Zariski closed subset $Z \subset \sing X \cup \supp \Delta$ with $\codim XZ \ge 3$ such that for $X^\circ \defn X \setminus Z$, the pair $(X^\circ, \Delta|_{X^\circ})$ admits a smooth orbi-\'etale orbi-structure $\cC^\circ$.
\end{lem}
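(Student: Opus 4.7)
The plan is to reduce the general case to the known case $\Delta = 0$ (treated in~\cite{GKKP}) by using \cref{shokurov} to locally eliminate the pair structure; this is the strategy suggested in the paragraph preceding the lemma.

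Since $(X, \Delta)$ is klt, the divisor $K_X + \Delta$ is $\Q$-Cartier, so every point $x \in X$ admits an open neighborhood $U \ni x$ and an integer $N \ge 1$ satisfying $\omega_U^{[N]}(N \Delta|_U) \isom \O U$. Applying the second part of \cref{shokurov} to $(U, \Delta|_U)$ produces an orbi-\'etale Galois cover $f \from Y \to U$ with $Y$ canonical (hence klt) and $\omega_Y \isom \O Y$. The main result of~\cite{GKKP}, applied to the klt variety $Y$, then provides a closed subset $Z_Y \subset \sing Y$ with $\codim{Y}{Z_Y} \ge 3$ such that $Y \setminus Z_Y$ has quotient singularities: each $y \in Y \setminus Z_Y$ admits a smooth quasi-\'etale Galois neighborhood $g \from \tilde V \to V \subset Y$. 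Since $f$ is finite, $Z_U \defn f(Z_Y)$ satisfies $\codim{U}{Z_U} \ge 3$; taking a locally finite cover of $X$ by such $U$'s and letting $Z$ be the union of the resulting $Z_U$'s produces the required closed subset of $\sing X \cup \supp \Delta$ with $\codim{X}{Z} \ge 3$.

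On $X^\circ \defn X \setminus Z$ a smooth orbi-\'etale orbi-structure is built chart by chart. For $x \in U \setminus Z_U$, each preimage $y \in f^{-1}(x)$ lies in the quotient locus $Y \setminus Z_Y$ and so has a smooth cover $g$ near it; the composite $\tilde V \to V \to U$, passed through a Galois closure over a small \'etale neighborhood $U' \ni x$, is then a finite Galois cover $W \to U'$. This cover is orbi-\'etale for $(X, \Delta)$: its codimension-one ramification is inherited entirely from $f$ and has order exactly $m_i$ over each $\Delta_i$, while $g$ and the Galois closure contribute only codimension-$\ge 2$ branching. Compatibility of two such charts in the sense of \cref{orbi structures} is automatic, since their normalized fiber product is quasi-\'etale over each factor: both charts share the codimension-one ramification pattern dictated by the standard coefficients of $\Delta$.

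The main technical point is the composition-plus-Galois-closure step of the previous paragraph: one must check both that no additional codimension-one branching appears (which is formal, from the quasi-\'etaleness of $g$) and, more delicately, that smoothness is preserved. The latter is ensured by working analytically near each $y \notin Z_Y$, where $Y$ is the quotient of a smooth variety by a finite group; combining this finite group with the Galois group of $f$ realizes the desired cover of $U$ as a smooth quotient by a single finite group, which then serves as a chart of the orbi-structure.
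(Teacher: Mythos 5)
Your proposal is correct and takes essentially the same route as the paper: cover $X$ by open sets on which $K_X+\Delta$ is $\Q$-trivial, use \cref{shokurov} to produce local orbi-\'etale covers eliminating $\Delta$, apply the codimension-two quotient-singularity result for klt spaces from~\cite{GKKP} upstairs, and transport the smooth quasi-\'etale charts back down to get the orbi-structure on $X\setminus Z$. Your additional Galois-closure step (with smoothness best justified by noting that the closure is quasi-\'etale, hence \'etale by purity, over the smooth local cover, cf.~\cref{gal closure}) is only a minor technical refinement of the paper's direct composition of charts.
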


\begin{proof}
Since $K_X + \Delta$ is a \Q-Cartier divisor, we can cover $X$ by (affine or Stein) open subsets $U_\beta \subset X$, $\beta \in I$, such that $( K_X + \Delta )|_{U_\beta} \sim_\Q 0$.
By~\cref{shokurov}, we can find a finite cyclic cover $g_\beta \from U_\beta' \to U_\beta$ that branches exactly over the $\Delta_i|_{U_\beta}$ with multiplicity $m_i$. 
Moreover, $U_\beta'$ has klt singularities, since $K_{U_\beta'} = g_\beta^* ( K_{U_\beta} + \Delta|_{U_\beta} )$.
We can now use~\cite[Prop.~9.3]{GKKP} or~\cite[Lemma~5.8]{GK20} to find a \emph{smooth} orbi-\'etale orbi-structure $\{U'_\bg, f_\bg, X'_\bg\}_{\gamma \in J}$ on $U_\beta' \setminus Z_\beta$, for some closed subset $Z_\beta \subset U_\beta'$ of codimension at least three.
Set $U_\bg = g_\beta(U_\bg')$, so that $\bigcup_\beta U_{\bg} \subset U_\beta$ is an open subset whose complement is of codimension at least three. In summary, we get the following diagram:
\begin{equation} \label{cover}
\begin{tikzcd}
X'_\bg \arrow[r, "f_\bg"] \arrow[rr, "h_\bg", bend left]&   U_\bg' \arrow[r, "g_\beta"]  \arrow[d, hookrightarrow] &U_\bg \arrow[d, hookrightarrow] \\
& U_\beta' \arrow[r, "g_\beta"] & U_\beta \arrow[r, hookrightarrow] & X
\end{tikzcd}
\end{equation}
Now $\set{ U_\bg, h_\bg, X'_\bg }_{(\beta, \gamma) \in I \times J}$ is the sought-after smooth orbi-\'etale orbi-structure on $(X^\circ, \Delta|_{X^\circ})$, where the open subset $X^\circ \defn \bigcup_{(\beta, \gamma) \in I \times J} U_{\beta\gamma}$ has complement of codimension at least three.
\end{proof}

\begin{rem} \label{quot sing surf}
In particular, a klt surface pair with standard coefficients admits a smooth orbi-\'etale orbi-structure, hence it has quotient singularities in the sense of~\cref{quot sing}.
This is of course well-known and follows from the cyclic cover construction recalled above and~\cite[Proposition~4.18]{KM}.
\end{rem}

\begin{defi}[Orbi-sheaves]
An \emph{orbi-sheaf} with respect to an orbi-structure $\cC = \big\{ ( U_\alpha, f_\alpha, X_\alpha ) \big\}_{\alpha \in J}$ on $(X, \Delta)$ is the datum of a collection $(\sE_\alpha)_{\alpha \in J}$ of coherent sheaves on each $X_\alpha$, together with isomorphisms $g_{\alpha \beta}^* \sE_\alpha \isom g_{\beta \alpha}^* \sE_\beta$ of $\O{X_{\alpha \beta}}$-modules satisfying the natural compatibility conditions on triple overlaps.
\end{defi}

All the usual notions for sheaves (locally free, reflexive, subsheaves, morphisms etc.) can be carried over to this setting in the obvious way, cf.~\cite[\S 2.7]{GT16}.
Ditto for Higgs fields and Higgs sheaves, cf.~\cite[Definition~2.24]{GT16}.

Recall the following definition from~\cite[Sec.~3]{ClaudonKebekusTaji21}:

\begin{defi}[Adapted differentials] \label{adapt diff}
Let $\gamma \from Y \to X$ be a strictly adapted morphism for $(X, \Delta)$.
Let $X^\circ \subset X$ and $\iota \from Y^\circ \inj Y$ be the maximal open subsets where $\gamma$ is \emph{good} in the sense of~\cite[Def.~3.5]{ClaudonKebekusTaji21}.
The sheaf of \emph{adapted reflexive differentials} is defined as
\[ \Omegar{(X, \Delta, \gamma)}1 \defn \iota_* \bigg[ \Big( \img \!\big( \gamma^* \Omegap{X^\circ}1 \to \Omegap{Y^\circ}1 \big) \tensor \O{Y^\circ}(\gamma^* \Delta) \Big) \cap \Omegap{Y^\circ}1 \bigg]. \]
\end{defi}

\begin{lem} \label{adapt diff lem}
The following properties hold:
\begin{enumerate}
\item\label{adl.1} The sheaf $\Omegar{(X, \Delta, \gamma)}1$ is a coherent reflexive subsheaf of $\Omegar Y1$.
\item\label{adl.2} If $\gamma$ is orbi-\'etale for $(X, \Delta)$, then $\Omegar{(X, \Delta, \gamma)}1 = \Omegar Y1$.
\item\label{adl.3} Let $\gamma_2 \from Z \to Y$ be quasi-\'etale, where $Z$ is normal.
Then $\delta \defn \gamma \circ \gamma_2 \from Z \to X$ is strictly adapted for $(X, \Delta)$, and $\Omegar{(X, \Delta, \delta)}1 = \gamma_2^{[*]} \Omegar{(X, \Delta, \gamma)}1$. \qed
\end{enumerate}
\end{lem}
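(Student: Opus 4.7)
The plan is to work \'etale-locally on $X$ and $Y$, using that by definition of ``good'' \cite[Def.~3.5]{ClaudonKebekusTaji21} the complements $X \setminus X^\circ$ and $Y \setminus Y^\circ$ have codimension at least two in their ambient normal spaces, so that the three claims can be verified on these open subsets and extended by $\iota_*$.

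For part~\lref{adl.1}, the sheaf under $\iota_*$ is tautologically a coherent subsheaf of the locally free sheaf $\Omegap{Y^\circ}1$, being the intersection of two coherent subsheaves inside $\Omegap{Y^\circ}1 \tensor \O{Y^\circ}(\gamma^*\Delta)$. Its pushforward under $\iota$ is therefore contained in $\iota_* \Omegap{Y^\circ}1 = \Omegar Y1$, and coherence together with reflexivity then follow from the standard fact that $\iota_*$ applied to a torsion-free (resp.\ reflexive) sheaf on the complement of a codimension~$\geq 2$ closed subset of a normal space returns a coherent reflexive sheaf.

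For part~\lref{adl.2}, the strategy is an \'etale-local computation at each point of $Y^\circ$. Over the locus where $\gamma$ is \'etale, which under the orbi-\'etale hypothesis is precisely the complement of $\supp(\Delta)$, the image $\gamma^* \Omegap{X^\circ}1 \to \Omegap{Y^\circ}1$ is already surjective so the claim is trivial. Near a general point of $\bigcup_i \Delta_i$, the orbi-\'etale hypothesis allows us to reduce, up to \'etale base change, to the standard model $X = \bD^n$, $\Delta = \sum_{i=1}^r \big(1-\tfrac 1{m_i}\big)\{z_i = 0\}$ and $Y = \bD^n$ with $\gamma(u) = (u_1^{m_1},\dots,u_r^{m_r},u_{r+1},\dots,u_n)$. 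Then $\gamma^*\mathrm{d} z_i = m_i u_i^{m_i-1}\mathrm{d}u_i$ while $\gamma^*\Delta = \sum_i(m_i-1)\{u_i = 0\}$: tensoring by $\O{}(\gamma^*\Delta)$ exactly cancels the factor $u_i^{m_i-1}$, so that $\mathrm{d}u_i$ itself belongs to the image and the intersection with $\Omegap{Y^\circ}1$ is all of $\Omegap{Y^\circ}1$.

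For part~\lref{adl.3}, I first verify strict adaptation of $\delta$: writing $\gamma^*\Delta_i = m_i\Delta_i'$ with $\Delta_i'$ reduced, one has $\delta^*\Delta_i = m_i \gamma_2^{*}\Delta_i'$, and $\gamma_2^{*}\Delta_i'$ is reduced since $\gamma_2$ is unramified in codimension one. For the sheaf equality, I set $Z^\circ \defn \gamma_2^{-1}(Y^\circ)$ minus the (codimension~$\geq 2$) branch locus of $\gamma_2$; on $Z^\circ$ the map $\gamma_2$ is \'etale, hence flat, so $\gamma_2^*$ commutes with formation of images, with tensor products, and with intersections of subsheaves of a locally free sheaf. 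Applying this to the defining expression of $\Omegar{(X,\Delta,\gamma)}1$ on $Y^\circ$ yields the defining expression of $\Omegar{(X,\Delta,\delta)}1$ on $Z^\circ$, and the identity extends to $Z$ by pushforward, which matches the definition of the reflexive pullback $\gamma_2^{[*]}$. The step I expect to require the most care is precisely this last compatibility: confirming that the good open for $\delta$ indeed has codimension~$\geq 2$ complement in $Z$, and that passing to the reflexive hull commutes with $\iota_*$ across the branch locus of $\gamma_2$; both reduce cleanly to the quasi-\'etaleness of $\gamma_2$ and to standard properties of pushforward on normal spaces.
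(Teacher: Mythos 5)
The paper supplies no argument of its own for this lemma: it is recorded with a qed sign because the stated properties are regarded as direct consequences of the construction of the good locus in \cite[Sec.~3]{ClaudonKebekusTaji21}, so your proposal has to be judged on its own merits. Your overall strategy --- explicit local computations on the good locus, extension across the codimension-two complement via $\iota_*$, and flat (\'etale) pullback for~\lref{adl.3} --- is the right one, and~\lref{adl.3} is essentially correct as sketched (modulo granting~\lref{adl.1}).

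The genuine gap is the reflexivity claim in~\lref{adl.1}, and it propagates to~\lref{adl.2}. The ``standard fact'' you invoke is false as stated: for a merely torsion-free coherent sheaf $\sF$ on the complement of a codimension-two subset, $\iota_*\sF$ is torsion-free but need not be reflexive (the ideal sheaf of a point of $Y^\circ$ is already a counterexample), and since $(\iota_*\sF)|_{Y^\circ}=\sF$, the pushforward is reflexive \emph{only if} $\sF$ is reflexive on $Y^\circ$. So you must actually prove that $\bigl(\img(\gamma^*\Omegap{X^\circ}1\to\Omegap{Y^\circ}1)\tensor\O{Y^\circ}(\gamma^*\Delta)\bigr)\cap\Omegap{Y^\circ}1$ is reflexive on $Y^\circ$; being the intersection of an image sheaf twisted by a line bundle (not obviously reflexive) with $\Omegap{Y^\circ}1$, this is not formal. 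The missing input is exactly what ``good'' is designed to deliver: at \emph{every} point of $Y^\circ$ --- not only at general points of the $\Delta_i$, but also over the snc crossings of $\Delta$ and over the extra ramification divisors that a strictly adapted, non-orbi-\'etale $\gamma$ possesses --- the cover has the diagonal normal form $(u_1,\dots,u_n)\mapsto(u_1^{k_1},\dots,u_r^{k_r},u_{r+1},\dots,u_n)$ in suitable coordinates (over crossings this uses smoothness of $Y^\circ$, e.g.\ via the toric classification of normal covers \'etale over $(\bD^*)^2$), and the defining expression is then the locally free subsheaf of $\Omegap{Y^\circ}1$ generated by monomial multiples of the $\dd u_i$, the twist by $\gamma^*\Delta$ cancelling the ramification exactly along the components lying over $\supp\Delta$. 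This yields reflexivity, hence~\lref{adl.1}; and it is also what~\lref{adl.2} needs, because your check at general points of $\Delta$ only gives the equality off a codimension-two subset of $Y^\circ$, and upgrading it to an equality on all of $Y^\circ$ (which taking $\iota_*$ of both sides requires) uses precisely the reflexivity, or local freeness, that you have not established. The two gaps are thus one and the same, and both are closed by carrying out your local-model computation at all points of the good locus rather than only at general points of $\Delta$.
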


\begin{defi}[Orbifold cotangent sheaf, cf.~\protect{\cite[Def.~2.23]{GT16}}] \label{orbi cotang}
Consider on $(X, \Delta)$ any strictly adapted orbi-structure $\cC = \big\{ ( U_\alpha, f_\alpha, X_\alpha ) \big\}_{\alpha \in J}$.
Then the sheaves \[\big( \Omegar{(X, \Delta, f_\alpha)}1 \big)_{\alpha \in J}\] induce a reflexive orbi-sheaf called the \emph{orbifold cotangent sheaf}, or \emph{sheaf of reflexive differential forms}, which we denote by $\Omegar\cC1$.
If the orbi-structure $\cC$ is smooth and orbi-\'etale, then $\Omegar\cC1$ is locally free.
Changing the (strictly adapted) orbifold structure yields compatible sheaves in the sense of~\cite[Def.~3.2]{GT16}, hence we will often denote this sheaf by $\Omegar{(X, \Delta)}1$.

The same construction can be carried out for any integer $p \ge 0$, yielding orbi-sheaves~$\Omegar{(X, \Delta)}p$.
For $p = 0$, we obtain the \emph{structure sheaf} $\O{(X, \Delta)}$, which is nothing but $\O{X_\alpha}$ in each chart $f_\alpha$.
\end{defi}

\begin{lem} \label{surface}
Let $(X, \Delta)$ be a projective klt pair with standard coefficients, and let $X^\circ$ be endowed with a smooth orbi-étale orbi-structure $\cC$ as in \cref{klt orb}.
Let $H$ be an ample line bundle on $X$ and pick a complete intersection surface
\[ S = D_1 \cap \cdots \cap D_{n - 2} \]
of $n - 2$ general hypersurfaces $D_i \in |mH|$ for $m \gg 1$.
Then $S \subset X^\circ$ and the restriction of $\cC$ to $(S, \Delta|_S)$ induces a smooth orbi-étale orbi-structure on $(S, \Delta|_S)$.
In particular, $(S, \Delta|_S)$ has quotient singularities.
\end{lem}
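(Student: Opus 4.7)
The plan is to restrict the orbi-\'etale orbi-structure $\cC$ on $X^\circ$ directly to the surface $S$, combining a codimension estimate with a Bertini-type transversality argument carried out simultaneously on $X$ and on finitely many charts $X_\alpha$. By \cref{klt orb}, the closed set $Z \defn X \setminus X^\circ$ satisfies $\codim XZ \ge 3$, hence $\dim Z \le n - 3$. For $m \gg 1$, $|mH|$ is very ample (in particular base-point-free) on $X$, and the standard dimension count shows that for a general $(n-2)$-tuple $(D_1, \dots, D_{n-2}) \in |mH|^{n-2}$, the complete intersection $S$ is a smooth projective surface, disjoint from $Z$, and transverse to every stratum of $\supp \Delta$ in the smooth locus of $X$. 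This already delivers $S \subset X^\circ$.

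Next, using that $S$ is compact, I would select a finite subfamily $(U_{\alpha_k}, f_{\alpha_k}, X_{\alpha_k})_{k=1,\dots,N}$ of $\cC$ whose first components cover $S$ and define $U_{\alpha_k}^S \defn S \cap U_{\alpha_k}$, $X_{\alpha_k}^S \defn f_{\alpha_k}^{-1}(U_{\alpha_k}^S)$, and $f_{\alpha_k}^S \defn f_{\alpha_k}|_{X_{\alpha_k}^S}$. Pulling back via the finite morphism $f_{\alpha_k}$ yields a base-point-free linear subsystem of $|f_{\alpha_k}^{*}(mH|_{U_{\alpha_k}})|$ on the smooth space $X_{\alpha_k}$. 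Applying Bertini on each $X_{\alpha_k}$, and further requiring that the pulled-back $D_i$ meet each smooth component of the reduced preimage of $\supp \Delta$ transversely, imposes only finitely many open dense conditions on $|mH|^{n-2}$. For a suitable generic tuple, every $X_{\alpha_k}^S$ is a smooth $2$-dimensional complex manifold cut out in $X_{\alpha_k}$ by a transverse complete intersection.

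Finally I would verify that $\cC^S \defn \big\{(U_{\alpha_k}^S, f_{\alpha_k}^S, X_{\alpha_k}^S)\big\}_k$ is a smooth orbi-\'etale orbi-structure for $(S, \Delta|_S)$. Each $f_{\alpha_k}^S$ remains a finite Galois morphism (its Galois group is the same as that of $f_{\alpha_k}$, acting on the invariant preimage $X_{\alpha_k}^S$), and by transversality of $S$ with $\Delta_i$, the intersection $\Delta_i \cap U_{\alpha_k}^S$ is a smooth curve (or empty) along which $f_{\alpha_k}^S$ ramifies with the same multiplicity $m_i$ as $f_{\alpha_k}$---exactly the orbi-\'etale condition for the restricted pair. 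The compatibility of the restricted triples is inherited from $\cC$, since $X_{\alpha_k}^S \times_S X_{\alpha_\ell}^S$ is the preimage of $S$ in $X_{\alpha_k} \times_X X_{\alpha_\ell}$ and quasi-\'etaleness is preserved under base change. The ``in particular'' assertion then follows from \cref{quot sing}.

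The only care-demanding point is the simultaneous generic-position step: one needs all the open dense conditions above (smoothness of every $X_{\alpha_k}^S$, transversality with each stratum of $\supp \Delta$ and avoidance of $Z$) to be realised by a single tuple $(D_1, \dots, D_{n-2})$. This works only because $S$ is compact and the chart collection can thus be taken finite; it is essentially a bookkeeping matter, but it is the central technical input of the proof.
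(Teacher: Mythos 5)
Your proposal is correct and follows the same overall strategy as the paper: restrict the charts of $\cC$ to $S$, use $\codim XZ \ge 3$ to get $S \subset X^\circ$, and reduce everything to showing that the chart preimages $T_\alpha = f_\alpha^{-1}(S \cap U_\alpha)$ are smooth, which is a Bertini statement. Where you genuinely diverge is in how Bertini is invoked. The composite $X_\alpha \to U_\alpha \to X$ is only quasi-finite, and the paper's device is to extend it to a finite cover $\overline{f_\alpha} \from \overline{X_\alpha} \to X$ with $X_\alpha \subset \overline{X_\alpha}^{\mathrm{reg}}$; then $\overline{f_\alpha}^*|mH|$ is a basepoint-free system on the \emph{projective} variety $\overline{X_\alpha}$, classical Bertini applies, and the bad locus in $|mH|^{n-2}$ is automatically a proper closed subset. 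You instead apply Bertini directly on the non-compact smooth charts $X_{\alpha_k}$ and use compactness of $S$ to reduce to finitely many charts, so that one general tuple meets all requirements simultaneously (a reduction the paper leaves implicit, since the covering of the projective $X$ can be taken finite). This route works, but you should say which Bertini you mean: the characteristic-zero version for a morphism from a smooth, not necessarily proper, variety to $\PP^N$ (generic smoothness of the projection from the incidence variety), or, if the charts are taken only analytically, a Sard-type argument --- in which case ``general'' a priori only means ``outside a measure-zero set'', which is precisely the subtlety the paper's compactification step removes. Two inessential slips to correct: a general complete intersection surface $S$ is \emph{not} smooth --- it meets the codimension-two singular locus of $X$ and is singular there (this is exactly why one needs an orbi-structure on $S$ at all); you never use this claim, so nothing breaks. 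And quasi-\'etaleness is not preserved by arbitrary base change; the compatibility of the restricted charts is fine because, the orbi-structure being smooth, the gluing maps $g_{\alpha\beta}$ are \'etale by purity of the branch locus, and \'etaleness does base change.
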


\begin{proof}
We have $S\subset X^\circ$ for dimensional and genericity reasons.
Next, if we express the structure $\cC$ as $\cC = \big\{ ( X_\alpha, f_\alpha, U_\alpha ) \big\}$, set $S_\alpha \defn S \cap U_\alpha$, $T_\alpha \defn f_\alpha^{-1}(S_\alpha)$, $g_\alpha \defn f_\alpha|_{T_\alpha}$, and define $\cC|_S \defn \big\{ ( T_\alpha, g_\alpha, S_\alpha ) \big\}$.
We claim that $T_\alpha$ is smooth, which would prove the lemma.
Indeed, since $f_\alpha$ is quasi-finite (as the composition of an étale map with a finite map), one can find an open immersion $X_\alpha \inj \overline{X_\alpha}$ and a finite extension $\overline{f_\alpha} \from \overline{X_\alpha} \to X$ of $f_\alpha$ as follows:
\[
\begin{tikzcd}
T_\alpha \arrow[r, hookrightarrow] \arrow[d, "g_\alpha"] &   X_\alpha \arrow[d,"f_\alpha"]  \arrow[r,hookrightarrow] & \overline{X_\alpha} \arrow[d,"\overline{f_\alpha}"] \\
 S_\alpha   \arrow[r, hookrightarrow]& U_\alpha \arrow[r,hookrightarrow] & X
\end{tikzcd}
\]
Since $\overline{f_\alpha}^* |mH|$ is basepoint-free, Bertini's theorem guarantees that if $\overline{T_\alpha}$ is a general intersection of $(n - 2)$ hypersurfaces in $\overline{f_\alpha}^* |mH|$, then $\overline{T_\alpha} \cap \overline{X_\alpha}^{\mathrm{reg}}$ is smooth.
Since $X_\alpha \subset \overline{X_\alpha}^{\mathrm{reg}}$, this shows that $T_\alpha$ is smooth, hence the lemma.
\end{proof}

\subsection{The orbifold fundamental group}

Let $(X, \Delta)$ be a klt pair with standard coefficients as before, and set $X^* \defn \reg X \setminus \supp \Delta$.

\begin{defi}[Fundamental group] \label{def pi1orb}
The \emph{(orbifold) fundamental group} of $(X, \Delta)$ is defined as
\[ \piorb{X, \Delta} \defn \factor{\pi_1(X^*)}{\la\!\la \gamma_i^{m_i}, \, i \in I \, \ra\!\ra}. \]
Here, for each $i \in I$, the element $\gamma_i$ is a ``loop around $\Delta_i$'', i.e.~a loop in the normal circle bundle of $\reg{(\Delta_i)} \cap \reg X \subset \reg X$, and $\la\!\la \cdots \ra\!\ra$ denotes the normal subgroup generated by a given subset.
\end{defi}

Note that if $D = \emptyset$, then $\piorb{X, \emptyset} = \pi_1(\reg X)$ is in general different from~$\pi_1(X)$.

\begin{defi}[Covers branched at $\Delta$, cf.~\protect{\cite[Def.~1.3]{Claudon08}}] \label{orbifold cover}
A cover of $X$ \emph{branched at most at $\Delta$} is a holomorphic map $\pi \from Y \to X$, where:
\begin{enumerate}
\item $Y$ is a normal connected complex space (not necessarily quasi-projective),
\item $\pi$ has discrete fibres and $\pi\inv(X^*) \to X^*$ is \'etale,
\item\label{709} at each irreducible component $\wt\Delta_{j, k} \subset \pi\inv(\Delta_j)$, the ramification index $r_{j, k}$ of $\pi$ divides $m_j$,
\item\label{743} every $x \in X$ has a connected neighborhood $V \subset X$ such that every connected component $U$ of $\pi\inv(V)$ meets the fibre $\pi\inv(x)$ in only one point, and $\pi|_U \from U \to V$ is finite.
\end{enumerate}
We say that $\pi$ is \emph{branched exactly at $\Delta$} if in~\lref{709}, we have $r_{j, k} = m_j$ for all $j, k$.
\end{defi}

Note that if $Y$ is quasi-projective and $\pi$ is Galois, then saying that $\pi$ is branched exactly at $\Delta$ is the same as saying that $\pi$ is orbi-\'etale.

\begin{thm}[Covers and the fundamental group] \label{orbifold cover pi1}
There exists a natural one-to-one correspondence between subgroups $G \subset \piorb{X, \Delta}$ and covers $\pi \from Y \to X$ branched at most at $\Delta$.
Furthermore:
\begin{enumerate}
\item\label{ocp.1} $G$ is of finite index if and only if $\pi$ is finite.
\item\label{ocp.2} $G$ is a normal subgroup if and only if $\pi$ is Galois.
\item\label{ocp.3} Let $Y_{1, 2} \to X$ be two covers branched at most at $\Delta$, with corresponding subgroups $G_{1, 2} \subset \piorb{X, \Delta}$.
Then there is a factorization
\[ \begin{tikzcd}
 & & Y_2 \arrow[d] \\
Y_1 \arrow[rr] \arrow[urr, dashed, "\exists"] & & X
\end{tikzcd} \]
if and only if $G_1 \subset G_2$.
\end{enumerate}
\end{thm}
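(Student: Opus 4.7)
The plan is to establish the claimed bijection by combining the classical Galois correspondence of covering space theory on $X^* \defn \reg X \setminus \supp \Delta$ with analytic extension theorems that let one pass between branched covers of $X$ and étale covers of $X^*$ with controlled local monodromy. The first step is to set up an auxiliary bijection
\[
\{ \pi \from Y \to X \text{ branched at most at } \Delta \} \longleftrightarrow \{ \text{connected étale covers } Y^* \to X^* \text{ with } \gamma_j^{m_j} \text{ trivial on fibres} \}
\]
via $Y \mapsto \pi\inv(X^*)$. The forward direction is immediate from condition~(\thethm.3) of \cref{orbifold cover}. For the converse, I would extend $Y^*$ to $X$ in two stages: (a)~near a smooth point of some $\Delta_j$ lying away from $\sing X$ and away from other components of $\supp \Delta$, a polydisc chart $V \isom \bD^n$ reduces the problem to extending a finite cyclic cover of the form $z_1 \mapsto z_1^k$ (with $k \mid m_j$) of $\bD^* \times \bD^{n-1}$, which extends canonically across $\{z_1 = 0\}$; (b)~after stage~(a) the cover is defined on $X \setminus Z$ for some closed analytic subset $Z$ of codimension at least $2$, and condition~(\thethm.4) of \cref{orbifold cover} guarantees that on a small Stein neighborhood $V$ of any point of $Z$, each connected component of the partial cover restricts to a finite étale cover of $V \setminus Z$. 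Invoking \cite[Thm.~3.4]{DethloffGrauert94} (exactly as in the proof of \cref{shokurov}) extends each such component to a finite normal cover of $V$, and these local extensions glue globally by the uniqueness of normal analytic completions.

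Second, applying the classical Galois correspondence for connected, locally path-connected, semi-locally simply connected topological spaces to $X^*$, I would match connected étale covers $Y^* \to X^*$ with conjugacy classes of subgroups $G' \subset \pi_1(X^*)$. The condition ``$\gamma_j^{m_j}$ acts trivially on the fibres'' translates precisely into $\gamma_j^{m_j} \in G'$ for all $j$, that is, $G'$ is the preimage of a subgroup $G \subset \piorb{X, \Delta}$ under the quotient map. Composing with the first step yields the desired bijection. The three further items then follow essentially formally: finite index of $G$ corresponds to finitely many sheets above $X^*$, hence to finiteness of the extended map $\pi$ by property~(\thethm.4); normality of $G$ corresponds to transitivity of the deck-transformation action of $\pi_1(X^*)/G'$ on fibres, which lifts uniquely to a deck action on the analytic extension $Y$ by normality of $Y$, so $\pi$ is Galois; and the factorization property for $G_1 \subset G_2$ holds classically on $X^*$ and propagates to the branched covers by uniqueness of the extension procedure.

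The main obstacle is stage~(b) of the extension argument, where one must ensure that each connected component of the partial cover is genuinely finite over small analytic neighborhoods of points of $Z$, for otherwise \cite[Thm.~3.4]{DethloffGrauert94} does not apply. This is precisely the role played by the technical condition~(\thethm.4) of \cref{orbifold cover}: it is designed to encode the local finiteness one needs, both to make sense of the object $Y$ as an honest analytic branched covering and to permit the Dethloff--Grauert extension. Verifying that this condition is consistently preserved by the constructions in stages~(a) and~(b) --- and ultimately witnesses the sufficiency of the abstract subgroup $G \subset \piorb{X, \Delta}$ for producing a bona fide branched cover of $X$ --- is the technical core of the argument.
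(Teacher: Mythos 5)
Your overall route---reduce to \'etale covers of $X^* = \reg X \setminus \supp\Delta$ via the classical Galois correspondence, handle the divisorial ramification by the explicit cyclic local model, and complete the cover over the remaining codimension~$\ge 2$ locus using \cite[Thm.~3.4]{DethloffGrauert94}---is the same as the paper's, which transposes the snc-case proof of \cite[Thm.~1.1]{Claudon08}. However, there is a genuine gap precisely at the step you yourself single out as the technical core, namely stage~(b). In the direction ``subgroup $G \subset \piorb{X, \Delta}$ $\Rightarrow$ branched cover of $X$'', you justify the finiteness of each connected component of the partial cover over a small neighborhood $V$ of a point of $Z$ by appealing to condition~\lref{743} of \cref{orbifold cover}. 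That is circular: condition~\lref{743} is part of the definition of the object $Y$ you are trying to construct, not a property you may assume for the \'etale cover of $X^*$ determined by the abstract subgroup $G$. What one actually has to prove is that the image in $\piorb{X, \Delta}$ of the local orbifold fundamental group of $(V, \Delta|_V)$ is finite, since the number of sheets of a component of the preimage of $V \cap X^*$ is the index of (a conjugate of) $G$ intersected with that image inside it; only then is the Dethloff--Grauert extension applicable and only then does the completed cover satisfy~\lref{743}.

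This local finiteness is exactly where the klt hypothesis enters, and your argument never uses klt-ness anywhere: the missing ingredient is Braun's theorem \cite[Thm.~1]{Braun21} that local (orbifold) fundamental groups of klt singularities are finite, extended to the complex-analytic category via \cite[Thm.~1.7]{Fujino22} and \cite[Rem.~6.10]{CGGN}, which is the one point the paper's proof explicitly adds to the snc-case argument. That this input cannot be dispensed with is shown by a cone over an elliptic curve (log canonical, not klt): the fundamental group of a punctured neighborhood of the vertex is infinite, the corresponding infinite \'etale cover of the punctured germ does not complete to a cover satisfying~\lref{743}, and the asserted correspondence breaks down. The remaining parts of your argument---the translation of ``$\gamma_j^{m_j}$ trivial on fibres'' into $\gamma_j^{m_j} \in G'$, and the formal deduction of items \lref{ocp.1}--\lref{ocp.3}---are fine once the extension step is repaired by invoking this finiteness.
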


\begin{proof}
The proof is the same as in the snc case, cf.~\cite[Thm.~1.1]{Claudon08}, with one important difference:
in order to extend (possibly non-finite) \'etale covers of $X^*$ to branched covers of $X$, we would like to apply~\cite[Thm.~3.4]{DethloffGrauert94}.
In order to do this, we must invoke the finiteness of local orbifold fundamental groups of klt pairs, as proved in~\cite[Thm.~1]{Braun21}.
(Note that~\cite{Braun21} works in the algebraic category, but in view of~\cite[Thm.~1.7]{Fujino22} and~\cite[Rem.~6.10]{CGGN} his result extends to complex spaces as well.)
\end{proof}

\begin{defi}[Universal cover] \label{univ cov}
The (\emph{orbifold}) \emph{universal cover} of $(X, \Delta)$ is the cover $\pi \from \wt X_\Delta \to X$ corresponding to the trivial subgroup $\set1 \subset \piorb{X, \Delta}$ under the correspondence from \cref{orbifold cover pi1}.
\end{defi}

\noindent
Let $\wt \Delta$ be the divisor on $\wt X_\Delta$ which is supported on $\pi\inv(\supp \Delta)$ and satisfies
\[ K_{\wt X_\Delta} + \wt \Delta = \pi^*(K_X + \Delta). \]
It is easy to see that the pair $(\wt X_\Delta, \wt \Delta)$ is again klt with standard coefficients.
Also, $\wt \Delta = 0$ if and only if $\pi$ is branched exactly at $\Delta$.

\begin{defi}[Developable pairs]
We say that $(X, \Delta)$ is \emph{developable} if in the above notation, $\wt X_\Delta$ is smooth and $\wt \Delta = 0$.
\end{defi}

\noindent
Intuitively, being developable means that the universal cover is a manifold.

\begin{exa}
Consider the klt pair $(X, \Delta)$, where $X = \PP^1$ and
\[ \Delta = \left( 1 - \textstyle\frac1n \right) \cdot [0] + \left( 1 - \textstyle\frac1m \right) \cdot [\infty] \]
with $n, m \ge 2$.
Set $d = \operatorname{gcd}(n, m)$.
Then $\piorb{X, \Delta} = \factor \Z {d \Z}$, and the universal cover $\pi \from \wt X_\Delta = \PP^1 \to \PP^1$ is given by $[z_0 : z_1] \mapsto [z_0^d : z_1^d]$.
We have
\[ \wt \Delta = \left( 1 - \textstyle\frac1{n/d} \right) \cdot [0] + \left( 1 - \textstyle\frac1{m/d} \right) \cdot [\infty]. \]
In particular, $(X, \Delta)$ is developable if and only if $n = m$.
\end{exa}

\begin{cor}[Galois closure] \label{gal closure}
Let $Y \to X$ be a finite cover branched at most at $\Delta$.
Then there is a finite cover $Y' \to Y$ such that the composition $Y' \to X$ is finite, Galois, and branched at most at $\Delta$.
If additionally $Y \to X$ is branched exactly at $\Delta$, then the same is true of $Y' \to X$, and $Y' \to Y$ is \qe.
\end{cor}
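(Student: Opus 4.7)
The plan is to work entirely via the Galois correspondence established in \cref{orbifold cover pi1}. Let $G \subset \piorb{X, \Delta}$ be the subgroup attached to the finite cover $Y \to X$, so that $[\piorb{X, \Delta} : G] < \infty$ by \lref{ocp.1}. I would then form the normal core
\[ G' \defn \bigcap_{g \in \piorb{X, \Delta}} g G g^{-1}, \]
which is a normal subgroup of finite index in $\piorb{X, \Delta}$, because only finitely many distinct conjugates of $G$ occur. The cover $Y' \to X$ produced by the correspondence is then finite by \lref{ocp.1}, Galois by \lref{ocp.2}, and branched at most at $\Delta$ by the very definition of the correspondence. The inclusion $G' \subset G$ yields via \lref{ocp.3} the desired intermediate factorization $Y' \to Y \to X$.

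Now assume in addition that $Y \to X$ is branched exactly at $\Delta$. I would verify the two remaining assertions by a multiplicativity-of-ramification argument in codimension one. Fix a component $\Delta_j$ of $\Delta$, an irreducible component $\wt\Delta'_{j,k} \subset Y'$ of the preimage of $\Delta_j$, and its image $\wt\Delta_{j,l} \subset Y$ under $Y' \to Y$. Let $e$, $m_j$, and $r$ denote the ramification indices of $Y' \to Y$ along $\wt\Delta'_{j,k}$, of $Y \to X$ along $\wt\Delta_{j,l}$, and of $Y' \to X$ along $\wt\Delta'_{j,k}$ respectively. Multiplicativity gives $r = e \cdot m_j$, while $r \mid m_j$ since $Y' \to X$ is branched at most at $\Delta$. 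Therefore $e = 1$ and $r = m_j$: the former says precisely that $Y' \to Y$ is unramified in codimension one, hence \qe, while the latter says that $Y' \to X$ is branched exactly at $\Delta$.

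The only real content is the formation of the normal core, which is elementary group theory relying on finiteness of the index of $G$. I do not anticipate any serious obstacle: \cref{orbifold cover pi1} packages all the geometric input, and the ramification bookkeeping at the end is routine.
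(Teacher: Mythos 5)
Your proposal is correct and follows essentially the same route as the paper: reduce via the correspondence of \cref{orbifold cover pi1} to the existence of a finite-index normal subgroup contained in a given finite-index subgroup (the normal core), and then settle the ``branched exactly'' and quasi-\'etaleness claims by multiplicativity of ramification indices over the components of $\Delta$. The only cosmetic difference is that you index the intersection over all of $\piorb{X,\Delta}$ rather than over coset representatives, which gives the same subgroup.
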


\noindent
We call $Y' \to X$ the \emph{Galois closure} of $Y \to X$.

\begin{proof}
Using the correspondence from \cref{orbifold cover pi1}, the statement boils down to the following:
for a group $G$ and a subgroup $H \subset G$ of finite index, there is a normal subgroup $N \trianglelefteq G$ of finite index such that $N \subset H$.
But this is easy (and well-known): simply set
\[ N \defn \bigcap_{\mathclap{g \in G/H}} \, g H g\inv. \]
The last statement is easily seen to be true by comparing the ramification indices of $Y \to X$ and $Y' \to X$ over the components $\Delta_i$.
\end{proof}

\section{Orbifold Chern classes of klt pairs} \label{sec chern}

In this section, we recall the definition of the first and second orbifold Chern classes for klt pairs, in the spirit of~\cite{GK20}.
We then explain how to compute them concretely in two cases: in the projective setting by a cutting-down argument (\cref{subsec cutting down}), and when we have an ``orbi-resolution'' at our disposal (\cref{subsec orbi res}).

\subsection{The general \kahler case} \label{subsec general kahler c2}

Let us begin by recalling how to define Chern numbers associated with the first and second Chern classes.
This is nothing but a slight generalization of~\cite[Def.~5.2]{GK20} that takes into account the presence of a boundary.
The construction relies on the Chern--Weil formalism in the orbifold setting.
We will not recall the basic definitions and properties for the differential geometry of orbifolds (e.g.~Hermitian metrics on orbifold bundles, orbifold Chern classes, orbifold de Rham cohomology, and so on).
A good reference is~\cite[Sec.~2]{Blache96}.

\medskip
Let $(X,\Delta)$ as in \cref{setup:flat} and let $\Xc \subset X$ be the largest open subset of $X$ such that $(X, \Delta)$ admits a smooth orbi-\'etale orbi-structure $\cC^\circ$, and set $Z \defn X \setminus X^\circ$.
As proved in \cref{klt orb}, $\dim Z \le n - 3$.
Next, let $\alpha \in \HH{2n-4}.X.\R.$ where that cohomology space is understood as the cohomology of the locally constant sheaf $\underline{\R}_X$.
For dimensional reasons, we have an isomorphism $\HHc{2n-4}.\Xc.\R. \bij \HH{2n-4}.X.\R.$.
Next, the de Rham complex of orbifold differential forms on $\Xc$ yields a de Rham--Weil isomorphism $\HHdRc{\bullet}.\Xc, \R. \to \HHc{\bullet}.\Xc. \R.$, so that in the end we get a natural isomorphism
\begin{equation}\label{eq:cohomology 2n-4}
\psi: \HHdRc{2n-4}.\Xc, \R. \overset{\sim}{\longrightarrow} \HH{2n-4}.X.\R. .
\end{equation}

Now, let $E\to \Xc$ be an orbifold bundle for the pair $(X^\circ, \Delta^\circ)$. We can equip it with an orbifold Hermitian metric $h$ and form the Chern classes $\chorb{i}{E,h}$ which are orbifold differential forms of bidegree $(i,i)$.
We can use the isomorphism~\lref{eq:cohomology 2n-4} to define real numbers when $i=2$.
If $\alpha\in\HH{2n-4}.X.\R.$, the class $\psi^{-1}(\alpha)$ can be represented by a compactly supported orbifold $(2n-4)$-form $\Omega$ on $\Xc$, so that $\chorb2{E,h}\wedge \Omega$ is a compactly supported orbifold $(n,n)$-form on $\Xc$.

\begin{defi}\label{def:orbi Chern bundle}
The orbifold second Chern class $\ccorb2{E}$ is the unique element in the dual space $\HH{2n-4}.X.\R.\dual$ which under $\psi \dual$ corresponds to the Poincar\'e dual of the class $\chorb2{E}\in \HHdR{4}.\Xc, \R.$, where the latter is taken with respect to (but independent of) the orbi-structure $\cC^\circ$.
The quantity
\[\ccorb2{E} \cdot \alpha  \defn  \int_{X^\circ} \chorb2{E,h} \wedge \Omega\]
is thus a well defined real number for any class $\alpha \in \HH{2n-4}.X.\R.$.
\end{defi}

Let us apply the above construction to $\Omega^1_{(X^\circ,\Delta^\circ)}$ the orbifold bundle of differential forms. For the first Chern class, one can avoid the use of orbistructures and define it directly as a cohomology class as follows.

\begin{defi} \label{def:orbi c1}
For a klt pair $(X, \Delta)$, we set
\[ \ccorb1{X,\Delta} \defn \frac{1}{m}\cc1{\left({\can X}^{[m]}\otimes\O X(m\Delta)\right)\ddual}\in\HHs2.X,\R. \]
where $m \ge 1$ is an integer such that the reflexive rank~$1$ sheaf $\left({\can X}^{[m]}\otimes\O X(m\Delta)\right)\ddual$ is a line bundle.
\end{defi}

Now let us consider the case of the second Chern class.

\begin{defi}\label{def:orbi c2}
The orbifold second Chern class $\ccorb2{X, \Delta}\in \HH{2n-4}.X.\R.\dual$ of the pair $(X, \Delta)$ is the second Chern class of the orbi-bundle $\Omega^1_{(X^\circ,\Delta^\circ)}$ on $X^\circ$ defined in \cref{orbi cotang}. 
\end{defi}

\begin{rem} \label{c2 homology}
As already observed in \cite[p.~893]{GK20}, the object constructed in~\cref{def:orbi c2} is naturally a homology class:
\[ \ccorb2{X, \Delta} \in \Hh{2n-4}.X.\R. . \]
\end{rem}

\subsection{The projective case --- Mumford's construction} \label{subsec:Mumford}

Let $(X, \Delta)$ be a projective dlt pair with standard coefficients such that each component $\Delta_i$ of $\Delta$ is \Q-Cartier.
In~\cite[\S 3.1, p.~1458]{GT16}, the orbifold Chern classes $\ccorb2{X, \Delta}$ and $\cpcorb12{X, \Delta}$ were defined as multilinear forms on $\NS X$.
Here we would like to observe that this procedure can also be carried out without the assumption that the $\Delta_i$ be \Q-Cartier.
Our argument follows the proof of~\cite[Thm.~3.13]{GKPT15} closely. ---
We will restrict attention to the case of klt pairs, as we are only concerned with those in this paper.

So let $(X, \Delta)$ be an $n$-dimensional projective klt pair with standard coefficients.
Applying \cref{klt orb}, we obtain an open subset $X^\circ \subset X$ whose complement has codimension $\ge 3$ and such that $(X^\circ, \Delta|_{X^\circ})$ admits a smooth orbi-\'etale orbi-structure $\cC$.
Consider the ``big global cover'' $\gamma \from \wh{X^\circ} \to X^\circ$ associated to $\cC$, cf.~\cite[\S\S2--3]{Mumford83}, which up to shrinking $X^\circ$ may be assumed to be Cohen--Macaulay.
The locally free orbi-sheaf $\Omegar\cC1$ from \cref{orbi cotang} induces a genuine locally free sheaf $\sF$ on $\wh{X^\circ}$.
The Chern classes of~$\sF$ induce classes $\cc i{\Omegar\cC1} \in \Chow{X^\circ}{n - i}$.
Since $\Chow{X^\circ}*$ is equipped with a ring structure, we also have $\cpc1{2}{\Omegar\cC1} \in \Chow{X^\circ}{n - 2}$.
For dimensional reasons, $\Chow X{n - i} \bij \Chow{X^\circ}{n - i}$ is an isomorphism for $i \le 2$.
We obtain classes $\cc2{\Omegar\cC1}$ and $\cpc12{\Omegar\cC1} \in \Chow X{n - 2}$, which are independent of the choice of $\cC$ by~\cite[Prop.~3.5]{GT16}.
The orbifold Chern classes $\ccorb2{X, \Delta}$ and $\cpcorb12{X, \Delta}$ are then given by cap product with Chern classes of line bundles on $X$:
\begin{align*}
\ccorb2{X, \Delta} \cdot \sL_1 \cdots \sL_{n - 2} & \defn \deg \big( \cc2{\Omegar\cC1} \cap \cc1{\sL_1} \cap \cdots \cap \cc1{\sL_{n-2}} \big), \\
\cpcorb12{X, \Delta} \cdot \sL_1 \cdots \sL_{n - 2} & \defn \deg \big( \cpc12{\Omegar\cC1} \cap \cc1{\sL_1} \cap \cdots \cap \cc1{\sL_{n-2}} \big),
\end{align*}
and these maps factors via $\NS X$.

\subsection{The projective case --- cutting down} \label{subsec cutting down}

If $(X,\Delta)$ is a projective klt pair with standard coefficients, then \cref{klt orb} allows one to generalize Mumford's construction of $\Q$-Chern classes \cite{Mumford83} to this setting as explained above.
The fact that the Chern--Weil construction from~\cref{def:orbi c2} and Mumford's definition of $\Q$-Chern classes are equivalent is given in \cite[Claim~6.5]{GK20} in the case where $\Delta=0$.
It extends readily to the more general setting of klt pairs with standard coefficients.

Since $\psi$ is an abstract isomorphism, it is in practice difficult to actually compute these numbers.
There is, however, an important situation where things get much more explicit and that is when $\alpha =\cc1{L}^{n-2}$ where $L$ is an ample line bundle on $X$ (we could also have $(n-2)$ different ample line bundles, but let us stick to the former case for simplicity).
By homogeneity of the intersection product, we can assume that $L$ is very ample and induces an embedding $i:X\hookrightarrow \PP^N$ such that $L \isom i^* \O{\PP^N} (1)$.
We pick $(n-2)$ hyperplanes $H_1, \ldots, H_{n-2}$ in general position.
In particular, one has that $\sum H_i$ has simple normal crossings and $S \defn H_1\cap \cdots \cap H_{n-2}\cap X \subset X^\circ$. 

\begin{lem}\label{lem:c2 restriction surface}
With the notation as above, the Chern number from~\cref{def:orbi Chern bundle} can be computed with the following formula:
\begin{equation} \label{eq:restriction}
\ccorb2{E} \cdot \cc1{L}^{n-2} = \int_{S} \chorb2{E, h} \! \big|_S.
\end{equation}
\end{lem}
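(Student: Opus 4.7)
The plan is a standard Poincar\'e-duality argument: I would realize $\cc1{L}^{n-2}$ as the Poincar\'e dual of the fundamental class $[S]$, and then invoke the defining property of a Thom form representative. Only three small pieces are needed: a smooth orbifold structure on $S$, a Thom form for $S$ in $X^\circ$, and an identification of cohomology classes via $\psi$.

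First, I would invoke \cref{surface}: the surface $S$ lies in the orbifold locus $X^\circ$, and the restriction of $\cC^\circ$ equips $S$ with a smooth orbi-\'etale orbi-structure. Consequently $\chorb2{E, h}|_S$ is a genuine orbifold $(2,2)$-form on the two-dimensional orbifold $S$, so the right-hand side of~\lref{eq:restriction} is well-defined. Next, since $S \subset X^\circ$ is a compact smooth orbifold submanifold of real codimension $2(n-2)$, I would construct by a partition-of-unity argument on orbifold charts a closed orbifold $(2n-4)$-form $\Omega$ on $X^\circ$, compactly supported in a small tubular neighborhood of $S$, that represents a Thom class of $S$ in $X^\circ$. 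Equivalently, $\Omega$ should satisfy
\[ \int_{X^\circ} \eta \wedge \Omega \;=\; \int_S \eta|_S \]
for every closed orbifold $4$-form $\eta$ on $X^\circ$.

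I would then check that $\psi([\Omega]) = \cc1{L}^{n-2}$ in $\HHs{2n-4}.X.\R.$. By its construction in~\lref{eq:cohomology 2n-4}, the isomorphism $\psi$ is the de Rham realization of extension by zero $\HHc{2n-4}.X^\circ.\R. \to \HHs{2n-4}.X.\R.$, so $\psi([\Omega])$ is the Poincar\'e dual on $X$ of $[S] \in \Hhs4.X.\R.$. Since $S$ is the transverse intersection of $n-2$ hyperplane sections of $L$, classical Poincar\'e duality on $X$ yields $[S] = \cc1{L}^{n-2} \cap [X]$, whence the identification. Plugging $\Omega$ into \cref{def:orbi Chern bundle} with $\alpha = \cc1{L}^{n-2}$ and using the Thom-form property then gives
\[ \ccorb2{E} \cdot \cc1{L}^{n-2} \;=\; \int_{X^\circ} \chorb2{E, h} \wedge \Omega \;=\; \int_S \chorb2{E, h}|_S, \]
which is the claim.

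The only non-routine point is the construction of an orbifold Thom form with the desired integral identity. However, since $S$ is compact and entirely contained in the locus $X^\circ$ where $(X, \Delta)$ admits smooth orbi-\'etale charts, this should reduce to a finite gluing of classical Thom forms on smooth local covers, combined with averaging under the relevant finite group actions; the Thom class on each chart is invariant under the Galois group by naturality, so it descends, and the integral identity is then the standard one on the local smooth covers weighted by the orders of the isotropy groups. I therefore expect the argument to be an essentially direct adaptation of the smooth-manifold case.
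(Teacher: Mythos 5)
Your localization strategy is in the right spirit, and the construction of an orbifold Thom form for $S$ inside $\Xc$ (equivariant tubular neighbourhood in the smooth charts plus averaging, as in \cref{lem:kahler orbi}) is plausible; the Chern--Weil form $\chorb2{E,h}$ is a closed orbifold $4$-form, so the localization identity $\int_{\Xc}\chorb2{E,h}\wedge\Omega=\int_S\chorb2{E,h}|_S$ would indeed follow. The genuine gap is the identification $\psi([\Omega])=\cc1{L}^{n-2}$ in $\HH{2n-4}.X.\R.$. You justify it by ``classical Poincar\'e duality on $X$'', but $X$ is a \emph{singular} (klt) projective variety, so there is no Poincar\'e duality on $X$: the cap product $\cdot\cap[X]\from\HH{2n-4}.X.\R.\to\Hh4.X.\R.$ need not be injective, and ``the Poincar\'e dual on $X$ of $[S]$'' is not a well-defined cohomology class. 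Even granting the homological identity $[S]=\cc1{L}^{n-2}\cap[X]$, and granting (which itself requires an argument) that $\psi([\Omega])\cap[X]=[S]$, you cannot conclude $\psi([\Omega])=\cc1{L}^{n-2}$: the two classes could differ by an element of the kernel of $\cdot\cap[X]$. Duality is only available on the rational homology manifold $\Xc$, and the codimension bound $\codim X{X\setminus \Xc}\ge 3$ controls $\HHc{2n-4}.\Xc.\R.\to\HH{2n-4}.X.\R.$ (this is how $\psi$ in~\lref{eq:cohomology 2n-4} is built) but gives no control of $\Hh4.\Xc.\R.\to\Hh4.X.\R.$, so the identification cannot be pushed through homology of $X$ either.

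This is precisely the point the paper's proof is designed to avoid. There, one never identifies an intrinsically constructed class: instead $\cc1{L}^{n-2}$ is represented from the start by restrictions of explicit closed forms on $\PP^N$. Each factor $\cc1{\cO_{\PP^N}(1)}$ is represented by a regularization $\omega_{i,\ep}=\om_{\rm FS}+dd^c\vp_{i,\ep}$ of the current $[H_i]$, with a cut-off forcing $\supp\omega_{i,\ep}\subset\set{|s_i|\le2\delta}$; hence $\Omega_\ep=\bigwedge_i\omega_{i,\ep}$ restricts to a form supported in a fixed compact subset of $\Xc$, and functoriality of the comparison diagram between $\PP^N$, $X$ and $\Xc$ gives $\psi([\Omega_\ep|_{\Xc}])=\cc1{L}^{n-2}$ with no duality on $X$ whatsoever. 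The formula then follows by letting $\ep\to0$, since $\Omega_\ep\to[W]$ with $W=\bigcap_i H_i$, and by the support control $\Omega_\ep|_{\Xc}\to[S]$ as currents on the orbifold $\Xc$. Your Thom-form idea can be repaired along the same lines: build the Thom form for $W$ inside $\PP^N$ with support so small that its restriction to $X$ is compactly supported in $\Xc$; its class under $\psi$ is then $\cc1{L}^{n-2}$ by functoriality, and transversality of the $H_i$ to $S$ in the orbifold charts shows that the restriction is a Thom form for $S$ in $\Xc$ --- but that is essentially the paper's argument, whereas the purely intrinsic identification you propose does not go through on a singular $X$.
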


\begin{proof}
To begin with, let us choose sections $s_i\in \HH0.\PP^N.\cO_{\PP^N}(1).$ such that $H_i=\{s_i=0\}$, and we equip $\cO_{\PP^N}(1)$ with the Fubini--Study metric.
Next, we choose cut-off functions $\chi_i: \PP^N\to [0,1]$ such that 
\[\chi_i = 
\begin{cases}
0 & \mbox{on} \quad \set{|s_i|\le\delta} \\
1 & \mbox{on} \quad \set{|s_i|\ge2\delta}
\end{cases}
\]
for some $\delta>0$ small enough so that
\[ \bigcap_{i=1}^{n-2} \set{|s_i|\le 2\delta} \cap X \subset X^\circ. \]
For any $\ep\in (0,1]$, one defines $\vp_{i,\ep} \defn \chi_i \log |s_i|^2+(1-\chi_i)\log(|s_i|^2+\ep^2)$ and set $\omega_{i,\ep} \defn \om_{\rm FS}+dd^c \vp_{i,\ep}$.
Clearly, $\omega_{i,\ep}$ is supported on $\{|s_i| \le 2\delta\}$ and $\omega_{i,\ep} \to [H_i]$ as $\ep \to 0$, both weakly as currents on $\PP^N$ and locally smoothly away from $H_i$.
We set $\Omega_\ep  \defn  \bigwedge_{i=1}^{n-2}\omega_{i,\ep}$, which is supported on $\bigcap_{i=1}^{n-2} \{|s_i|\le 2\delta\}$.

The immersion $i: X^\circ \hookrightarrow \PP^N$ induces a commutative diagram
\[
 \begin{tikzcd}
 \HHdR{2n-4}.\PP^N, \R. \arrow[r, "\sim"] \arrow[d, "i^*"] &  \HH{2n-4}.\PP^N.\R.  \arrow[d, "i^*"] \\
 \HHdR{2n-4}.X^\circ, \R.   \arrow[r, "\sim"]&  \HH{2n-4}.\Xc.\R. .
 \end{tikzcd} 
\]
and by our choices the image $i_*[\Omega_\ep]$ lands in the image of the natural map
\[\HHdRc{2n-4}.X^\circ, \R. \to \HHdR{2n-4}.X^\circ, \R.\]
and satisfies $\psi(i_*[\Omega_\ep]) = \cc1{\cO_{\PP^N}(1)}^{n-2}|_X=\cc1{L}^{n-2}$. Therefore, we have for any $\ep>0$ the identity
\begin{equation}\label{eq:orbi chern class epsilon}
\ccorb2{E} \cdot \cc1{L}^{n-2} = \int_{X^\circ} \chorb2{E,h} \wedge \Omega_\ep.
\end{equation}
Now, since $\sum H_i$ has simple normal crossings, an easy local computation shows that $\Omega_\ep$ converges to the current of integration along the submanifold $W \defn \bigcap_{i=1}^{n-2} H_i$, both weakly on $\mathbb P^N$ and locally smoothly away from $W$.
Since the support of $\Omega_\ep|_X$ is contained in a fixed compact subset of $X^\circ$, ones sees that $\Omega_\ep|_{\Xc}$ converges weakly to $[S]=[ W \cap \Xc]$ in the sense of currents on the orbifold $X^\circ$.
Letting $\ep$ tend to~$0$ in~\lref{eq:orbi chern class epsilon}, we finally get the formula~\lref{eq:restriction}.
\end{proof}

\subsection{Orbi-resolutions and Chern numbers} \label{subsec orbi res}

When $X$ is smooth in codimension two, one can compute Chern numbers on a resolution of singularities, cf.~e.g.~\cite{CGG}.
In the presence of singularities in codimension two, it is explained in loc.~cit.~that a resolution does not compute Chern numbers anymore in general.
The substitute of a resolution in that setting is an \emph{orbi-resolution} as defined below.

\begin{defi}[Orbi-resolutions] \label{defi:orbi resolution}
Let $(X, \Delta)$ be a pair, where $X$ is a normal complex space, $\Delta$ has standard coefficients and let $\Xc \subset X$ be the orbifold locus of $(X, \Delta)$.
An orbi-resolution of $(X, \Delta)$ is a surjective, proper bimeromorphic map $\pi \from \wh X \to X$ from a normal complex space $\wh X$ such that:
\begin{enumerate}
\item $\big( \wh X, \wh \Delta \defn \pi_*^{-1}(\Delta) \big)$ has only quotient singularities, and
\item $\pi$ is isomorphic over $\Xc$.
\end{enumerate} 
\end{defi}

The existence of orbi-resolutions can be established\footnote{The proof of~\cite[Thm.~3]{LiTian19} applies verbatim when $\Delta \ne 0$, but we will only use the existence of orbi-resolutions when $\Delta = 0$.} for quasi-projective varieties (with $\Delta = 0$), using deep results about stacks as Chenyang Xu has showed in~\cite[\textsection 3]{LiTian19}.
However, the construction proposed there is highly non-canonical (or non-functorial) and this makes it difficult to generalize it to the complex analytic setting, even assuming algebraic singularities.

One important application of the existence of orbi-resolutions is highlighted by the following lemma, which shows that we can use such partial resolutions to compute the orbifold second Chern class of $(X, \Delta)$ against a class in $\HH2n-4.X.\R.$.

\begin{lem} \label{same c2}
Let $(X, \Delta)$ be a pair as in~\cref{setup:flat}.
Assume that $(X, \Delta)$ admits an orbi-resolution $\pi \from ( \wh X, \wh \Delta ) \to ( X, \Delta )$ as in~\cref{defi:orbi resolution}.
Given any $a \in \HH{2n-4}.X.\R.$, one has the formula
\begin{equation*}
\ccorb2{X,\Delta} \cdot a = \chorb2{\wh X, \wh \Delta} \cdot \psi(\pi^*a),
\end{equation*}
where on the right-hand side, $\chorb2{\wh X, \wh \Delta} \in \HHdR4.\wh X, \R.$ is the usual orbifold second Chern class of $(\wh X, \wh \Delta)$ and $\psi \from \HH\bullet.\wh X.\R. \to \HHdR{\bullet}.\wh X, \R.$ is the orbifold de Rham--Weil isomorphism.
\end{lem}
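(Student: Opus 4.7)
The plan is to unfold both sides as Chern--Weil integrals and then exploit the fact that $\pi$ is an isomorphism over the orbifold locus $X^\circ$ of $(X,\Delta)$.

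First I would expand the left-hand side via~\cref{def:orbi Chern bundle}: pick a compactly supported orbifold $(2n-4)$-form $\Omega$ on $X^\circ$ representing $\psi^{-1}(a)$ and any orbifold Hermitian metric $h$ on $\Omegar{(X^\circ,\Delta^\circ)}1$; then
\[ \ccorb2{X,\Delta}\cdot a = \int_{X^\circ} \chorb2{\Omegar{(X^\circ,\Delta^\circ)}1, h}\wedge \Omega. \]
On the right-hand side, because $(\wh X,\wh\Delta)$ has only quotient singularities its orbifold locus is all of $\wh X$ (up to a set of codimension $\geq 3$), and the analogous formula holds: the pairing equals an integral on $\wh X$ of $\chorb2{\Omegar{(\wh X,\wh\Delta)}1,\wh h}\wedge\wh\Omega$ for any closed representative $\wh\Omega$ of $\psi(\pi^*a)$.

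Next I would manufacture a convenient $\wh\Omega$ by pullback. Setting $\wh X^\circ \defn \pi^{-1}(X^\circ)$, the restriction $\pi|_{\wh X^\circ}\from \wh X^\circ\to X^\circ$ is a biholomorphism by definition of an orbi-resolution, and it identifies the smooth orbi-\'etale orbi-structures on both sides, hence also their orbi-cotangent sheaves. Since $\supp\Omega$ is compact in $X^\circ$, extending $\Omega$ by zero to $X$ yields a closed form still representing $a$; its pullback $\pi^*\Omega$ is then a closed orbifold $(2n-4)$-form on $\wh X$ supported in $\wh X^\circ$, and by naturality of the de Rham--Weil isomorphism with respect to $\pi$ it represents $\psi(\pi^*a)$. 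Thus I may take $\wh \Omega \defn \pi^*\Omega$.

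With this choice the right-hand integral is supported in $\wh X^\circ$, and the change of variables along $\pi|_{\wh X^\circ}$ turns it into an integral on $X^\circ$ of $\chorb2{\Omegar{(X^\circ,\Delta^\circ)}1, h'}\wedge \Omega$, where $h'$ denotes the pushforward of $\wh h$ under the biholomorphism. A standard Bott--Chern transgression on the smooth local covers shows that $\chorb2{\cdot,h}$ and $\chorb2{\cdot,h'}$ differ by an exact orbifold $(2,2)$-form, and since $\Omega$ is closed and compactly supported in $X^\circ$, orbifold Stokes annihilates this discrepancy and yields the claimed equality. The main obstacle I anticipate is purely the bookkeeping around the codimension~$\geq 3$ boundary: verifying that zero-extension across $X\setminus X^\circ$ and pullback across the exceptional locus of $\pi$ really realise $a$ and $\pi^*a$ under the de Rham--Weil isomorphisms in play. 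This should reduce to naturality of~\lref{eq:cohomology 2n-4} combined with the codimension estimate, but must be written out carefully.
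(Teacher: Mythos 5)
Your argument is correct and follows essentially the same route as the paper: represent $a$ by a compactly supported orbifold form on $X^\circ$, pull it back along the isomorphism $\pi|_{\pi^{-1}(X^\circ)}$, identify the resulting class with $\psi(\pi^*a)$ via naturality of the de Rham--Weil isomorphisms (the commutative diagram the paper writes out, which is indeed the point needing care), and compare the integrals by change of variables. The only cosmetic difference is that the paper descends a single orbifold metric from $\wh X$ to $X^\circ$ so the integrands agree on the nose, whereas you use two metrics and dispose of the discrepancy by Bott--Chern transgression and Stokes; both are fine.
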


\begin{proof}
With the notation from~\cref{defi:orbi resolution}, let us denote $\wh X \setminus E \defn \pi^{-1}(\Xc)$ and $j \from \wh X \setminus E \to \wh X$ the natural inclusion; for simplicity we set $k \defn 2n-4$ and skip the reference to $\R$ in the cohomology spaces below.
Finally, we set  $\pi_0 \defn \pi|_{\wh X\setminus E} \from \wh X \setminus E \to \Xc$.

We then have the following diagram
\[
 \begin{tikzcd} 
 & & \HHdR k.\wh X. \\
\HHdRc{k}.{\wh X\setminus E}. \arrow[r,"\phi"] \arrow [urr, "j_*^\mathrm{dR}"] & \HHsc k.\wh X\setminus E.  \arrow[r,"j_*"] & \arrow[u, "\psi"] \HHs{k}.\wh X. \\
\HHdRc k.\Xc.  \arrow[r, "\phi"]  \arrow[u, "(\pi_0^\mathrm{dR})^*"]&  \HHsc k.\Xc. \arrow[r,"i_*"] \arrow[u, "\pi_0^*"] & \HHs k.X. \arrow[u,"\pi^*"]
 \end{tikzcd} 
\]
where all arrows except for $j_*, j_*^\mathrm{dR}$ and $\pi^*$ are isomorphisms. Now, one can pick an orbifold Hermitian metric $\wh h$ on $T_{\wh X,\wh \Delta}$ and descend it to an orbifold Hermitian metric $h$ on $T_{\Xc}$ since $\pi$ is an isomorphism $\wh X\setminus E \to \Xc$. Then, if as before $\alpha$ is an orbifold representative of $\phi^{-1}( i_*^{-1}(a))$ with compact support in $\Xc$, we have 
\begin{align*}
\ccorb2{X,\Delta}\cdot a &= \int_{\Xc} \chorb2{\Xc,h} \wedge \alpha\\
&=  \int_{\wh X\setminus E} \chorb2{\wh X,\wh h} \wedge \pi^*\alpha\\
&= \chorb2{\wh X,\wh \Delta} \cdot [\pi^*\alpha]_{\mathrm{dR}}\\
&=  \chorb2{\wh X,\wh \Delta} \cdot \psi(\pi^*a)
\end{align*}
since we have $\psi(\pi^*a) = (j_*)^\mathrm{dR}([\pi^*\alpha]_{\mathrm{dR}})$ from the commutativity of the diagram above. 
\end{proof}

We conclude this paragraph with a remark on the non-orbifold locus.
For the sake of clarity (and also since we will use only this case), we stick to the case $\Delta = 0$.

If $X$ is a normal complex space that admits an orbi-resolution $\pi \from \wh X \to X$ in the sense of~\cref{defi:orbi resolution}, it is immediate that its non-orbifold locus $X \setminus X^\orb$ coincides with $\pi(E)$, where $E \subset \wh X$ is the exceptional locus of $\pi$.
In particular, the non-orbifold locus is an analytic subset of $X$.
This latter statement is very natural and should be true regardless of the existence of orbi-resolutions.
Unfortunately, we are neither able to prove it in the general analytic setting nor able to locate a suitable reference.
We can, however, prove it under the additional assumption that the singularities of $X$ are algebraic.
This is sufficient for the application in \cref{sec flat case}.

\begin{lem}[Analyticity of the non-orbifold locus] \label{lem:orbifold locus analytic}
Let $X$ be a normal complex space having only algebraic singularities (in the sense of~\cite[Def.~2.4]{CGGN}).
Then its non-orbifold locus $Z \defn X \setminus X^\orb$ is a closed analytic subset.

In particular, this applies if $X$ is a compact klt \kahler space with $\cc1X = 0$.
\end{lem}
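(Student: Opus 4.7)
The plan is to reduce the statement to the algebraic setting where the existence of orbi-resolutions, due to Chenyang Xu \cite[\S3]{LiTian19}, is available, and then exploit the observation immediately preceding the lemma which identifies the non-orbifold locus with the image of an exceptional divisor.

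More precisely, since $X$ has algebraic singularities in the sense of~\cite[Def.~2.4]{CGGN}, every point $x \in X$ admits an open analytic neighborhood $U_x \subset X$ that is biholomorphic to an analytic open subset $V_x$ of some quasi-projective complex variety $Y_x$. Applying \cite[Thm.~3]{LiTian19} to $Y_x$ (with $\Delta = 0$), we obtain an orbi-resolution $\pi_x \from \wh Y_x \to Y_x$ in the algebraic sense. Being a quotient singularity is a purely local analytic property, and $\pi_x$ is a proper bimeromorphic morphism which is an isomorphism over the orbifold locus $Y_x^\orb$; hence the restriction $\pi_x^{-1}(V_x) \to V_x$ is an orbi-resolution of $V_x$ in the sense of~\cref{defi:orbi resolution}, and pulling back via the biholomorphism yields one of $U_x$.

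By the observation made just before the lemma statement, the non-orbifold locus of $U_x$ coincides with $\pi_x(E_x) \cap V_x$, where $E_x \subset \wh Y_x$ is the exceptional locus of $\pi_x$. Since $\pi_x$ is proper, Remmert's proper mapping theorem ensures that $\pi_x(E_x)$ is a closed analytic subset of $Y_x$, and therefore its intersection with $V_x$ is a closed analytic subset of $U_x$. Since the non-orbifold locus is intrinsically defined, these local descriptions automatically agree on overlaps $U_x \cap U_{x'}$, so they glue to realize $Z = X \setminus X^\orb$ as a closed analytic subset of $X$.

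For the final sentence, it suffices to verify that any compact klt \kahler space $X$ with $\cc1X = 0$ has algebraic singularities, which is known: for instance, by \cite[Prop.~5.20]{KM} (or the covering construction of \cref{shokurov}) one can pass to a finite \qe cover with trivial canonical bundle and only canonical singularities, which is known to be of algebraic type (see e.g.~\cite{BakkerGuenanciaLehn20} and~\cite{CGGN}), and algebraicity of singularities descends along finite maps. The only real content is therefore the first part, whose main input is Xu's existence result; if one were to attempt a proof bypassing~\cite{LiTian19}, the key difficulty would be to establish directly that the locus of quotient singularities of an algebraic variety is Zariski open, which is the part that would otherwise require serious work.
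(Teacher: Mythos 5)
Your argument for the main statement is correct in outline but takes a genuinely different route from the paper. You localize exactly as the paper does (replace $X$ near a point by an analytic open subset of a quasi-projective model and observe that the non-orbifold locus is intrinsic, so the local descriptions glue), but for the algebraic local models you invoke Xu's existence theorem for orbi-resolutions~\cite{LiTian19} together with the remark preceding the lemma and Remmert's proper mapping theorem, whereas the paper deduces the algebraic case directly from Artin approximation~\cite[Cor.~2.6]{Artin69}. Your route buys a very short argument at the price of much heavier machinery (the stack-theoretic construction of orbi-resolutions), and it silently presupposes that Xu's partial resolution is an isomorphism precisely over the \emph{analytic} orbifold locus of \cref{quot sing}, i.e.\ that \'etale-local and analytic-local quotient singularities coincide; that comparison is exactly the kind of statement which~\cite[Cor.~2.6]{Artin69} furnishes, so the paper's proof is the more elementary and self-contained one. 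Since the paper itself asserts (in the discussion after \cref{defi:orbi resolution}) that orbi-resolutions in its sense exist for quasi-projective varieties with $\Delta = 0$, your first part is acceptable as written, but you should at least flag this compatibility.

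For the final sentence (``this applies if $X$ is a compact klt \kahler space with $\cc1X = 0$''), your argument has a genuine gap. You pass to the index-one cover $Y \to X$ via \cref{shokurov}, claim $Y$ has algebraic singularities, and then assert that ``algebraicity of singularities descends along finite maps.'' This descent is not obvious: locally $X$ is the quotient of an algebraic germ by a finite group acting only \emph{analytically}, and algebraizing that quotient requires an argument (again of Artin-approximation type), which you neither give nor cite. Moreover the detour is unnecessary: the relevant result, \cite[Thm.~B]{BakkerGuenanciaLehn20}, applies to $X$ itself, realizing $X$ as a fibre of a locally trivial family with projective fibres, whence $X$ has locally algebraic singularities by~\cite[Ex.~2.5]{CGGN} --- this is how the paper concludes, with no covering step and no descent claim.
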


\begin{proof}
When $X$ is algebraic, this is a straightforward consequence of~\cite[Cor.~2.6]{Artin69}.
If $U \subset X$ is a euclidean open subset of $X$ being isomorphic through a map $\varphi \from U \bij V$ to an open subset $V \subset Y$ of an algebraic variety, then we have $\varphi(Z \cap U) = V \setminus V^\orb$, and this is an analytic subset of $V$ by the algebraic case.
The subset $Z \cap U$ is then given by the vanishing of a family of holomorphic functions, i.e.~it is analytic in $U$.

The last statement is a consequence of~\cite[Thm.~B]{BakkerGuenanciaLehn20}: $X$ can be realized as a member of a locally trivial family which also has projective fibers.
The family being locally trivial (over a smooth connected base), all the fibers are locally isomorphic and such an $X$ then has locally algebraic singularities (cf.~\cite[Ex.~2.5]{CGGN}).
\end{proof}

\section{Uniformization of canonical models} \label{sec MY eq}

In this section, we prove \cref{MY eq}.
Let us first introduce notation.
We set $A \defn K_X + \Delta$ and pick a complete intersection surface $S = D_1 \cap \cdots \cap D_{n - 2}$ of $n - 2$ general hypersurfaces $D_i \in |mA|$, where $m$ is sufficiently large and divisible. ---
The proof is divided into four steps.

\subsection*{Step~1: The orbi Higgs-sheaf $(\mathcal E_X,\theta_X)$}

Using the notation introduced in the proof of \cref{klt orb}, we can find a (a priori non-smooth) orbi-étale structure $\mathcal C=\{U_\alpha, g_\alpha, U_\alpha'\}$ with respect to $(X,\Delta)$ on the whole $X$. Then, one can define the reflexive orbi-Higgs sheaf $(\sE_{X}, \theta_{X})$ with respect to $\mathcal C$ as follows:
\begin{equation} \label{eq:orbi-higgs}
\theta_{X} \from \sE_{X} \defn \Omegar{(X, \Delta)}1 \oplus \O{(X, \Delta)} \lto  \sE_{X} \tensor \Omegar{(X, \Delta)}1,
\end{equation}
where on each chart $U'_\alpha$, we define $\theta_{U'_\alpha}(a, f) \defn (0, a)$ where $(a,f)$ is a section of $\sE_{U_\alpha'} \defn \Omega_{U_\a'}^{[1]}\oplus\O{U_\alpha'}$.
Cf.~also \cref{orbi cotang} and~\cite[\textsection 5.1, Step~2]{GT16}.

\medskip
In order to compute Chern numbers involving $\sE_X$, one needs to introduce a global cover $f \from Y \to X$ and an actual reflexive sheaf $\sE_Y$ on $Y$ as we now explain. Thanks to \cref{ex strictly adapted}, there exists a finite morphism $f \from Y \to X$ that is strictly adapted for $(X, \Delta)$ and whose extra ramification in codimension one (i.e.~away from $\supp(\Delta)$) is supported over a general element $H$ of a very ample linear system on $X$.
Let $N$ be the ramification order along $H$; we have
\begin{equation} \label{ram codim 1}
K_Y = f^* \! \left( K_X + \Delta + \left( 1 - \textstyle \frac1N \right) \! H \right).
\end{equation}

We set $D \defn \Delta + \big( 1 - \frac1N \big) H$ and define $(X, D)_\orb$ to be the largest open subset of $X$ where the pair $(X, D)$ admits a smooth orbi-\'etale orbi-structure $\cC^\circ$; we know that $\codim X { X \setminus (X, D)_\orb } \ge 3$ by \cref{klt orb}.
One can be a bit more precise about the shape of $\cC^\circ$, which will be useful later.
Recall from the proof of \cref{klt orb} that if we set $K \defn I\times J$ and $\alpha \defn (\beta,\gamma)\in K$, then we have a diagram 
\[
\begin{tikzcd}
X'_\a \arrow[r, "f_\a"] \arrow[rr, "h_\a", bend left]&   U_\a' \arrow[r,"g_\a"]  \arrow[d,hookrightarrow] &U_\a \arrow[d,hookrightarrow] \arrow[r,hookrightarrow] & X \arrow[d, "\mathrm{id}"] \\
& U_\beta' \arrow[r,"g_\beta"] & U_\beta \arrow[r,hookrightarrow] & X
\end{tikzcd}
\]
where $X'_{\a}$ is smooth and $f_\a$ is quasi-étale.
Note that one can ``restrict'' $\sE_X$ to the orbifold locus $\bigcup_\alpha U_\a\subset X$ of $(X,\Delta)$ to get a \emph{locally free} orbi-Higgs sheaf with respect to the smooth orbi-étale structure $\{U_\a, h_\a, X'_\a\}_{\alpha\in K}$ for the pair $(X,\Delta)$ in codimension two, given by $\sE_{X'_\a} \defn f_\a^{[*]}(\sE_{U'_\beta}|_{U'_\a}) \simeq \Omega_{X'_\a}^1\oplus \O{X'_\a}$.
In particular, one can define the Chern number $\ccorb2{\sE_X} \cdot A^{n-2}$ as explained in \cref{subsec general kahler c2}.

By choosing $H$ general, one can arrange that $h^*_\alpha H$ is smooth for all indices $\alpha \in K$ thanks to Bertini's theorem, so that a further Kawamata cover $\kappa_\a \from X_\a \to X'_\a$ orbi-étale with respect to $(X'_a, h_\a^*(1-\frac 1N)H)$ yields the expected smooth orbi-\'etale orbi-structure $\cC^\circ \defn \{U_\a, p_\a, X_\a\}_{\a\in K}$ for the pair $(X, D)$ in codimension two where $p_\a=h_a \circ \kappa_\a$. We end up with the following factorization:
\[ \begin{tikzcd}
    X_\a \arrow[rd, "\kappa_\a"'] \arrow[rr, "p_\alpha"] & & U_\alpha \arrow[rr, "\text{\'etale}"] & & X \\
  & X'_\a \arrow[ru, "h_\alpha"'] &
\end{tikzcd} \]

Next, set
\[ Y^\circ \defn f\inv \big( (X, D)_\orb \big) \cap (Y, \emptyset)_\orb \subset Y. \]
Since $f$ is finite, and by \cref{klt orb} applied to $(Y, \emptyset)$, we have $\codim Y {Y \setminus Y^\circ} \ge 3$.
The map $f$ restricts to $f^\circ \from Y^\circ \to \Xc \defn (X, D)_\orb$. 

Finally, we set $T \defn f\inv(S)$.
Since the linear system $|mA|$ (resp.~$f^*|mA|$) is basepoint-free and $S$ is general, we have $S \subset \Xc$ (resp.~$T \subset Y^\circ$).
Also, recall from \cref{surface} that $(S, D|_S)$ has quotient singularities.
The following diagram summarizes the situation:
\[ \begin{tikzcd}
  T \arrow[r, hookrightarrow] \arrow[d, "f|_T", swap] & Y^\circ \arrow[r, hookrightarrow] \arrow[d, "f^\circ"] & Y \arrow[d, "f"] \\
  S \arrow[r, hookrightarrow] & X^\circ \arrow[r, hookrightarrow] & X
\end{tikzcd} \]
Moreover, the ramification formula $K_T = f^*(K_S + D|_S)$ shows that $T$ is klt as well, i.e.~it is a surface with quotient singularities.

\subsection*{Step~2: Computing Chern numbers for $\sE_X$.}

Set $\Delta^\circ \defn \Delta|_{X^\circ}$ and $D^\circ \defn D|_{X^\circ}$.
Consider the locally free orbi-sheaf for the pair $(X^\circ, D^\circ)$ with respect to the orbi-structure~$\cC^\circ$ constructed in Step 1 above, defined by
\begin{equation} \label{1043}
\sE_{X_\alpha} = \Omegar{(X^\circ, \Delta^\circ, p_\alpha)}1 \oplus \O{X_\alpha}.
\end{equation}
Since $\big( X_\alpha, p_\alpha\inv (H) \big)$ is log smooth, the subsheaf $\Omegar{(X^\circ, \Delta^\circ, p_\alpha)}1 \subset \Omegap{X_\alpha}1$ has a very explicit expression in terms of local coordinates.
More precisely, if $(z_1, \ldots, z_n)$ is a local chart such that $p_\alpha\inv (H) = \set{z_1 = 0}$ on that chart, then the bundle at play is the subbundle of $\Omega^1_{X_\alpha}$ generated by $z_1^{N-1} \mathrm dz_1, \, \mathrm dz_2, \ldots, \mathrm dz_n$.
In particular, it agrees with $\Omega^1_{X_\alpha}$ outside of $p_\alpha\inv (H)$.

Now set $\sE_Y \defn \Omegar{(X, \Delta, f)}1 \oplus \O Y \subset \Omega^{[1]}_{Y}\oplus \O Y$, which we should think of as the reflexive pull back of $\sE_X$ by $f$. We equip this sheaf with the usual Higgs field~$\theta_Y$, and denote by $\sE_{Y^\circ}$ its restriction to $Y^\circ$.
Note that by~\lref{adl.2}, $\sE_Y = \Omegar Y1 \oplus \O Y$ holds on $Y \setminus f\inv(H)$.
Let $\big\{ ( V_\beta, q_\beta, Y_\beta ) \big\}_{\beta \in K}$ be a smooth orbi-\'etale (i.e.~quasi-\'etale, in this case) orbi-structure for $(Y^\circ, \emptyset)$, which exists by~\lref{esa.2} and \cref{klt orb} again, at least after shrinking $Y^\circ$.
Set $\sE_{Y_\beta} \defn q_\beta^{[*]} \sE_Y$ and consider the diagram
\begin{equation} \label{CD}
  \begin{tikzcd}
    W_{\alpha\beta} \arrow[rr, "r_{\alpha\beta}"] \arrow[dd, "g_\ab", swap] & & Y_\beta \arrow[d,"q_\beta"] \\
    & & Y^\circ \arrow[d,"f"] \\
    X_\alpha \arrow[rr, "p_\alpha"] & & X^\circ
  \end{tikzcd}
\end{equation}
where $W_{\alpha\beta}$ is the normalization of $X_\alpha\times_{X^\circ} Y_\beta$.
Since $p_\alpha$ is orbi-\'etale with respect to $D^\circ$, the map $r_{\alpha\beta}$ is \'etale over $\reg X^\circ \setminus \supp(D^\circ)$.
Moreover, since $q_\beta$ is quasi-étale, it follows that $f \circ q_\beta$ and $p_\alpha$ ramify to the same order along each component of $D$.
In other words, the smooth orbi-étale orbi-structures $\cC^\circ$ and $\big\{ \big( f(V_\beta), f \circ q_\beta, Y_\beta \big) \big\}$ are compatible.
In particular, $g_\ab$ and $r_\ab$ are étale so that $W_{\alpha\beta}$ is smooth, and we have additionally $g_\ab^* \sE_{X_\alpha} \isom r_\ab^* \sE_{Y_\beta}$ by~\lref{adl.3}.
Since $\sE_{X_\alpha}$ is locally free, so is $\sE_{Y_\beta}$, so that the reflexive sheaf $\sE_{Y^\circ}$ is a genuine orbifold bundle on the orbifold $Y^\circ$.

Let $\omega$ be an orbifold \kahler metric adapted to $(X^\circ, \Delta^\circ)$, as given by \cref{lem:kahler orbi}.
It is defined on an arbitrarily large relatively compact open subset of $X^\circ$.
In particular, it is defined in a neighborhood of $S$ and this will be enough for our purposes.
Set $S^* \defn \reg S \setminus \supp D$.
By definition, one has
\[ \ccorb2{\Omegar{(X, \Delta)}1 \big|_S} = \int_{\reg S \setminus \supp(\Delta)} \cc2{\Omegap{\reg X}1, \omega} = \int_{S^*} \cc2{\Omegap{\reg X}1, \omega} \]
and the last two integrals on the right are well-defined since $\omega$ pulls back to a smooth Kähler metric across points in $S_{\rm sing}\cup \supp(\Delta)$ via the finite maps $h_\alpha$.
The smooth form $p_\alpha^* \, \omega = f_\alpha^* \, h_\alpha^* \, \omega$ is semipositive, degenerate along $p_\alpha^{-1}(H)$.
More precisely, if $p_\alpha^{-1}(H) \cap U = \set{ z_1 = 0 }$ for some coordinate chart $U \subset X_\alpha$, then
\begin{equation*}
\begin{split}
p_\alpha^*\omega|_U = a_{1\bar 1} & |z_1|^{2(N-1)} idz_1\wedge d\bar z_1+\sum_{k=2}^n a_{1\bar k} z_1^{N-1} dz_1 \wedge i d\bar z_k +\\
& + \sum_{k=2}^n a_{k \bar 1} \bar z_1^{N-1} dz_k \wedge i d\bar z_1+ \sum_{j,k=2}^n a_{j\bar k} dz_j\wedge d\bar z_k
\end{split}
\end{equation*}
where $(a_{j\bar k})$ is smooth and definite positive. In particular, $p_\alpha^*\omega$ defines a smooth Hermitian metric on $\Omegar{(X^\circ, \Delta^\circ, p_\alpha)}1$.
Said otherwise, $g_{\alpha\beta}^* \, p_\alpha^* \, \omega$ induces a smooth Hermitian metric on $g_\ab^* \, \Omegar{(X^\circ, \Delta^\circ, p_\alpha)}1 \isom r_\ab^* \, \Omegar{(X^\circ, \Delta^\circ, f\circ q_\beta)}1$.
Hence, $q_\beta^* \, f^* \omega$ is a smooth Hermitian metric on the vector bundle $\Omegar{(X^\circ, \Delta^\circ, f \circ q_\beta)}1 = q_\beta^{[*]} \, \Omegar{(X^\circ, \Delta^\circ, f)}1$, so that $f^*\omega$ induces an orbifold metric on the orbi-bundle $\Omegar{(X^\circ, \Delta^\circ, f)}1$.
By the definition of the Chern classes of orbifold vector bundles, we have
\begin{align*}
\ccorb2{\Omegar{(X^\circ, \Delta^\circ, f)}1 \big|_T} & = \int_{f^{-1}(S^*)} \cc2{\Omega^1_{\reg Y}, f^*\omega} \\
& = \deg(f|_T) \, \cdot \, \int_{S^*} \cc2{\Omega^1_{\reg X}, \omega} \\
& = \deg(f) \, \cdot \,\ccorb2{\Omegar{(X, \Delta)}1 \big|_S}
\end{align*}
where the last identity follows from $\deg(f|_T) = \deg(f)$ since $S$ is general.
All in all, we find by~\cref{lem:c2 restriction surface}
\begin{equation}
\label{mult1}
\ccorb2{\sE_Y} \cdot (f^* A)^{n-2} = \deg(f) \, \ccorb2{\sE_X} \cdot A^{n-2}. 
\end{equation}
The same arguments show the similar identity 
\begin{equation}
\label{mult2}
\cpcorb12{\sE_Y} \cdot (f^* A)^{n-2} = \deg(f) \, \cpcorb12{\sE_X} \cdot A^{n-2}.
\end{equation}

\subsection*{Step~3: $(X, \Delta)$ has quotient singularities}

Consider on $X$ the orbi-Higgs sheaf $(\sF_X, \Theta_X) \defn \End(\sE_X, \theta_X)$.
It satisfies:
\begin{equation*} \label{MY equality}
\cpcorb12{\sF_X} \cdot A^{n-2} = \ccorb2{\sF_X} \cdot A^{n-2} = 0,
\end{equation*}
as follows from the assumption on the Chern classes of $(X, \Delta)$, i.e.~the assumption that equality holds in~\lref{eq:MY}.
Combined with \lref{mult1}--\lref{mult2}, the latter identity implies that the (genuine) Higgs sheaf $(\sF_Y, \Theta_Y) \defn \End(\sE_Y, \theta_Y)$ on $Y$ satisfies
\[ \cpcorb12{\sF_Y} \cdot (f^*A)^{n-2} = \ccorb2{\sF_Y} \cdot (f^*A)^{n-2} = 0. \]
Moreover, by~\cite[Sec.~4.4, proof of Thm.~C]{GT16}, the sheaf $\Omegar{(X, \Delta, f)}1$ is $(f^* A)$-semistable.
Recall that $\cc1{\Omegar{(X, \Delta, f)}1} = f^* A$ by~\cite[(3.11.5)]{ClaudonKebekusTaji21}.
It follows that $(\sE_Y, \theta_Y)$ is $(f^* A)$-Higgs-stable, cf.~the calculations in~\cite[proof of Cor.~7.2]{GKPT15}.
This in turn implies that the endomorphism sheaf $(\sF_Y, \Theta_Y)$ is $(f^* A)$-Higgs-polystable.
Indeed, the last assertion can be deduced from the usual smooth case by restricting to a general complete intersection curve and using the Mehta--Ramanathan theorem for Higgs sheaves~\cite[Thm.~5.22]{GKPT15}.
Cf.~also~\cite[Lemma~4.7]{GKPT20}.

By the Simpson correspondence for klt spaces~\cite[Thm.~5.1]{GKPT20}, the Higgs sheaf $(\sF_Y, \Theta_Y)\big|_{\reg Y}$ is locally free and is induced by a tame, purely imaginary harmonic bundle.
By~\cite[Prop.~3.17]{GKPT20}, the reflexive pull-back $g^{[*]} \sF_Y$ of $\sF_Y$ to a maximally quasi-\'etale cover $g \from Z \to Y$ (whose existence is guaranteed by~\cite[Thm.~1.5]{GKP16}) is locally free.

Now, set $W \defn X \setminus H \subset X$ and $h \defn f \circ g \from Z \to X$.
On $h\inv(W)$, we have that
\[ g^{[*]} \sE_Y \isom g^{[*]} \big( \Omegar Y1 \oplus \O Y \big) \isom \Omegar Z1 \oplus \O Z. \]
It follows that $g^{[*]} \sF_Y \isom \End \!\big( \Omegar Z1 \oplus \O Z \big)$, which contains the tangent sheaf $\T Z$ as a direct summand (again, only on $h\inv(W)$).
Since direct summands of locally free sheaves are locally free by Nakayama's lemma, the resolution of the Lipman--Zariski Conjecture for klt spaces~\cite{GKKP, GrafKovacs, Dru14} implies that $h\inv(W)$ is smooth.

By construction, the map $h\inv(W) \to W$ is branched exactly at $\Delta|_W$.
By \cref{gal closure}, its Galois closure $\wt W \to W$ also has this property, and $\wt W$ is smooth, being a \qe (hence \'etale) cover of the smooth space $h\inv(W)$.
This shows that $(W, \Delta|_W)$ has quotient singularities.
So far, we have only imposed that $H$ is general in its (basepoint-free) linear system.
We can therefore repeat the argument by choosing general elements $H_1, \ldots, H_{n+1} \in |H|$ and conclude that $(X, \Delta)$ has quotient singularities.
This means that $(X, \Delta)$ is a ``complex orbifold'' in the sense of~\cite[p.~109]{BG08}.

\subsection*{Step~4: $(X, \Delta)$ is a ball quotient}

Since $(X, \Delta)$ is a complex orbifold with $K_X + \Delta$ ample, there is an orbifold \kahler--Einstein metric $\omega$ such that $\Ric \om = -\om$, cf.~\cite[Thm.~5.2.2]{BG08}.
Set $X^* \defn \reg X \setminus \supp(\Delta)$, so that $\omega$ is a genuine \kahler metric on $X^*$.
One can compute the orbifold Chern classes using $\omega$, and, in particular, one has from the usual Chern form computations
\begin{align*}
0 &= \big( 2(n + 1) \, \ccorb2{X, \Delta} - n \, \cpcorb12{X, \Delta} \big) \cdot [K_X + \Delta]^{n - 2} \\
  &= \int_{X^*} \big( 2 (n + 1) \cc2{X, \omega} - n \cpc12{X, \omega} \big) \wedge \omega^{n-2} \\
  &=  C_n \int_{X^*} | \Theta^\circ(T_X, \omega) |_\omega^2 \, \omega^{n},
\end{align*}
where $C_n > 0$ is a dimensional constant, while
\[ \Theta^\circ(T_X, \omega) \defn \Theta(T_X, \omega) - \frac 1n \mathrm{tr}_{\mathrm{End}}(\Theta(T_X,\omega)) \cdot \id_{T_X} \]
is the trace-free Chern curvature tensor of $(T_X, \omega)$.

As a result, $\omega$ has constant negative bisectional curvature.
This implies that $\omega$ has negative Riemannian sectional curvature on $X^*$ by e.g.~\cite[\S 2.4.2]{Goldman99}.
(Note that one could also have said that $(X^*, \omega)$ is locally isometric to the complex hyperbolic space $(\B^n, \omega_{\mathrm{hyp}})$ by~\cite[Thm.~6]{Bochner47} and conclude by the usual curvature properties of the complex hyperbolic metric.)

Let $\pi \from \wt X_\Delta \to X$ be the orbifold universal cover of $(X, \Delta)$, cf.~\cref{univ cov}.
By the previous paragraph, $(X, \Delta, \omega)$ is an orbifold of nonpositive Riemannian sectional curvature.
It then follows from~\cite[Cor.~2.16 on p.~603]{BH99} that $(X, \Delta)$ is developable.
Now, $(\wt X_\Delta, \pi^* \omega)$ is a simply connected \kahler manifold with constant negative bisectional curvature, so it is holomorphically isometric to $(\B^n, \omega_{\mathrm{hyp}})$ by~\cite[Thm.~7.9]{KN69II}.
In particular, $\wt X_\Delta \isom \B^n$, proving \cref{MY eq}. \qed

\section{Characterization of ball quotients} \label{sec proof corollary}

In this section, we prove \cref{main3}.
We prove the implications~\lref{main3.1} $\imp$~\lref{main3.2} $\imp$~\lref{main3.3} $\imp$~\lref{main3.1} separately.

\subsection*{\lref{main3.1} $\imp$ \lref{main3.2}}

This is \cref{MY eq}.

\subsection*{\lref{main3.2} $\imp$ \lref{main3.3}}

Let $\pi \from \B^n \to X$ be the orbifold universal cover of $(X, \Delta)$.
(In particular, $(X, \Delta)$ is developable.)
By~\lref{ocp.2}, the map $\pi$ is Galois, with Galois group $\Gamma \isom \piorb{X, \Delta}$.
Note that $\Gamma \subset \Aut(\B^n) = \mathrm{PU}(1, n)$ is a finitely generated linear group.
Furthermore, the stabilizers of the action $\Gamma \acts \B^n$ are finite by~\lref{743}.
By Selberg's lemma~\cite{Alperin87}, there is a finite index normal subgroup $\Gamma' \subset \Gamma$ which is torsion-free.
This implies that $\Gamma'$ acts freely on $\B^n$.
We obtain the following factorization of~$\pi$:
\[ \B^n \xrightarrow{\quad\;\;\quad} \factor{\B^n}{\Gamma'} \xrightarrow{\quad f \quad} \factor{\B^n}{\Gamma} = X, \]
where $f$ is the quotient by the action of the finite group $G \defn \factor{\Gamma}{\Gamma'}$ on the projective manifold $Y \defn \factor{\B^n}{\Gamma'}$.
Since the first map is \'etale, it exhibits $\B^n$ as the universal cover of $Y$.
Combining this with the fact that $\pi$ is branched exactly at $\Delta$, we infer that $f$ is orbi-\'etale.

\subsection*{\lref{main3.3} $\imp$ \lref{main3.1}}

Recall that $K_Y$ is ample and that $Y$ satisfies equality in the Miyaoka--Yau inequality, cf.~e.g.~\cite[(8.8.3)]{Kollar95}.
As $f \from Y \to X$ is orbi-\'etale, it follows that also $K_X + \Delta$ is ample and equality likewise holds in the Miyaoka--Yau inequality for $(X, \Delta)$. \qed

\section{Uniformization of minimal models} \label{sec Q-Cartier}

This section has two (related) purposes:
first, to remove the assumption about the irreducible components of $\Delta$ being \Q-Cartier from \cref{th:MY ineq}.
And second, to prove \cref{T:general type}.

\subsection{Orbifold Miyaoka--Yau inequality}

In \cref{th:MY ineq}, or more generally in~\cite[Thm.~B]{GT16}, the assumption that the $\Delta_i$ be \Q-Cartier can be dropped without replacement.
We give two proofs of this result, the first one relying on~\cite{BCHM} and the second one on~\cref{ex strictly adapted}.

\begin{thm}[Miyaoka--Yau inequality] \label{GT16 B}
Let $(X, \Delta)$ be an $n$-dimensional projective klt pair with standard coefficients, and assume that $K_X + \Delta$ is big and nef.
Then the following inequality holds:
\begin{equation}
\big( 2(n + 1) \, \ccorb2{X, \Delta} - n \, \cpcorb12{X, \Delta} \big) \cdot [K_X + \Delta]^{n - 2} \ge 0.
\end{equation}
\end{thm}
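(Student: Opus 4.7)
The plan is to reduce the statement to the $\mathbb{Q}$-Cartier case already proved in \cite[Thm.~B]{GT16}. I would pursue one of two natural reductions.

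\emph{First route, via BCHM.} I would invoke \cite{BCHM} to produce a small $\mathbb{Q}$-factorialization $\pi \from X' \to X$, and set $\Delta' \defn \pi^{-1}_* \Delta$. Smallness of $\pi$ gives $K_{X'} + \Delta' = \pi^*(K_X + \Delta)$, so $(X', \Delta')$ is klt with big-and-nef log-canonical divisor, and every component of $\Delta'$ is now $\mathbb{Q}$-Cartier by $\mathbb{Q}$-factoriality of $X'$. The original form of \cite[Thm.~B]{GT16} then applies on $X'$, and the remaining task is to identify the two sides of the inequality with the corresponding ones on $X$. The $\cpcorb12{\cdot}$-piece is a power of $[K_X+\Delta]$ and matches by the projection formula. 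For the $\ccorb2{\cdot}$-piece I would use Mumford's construction (\cref{subsec:Mumford}): by \cref{klt orb}, both $(X, \Delta)$ and $(X', \Delta')$ carry smooth orbi-\'etale orbi-structures outside closed subsets of codimension $\ge 3$; since $\pi$ is an isomorphism in codimension one, these orbi-structures can be chosen compatibly, so that the classes $\cc2{\Omegar{\cC}1}$ on $X$ and $X'$ are identified on a common big open, and the intersection numbers coincide.

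\emph{Second route, via \cref{ex strictly adapted}.} In \cite{GT16} the $\mathbb{Q}$-Cartier hypothesis on the $\Delta_i$ was used only to construct a strictly adapted cover $f \from Y \to X$ for $(X, \Delta)$. \cref{ex strictly adapted} now produces such a cover unconditionally, at the cost of extra ramification along a general very ample hypersurface $H$, with $f$ orbi-\'etale for $\bigl( X, \Delta + ( 1 - \tfrac{1}{N} ) H \bigr)$. With this cover in hand I would re-run the Higgs-semistability and Bogomolov--Gieseker argument of \cite[\S 4--5]{GT16} applied to the orbi-Higgs sheaf $\Omegar{(X, \Delta, f)}1 \oplus \O Y$; the multiplicativity identities \lref{mult1}--\lref{mult2} derived in \cref{sec MY eq} then translate the resulting inequality on $Y$ back to the desired inequality on $X$.

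\emph{The main obstacle.} In either route the analytic engine --- Bogomolov--Gieseker for semistable Higgs sheaves --- is already present in \cite{GT16}; the real work is to verify that the orbifold Chern intersection numbers are insensitive to the auxiliary construction. For the BCHM route, the hard part will be the careful comparison of Mumford's orbi-Chern classes under the small modification $\pi$, which reduces to the codimension bound of \cref{klt orb} once one chooses the two orbi-structures compatibly. For the covering route, the hard part will be showing that the extra ramification of $f$ along $H$ does not pollute the final Chern computation, which should follow by working throughout with $\Omegar{(X, \Delta, f)}1$ in place of $\Omegar Y 1$, mirroring Step~2 in the proof of \cref{MY eq}.
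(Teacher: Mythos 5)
Your proposal matches the paper's own treatment: the paper gives precisely these two proofs, namely a small \(\Q\)-factorialization via \cite{BCHM} followed by the projection formula (using that \(f(\Exc(f))\) has codimension \(\ge 3\) in \(X\), so \(\ccorb2{X',\Delta'}\) pushes forward to \(\ccorb2{X,\Delta}\) as homology classes), and the observation that \cref{ex strictly adapted} supplies the strictly adapted cover with extra ramification along a general very ample divisor, after which the proof of \cite[Thm.~B]{GT16} applies verbatim. The only point to sharpen in your first route is that a ``common big open'' (codimension-two complement) would not suffice for the \(\ccorb2{\cdot}\)-comparison; what makes it work is that smallness forces the image of the exceptional locus to have codimension \(\ge 3\) in \(X\), exactly as the paper notes.
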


\begin{proof}[First proof]
Consider a \Q-factorialization $f \from X' \to X$, cf.~\cite[Cor.~1.4.3]{BCHM} applied with $\fE = \emptyset$.
Set $\Delta' \defn f\inv_* \Delta$.
The map $f$ is small, meaning that $\Exc(f) \subset X'$ has codimension at least two.
Therefore $(X', \Delta')$ reproduces all the assumptions made on $(X, \Delta)$, and in addition $X'$ is \Q-factorial.
In particular, $K_{X'} + \Delta' = f^* ( K_X + \Delta )$ is big and nef.
Furthermore, $f(\Exc(f)) \subset X$ has codimension $\ge 3$, therefore $f_* \big( \ccorb2{X', \Delta'} \! \big) = \ccorb2{X, \Delta}$ as homology classes, and likewise for $\cpcorb12{X', \Delta'}$ (cf.~\cref{c2 homology}).
By the projection formula, we obtain
\begin{equation*}
\begin{split}
\big( 2(n + 1) \, & \ccorb2{X, \Delta} - n \, \cpcorb12{X, \Delta} \big) \cdot [K_X + \Delta]^{n - 2} = \\
& \big( 2(n + 1) \, \ccorb2{X', \Delta'} - n \, \cpcorb12{X', \Delta'} \big) \cdot [K_{X'} + \Delta']^{n - 2}.
\end{split}
\end{equation*}
The right-hand side is non-negative by~\cite[Thm.~B]{GT16}.
\end{proof}

\begin{proof}[Second proof]
Observe that in~\cite{GT16}, the assumption that the $\Delta_i$ be \Q-Cartier is only used in order to construct a strictly adapted morphism whose extra ramification is supported on a general very ample divisor (cf.~Ex.~2.11 of that paper).
However, using \cref{ex strictly adapted} we can construct such a cover even without that assumption.
After that, the proof of~\cite[Thm.~B]{GT16} applies verbatim.
\end{proof}

\subsection{Uniformization of minimal models}

In order to prove \cref{T:general type}, we use the strategy explained in~\cite[Step~1, p.~1086]{GKPT20}.
This means we first have to prove the following lemma.

\begin{lem} \label{can mod eq}
In the setting of \cref{T:general type}, the canonical model $(X_\canmod, \Delta_\canmod)$ also satisfies equality in~\lref{eq:MY}.
\end{lem}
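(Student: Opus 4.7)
The plan is to invoke the canonical model morphism $\varphi\colon X\to X_\canmod$, a birational contraction produced by the base-point-free theorem applied to the big and nef divisor $K_X+\Delta$.  By construction $\varphi$ is crepant: $\varphi^*(K_{X_\canmod}+\Delta_\canmod)=K_X+\Delta$ and $\Delta_\canmod=\varphi_*\Delta$.  The strategy is then to transfer the two orbifold intersection numbers appearing in~\lref{eq:MY} through $\varphi$ separately.

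The $\cpcorb12$-contribution is essentially trivial.  Since $\ccorb1{X,\Delta}$ is represented by $K_X+\Delta$ (cf.~\cref{def:orbi c1}), one has
\[\cpcorb12{X,\Delta}\cdot[K_X+\Delta]^{n-2}=(K_X+\Delta)^n=(K_{X_\canmod}+\Delta_\canmod)^n=\cpcorb12{X_\canmod,\Delta_\canmod}\cdot[K_{X_\canmod}+\Delta_\canmod]^{n-2},\]
the middle equality being the projection formula applied to the crepant relation.

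The substantive point is the identity $\ccorb2{X,\Delta}\cdot[K_X+\Delta]^{n-2}=\ccorb2{X_\canmod,\Delta_\canmod}\cdot[K_{X_\canmod}+\Delta_\canmod]^{n-2}$.  For this I would cut down to surfaces via~\cref{lem:c2 restriction surface}.  Fix $m\gg 1$ so that $L\defn m(K_{X_\canmod}+\Delta_\canmod)$ is very ample, and choose a general complete intersection surface $S=D_1\cap\cdots\cap D_{n-2}$ with $D_i\in|L|$.  By Bertini and codimension counts --- the non-orbifold locus of $(X_\canmod,\Delta_\canmod)$ has codimension $\ge 3$ by~\cref{klt orb}, while $\varphi(\Exc\varphi)\subset X_\canmod$ has codimension $\ge 2$ --- one can arrange $S$ to lie in the orbifold locus of $(X_\canmod,\Delta_\canmod)$ and to meet $\varphi(\Exc\varphi)$ transversally in finitely many points.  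Its preimage $S'\defn\varphi^{-1}(S)\subset X$ is then a klt surface with quotient singularities (by~\cref{surface}, applied after a small $\Q$-factorialization if needed), and adjunction turns $\varphi|_{S'}\colon S'\to S$ into a crepant birational morphism of klt surfaces with standard coefficients.  Applying~\cref{lem:c2 restriction surface} on both sides reduces the identity to the two-dimensional statement that crepant birational morphisms of klt surfaces with standard boundary preserve the orbifold second Chern number --- a classical fact obtained on a common log resolution by using the discrepancy relations along the tree of exceptional rational curves of the intervening quotient singularities.

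The main obstacle is this surface-level reduction, with two subtleties.  First, \cref{lem:c2 restriction surface} is stated for an ample $L$ on the base space, whereas on the $X$-side only the big and nef pullback $\varphi^*L$ is at hand; this can be dealt with by pre-composing with a small $\Q$-factorialization (as in the first proof of~\cref{GT16 B}), which preserves the canonical model as well as all orbifold Chern numbers involved, and then evaluating the Chern--Weil representative of $\ccorb2{X,\Delta}$ directly on $S'$ along the lines of Step~2 in the proof of~\cref{MY eq}.  Second, the 2-dimensional crepant invariance of $\ccorb2$ requires tracking both the discrepancy contributions and the standard boundary coefficients, which is done by an explicit computation on the minimal resolution of each quotient singularity of $S$ and $S'$.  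Combining the resulting identity with the $\cpcorb12$ equality above yields equality in~\lref{eq:MY} for $(X_\canmod,\Delta_\canmod)$.
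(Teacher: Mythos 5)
Your reduction framework matches the paper's: the $\cpcorb12$-term is handled by crepancy exactly as in the paper, and the cutting-down to a general surface $\Sigma \subset X_\canmod$ in $|m(K_{X_\canmod}+\Delta_\canmod)|$ with preimage $S = \pi\inv(\Sigma)$, giving a crepant morphism of orbifold surfaces, is also the paper's setup. The gap is in the key surface-level input. The ``classical fact'' you invoke --- that crepant birational morphisms of klt surface pairs with standard boundary preserve the orbifold second Chern number --- is false. The orbifold $c_2$ (which for surfaces with quotient singularities is the orbifold Euler number) strictly \emph{drops} under any nontrivial crepant contraction: contracting a single $(-2)$-curve $C$ on a minimal smooth surface of general type to an $A_1$-point replaces the contribution $e(C)=2$ by $1/|G| = 1/2$, so $\ccorb2{\,\cdot\,}$ decreases by $3/2$ while $\cpcorb12{\,\cdot\,}$ is unchanged. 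No amount of discrepancy bookkeeping on a common resolution can reverse this; the correct general statement is one-sided, namely Megyesi's singular Bogomolov--Miyaoka--Yau inequality, which gives only
\[
\ccorb2{\Sigma, \Delta_\canmod} \;\le\; \ccorb2{S, \Delta},
\]
i.e.~the Miyaoka--Yau defect can only go down when passing to the canonical model.

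Consequently your claimed equality of the two $c_2$-intersection numbers cannot be proved the way you propose, and your argument as written has no mechanism to conclude. The paper closes the loop differently: the one-sided inequality above (transported back to $X$ via the normal-bundle sequences for $S \subset X$ and $\Sigma \subset X_\canmod$, whose normal bundles correspond under $\pi^*$) shows that the Miyaoka--Yau expression for $(X_\canmod, \Delta_\canmod)$ is $\le$ the one for $(X,\Delta)$, hence $\le 0$ by hypothesis; equality then follows because \cref{GT16 B} bounds it from below by $0$. This is precisely where the \Q-Cartier-free version of the Miyaoka--Yau inequality is needed, since the components of $\Delta_\canmod$ need not be \Q-Cartier (cf.~\cref{qcartier}). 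Your proposal never invokes the Miyaoka--Yau inequality on the canonical model, so even after replacing the false invariance claim by Megyesi's inequality you would still be missing the lower bound; the a posteriori equality of the $c_2$-numbers is a \emph{consequence} of the lemma, not an ingredient available to prove it.
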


Assuming \cref{can mod eq} for the moment, we then apply \cref{MY eq} on $(X_\canmod, \Delta_\canmod)$ to conclude.
This finishes the proof of \cref{T:general type}.

\begin{rem} \label{qcartier}
If we had proved \cref{MY eq} only in the setting of~\cite{GT16} (that is, assuming that the $\Delta_i$ are \Q-Cartier), then the above argument would break down.
This is because the irreducible components of $\Delta_\canmod$ may not be \Q-Cartier (even if the same is true of $\Delta$).
\end{rem}

\begin{proof}[Proof of \cref{can mod eq}]
As in the statement of \cref{T:general type}, let $(X_\canmod, \Delta_\canmod)$ denote the canonical model of the pair $(X, \Delta)$ and $\pi \from (X, \Delta) \to (X_\canmod, \Delta_\canmod)$ the canonical morphism ($K_X + \Delta$ being big and nef, some multiple is basepoint-free and so $\pi$ is a morphism).
By construction, $K_{X_\canmod} + \Delta_\canmod$ is ample and $\pi$ is crepant:
\begin{equation} \label{eq:crepant}
K_X + \Delta = \pi^* \big( K_{X_\canmod} + \Delta_\canmod \big).
\end{equation}
The pair $(X_\canmod, \Delta_\canmod)$ still has klt singularities.
From \cref{th:MY ineq}, we know that the inequality~\lref{eq:MY} holds for $(X_\canmod, \Delta_\canmod)$ and we are led to checking that:
\begin{equation} \label{eq:MY can model}
\begin{split}
\big( 2(n + 1) \, &\ccorb2{X, \Delta} - n \, \cpcorb12{X, \Delta} \big) \cdot [K_X + \Delta]^{n - 2} \ge \\
& \big( 2(n + 1) \, \ccorb2{X_\canmod, \Delta_\canmod} - n \, \cpcorb12{X_\canmod, \Delta_\canmod} \big) \cdot [K_{X_\canmod} + \Delta_\canmod]^{n - 2}.
\end{split}
\end{equation}
In view of~\lref{eq:crepant}, this amounts to showing
\begin{equation} \label{eq:c_2 can model}
\ccorb2{X, \Delta} \cdot [K_X + \Delta]^{n - 2} \ge
\ccorb2{X_\canmod, \Delta_\canmod} \cdot [K_{X_\canmod} + \Delta_\canmod]^{n - 2}.
\end{equation}
At this point, let us consider a general surface $\Sigma \subset X_\canmod$ cut out by the linear system $\vert m ( K_{X_\canmod} + \Delta_\canmod ) \vert$ (for $m \gg 1$ sufficiently divisible) and let us look at its preimage $S \defn \pi\inv(\Sigma) \subset X$ in~$X$.
The pairs\footnote{To avoid cumbersome notation, the restriction of the divisors $\Delta$ and $\Delta_\canmod$ to $S$ and $\Sigma$ is not written out.} $(S, \Delta)$ and $(\Sigma, \Delta_\canmod)$ are orbifold surfaces and contained in the orbifold loci of $(X, \Delta)$ and $(X_\canmod, \Delta_\canmod)$ respectively.
Obviously, $(\Sigma, \Delta_\canmod)$ is nothing but $(S, \Delta)_\canmod$ and we can apply~\cite[Thm.~4.2]{Megyesi}.
This yields
\[ 4 \, \ccorb2{\Sigma, \Delta_\canmod} - \cpcorb12{\Sigma, \Delta_\canmod} \le 4 \, \ccorb2{S, \Delta} - \cpcorb12{S, \Delta}. \]
The morphism $\pi|_S \from (S, \Delta) \to (\Sigma, \Delta_\canmod)$ being crepant, the above inequality reads as
\begin{equation} \label{eq:megyesi}
\ccorb2{\Sigma, \Delta_\canmod} \le \ccorb2{S, \Delta}.
\end{equation}
With the notation introduced, the inequality~\lref{eq:c_2 can model} boils down to the following:
\[ \ccorb2{ \cT_{(X, \Delta)} \big|_S } \ge \ccorb2{ \cT_{(X_\canmod, \Delta_\canmod)} \big|_\Sigma }. \]
This last inequality can be checked as in~\cite[pp.~1086--1087]{GKPT20} by considering the (orbifold) normal sequences
\begin{align}
0 \lto \cT_{(S, \Delta)} \lto & \cT_{(X, \Delta)} \big|_S \lto \cN_{(S, \Delta) \mid (X, \Delta)} \lto 0, \label{eq:normal seq1} \\
0 \lto \cT_{(\Sigma, \Delta_\canmod)} \lto & \cT_{(X_\canmod, \Delta_\canmod)} \big|_\Sigma \lto \cN_{(\Sigma, \Delta_\canmod) \mid (X_\canmod, \Delta_\canmod)} \lto 0. \label{eq:normal seq2}
\end{align}
It is worth noting that both sequences~\lref{eq:normal seq1} and~\lref{eq:normal seq2} are exact sequences of orbifold vector bundles, since the surface $S$ (resp.~$\Sigma$) is contained in the orbifold locus of $(X, \Delta)$ (resp.~$(X_\canmod, \Delta_\canmod)$) and the terms in the middle are thus genuine orbifold bundles.
Now it is enough to remark that the normal bundles $\cN_{(S, \Delta) \mid (X, \Delta)}$ and $\cN_{(\Sigma, \Delta_\canmod) \mid (X_\canmod, \Delta_\canmod)}$ satisfy
\begin{equation} \label{eq:normal pull back}
\cN_{(S, \Delta) \mid (X, \Delta)} \isom \pi^* \! \left( \cN_{(\Sigma, \Delta_\canmod) \mid (X_\canmod, \Delta_\canmod)} \right).
\end{equation}
Together with~\lref{eq:crepant} and~\lref{eq:megyesi}, this finally proves that the inequality~\lref{eq:c_2 can model} holds true.
This concludes the proof of \cref{can mod eq}.
\end{proof}

\begin{rem-plain}
In general, the canonical morphism $\pi|_S \from (S, \Delta) \to (\Sigma, \Delta_\canmod)$ is \emph{not} an orbifold morphism, but the normal bundles are actual locally free sheaves defined on $S$ (resp.~on $\Sigma$) and not only on the orbifold $(S, \Delta)$ (resp.~$(\Sigma, \Delta_\canmod)$).
The Chern classes of $\cN_{(\Sigma, \Delta_\canmod) \mid (X_\canmod, \Delta_\canmod)}$ thus come from $\Sigma$ and can be pulled back to $S$ in the usual way.
\end{rem-plain}

\section{Characterization of torus quotients} \label{sec flat case}

In this final section, we first establish the positivity of the orbifold second Chern class for Calabi--Yau and for irreducible holomorphic symplectic varieties.
Using the Decomposition Theorem~\cite{BakkerGuenanciaLehn20}, we can then easily deduce \cref{th:MY singular Kahler} and \cref{th:torus quotients}.
Finally, we prove \cref{th:characterization torus quotient}.

\subsection{Positivity of the second Chern class --- the projective case}

If $X$ is projective, then we know that it has an orbi-resolution in the sense of \cref{defi:orbi resolution}, and we can use this to understand the orbifold second Chern class of $X$.

\begin{prop} \label{cy}
Let $X$ be a projective irreducible Calabi--Yau (resp. irreducible holomorphic symplectic) variety of dimension $n$ with klt singularities and let $\beta \in \HH2.X.\R.$ be a \kahler class.
Then we have
\[ \ccorb2X \cdot \beta^{n-2} > 0. \]
\end{prop}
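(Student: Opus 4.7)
The plan is to combine the Bogomolov--Gieseker inequality for $\beta$-stable reflexive sheaves on projective klt spaces with an equality-case analysis that excludes the degenerate situation using the irreducibility hypothesis.

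First I would verify that the reflexive tangent sheaf $\T X$ is $\beta$-stable with $\cc1{\T X} = 0$ in $\HH2.X.\R.$. The vanishing of $c_1$ is immediate from $\O X(K_X)\isom\O X$. Stability follows from the irreducibility hypothesis together with the singular Beauville--Bogomolov decomposition theorem~\cite{BakkerGuenanciaLehn20}: the restricted holonomy of the singular Ricci-flat \KE metric carried by $X$ equals $\mathrm{SU}(n)$ in the Calabi--Yau case and $\mathrm{Sp}(n/2)$ in the IHS case, each of which acts irreducibly on the tangent representation. Hence $\T X$ admits no proper $\beta$-destabilizing saturated subsheaf, for such a subsheaf would produce a non-trivial holonomy-invariant subspace.

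Next, applying the orbifold Bogomolov--Gieseker inequality for $\beta$-stable reflexive sheaves on projective klt spaces (in the spirit of~\cite{GKPT15,GKPT20}) and using $\cpcorb12X \cdot \beta^{n-2} = 0$, one obtains the non-strict inequality
\[
\ccorb2X \cdot \beta^{n-2} \geq 0.
\]

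To upgrade this to the strict inequality, I would argue by contradiction: suppose equality holds. Restricting to a general complete intersection surface $S$ of $(n-2)$ hypersurfaces in a sufficiently ample linear system (using \cref{lem:c2 restriction surface} together with \cref{surface} to ensure $S$ sits inside the orbifold locus and carries a smooth orbi-\'etale structure), the equality case of Bogomolov--Gieseker on the klt orbifold surface $(S,\emptyset)$ forces $\T X|_S$ to be projectively flat Hermite--Einstein. Combined with $c_1=0$, this means $\T X$ itself is flat on the orbifold locus of $X$, so the restricted holonomy of the Ricci-flat \KE metric is trivial on a big open subset. By the singular Beauville--Bogomolov decomposition~\cite{BakkerGuenanciaLehn20}, this would force some finite \qe cover of $X$ to be a complex torus, contradicting the hypothesis that $X$ is irreducible Calabi--Yau or IHS (whose holonomies are non-trivial for $n\geq 2$). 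The main obstacle is controlling the equality case: one must ensure that the Bogomolov--Gieseker equality propagates from the surface slice back to flatness of $\T X$ on the orbifold locus of $X$, for which the orbifold Mehta--Ramanathan type restriction theorem (cf.~\cite[Thm.~5.22]{GKPT15}) and the characterization of equality for Higgs-semistable reflexive sheaves on klt spaces are the key ingredients.
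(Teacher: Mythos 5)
The central gap is in your second step. The Bogomolov--Gieseker-type inequalities of \cite{GKPT15,GKPT20} are established for polarizations by ample (or at least algebraic) classes: their proofs run through Mumford-style $\Q$-Chern classes and restriction to general complete intersection surfaces in a linear system, a tool that simply does not exist for a transcendental K\"ahler class $\beta$. Since $\beta$ transcendental is precisely one of the difficulties this statement is meant to handle, you cannot quote such an inequality off the shelf; some substitute for it is the real work. The paper's mechanism is analytic: pass to an orbi-resolution $\pi \from \wh X \to X$ (\cref{defi:orbi resolution}, existence by \cite{LiTian19} since $X$ is projective), note that stability of $\T X$ with respect to $\beta$ (from the Bochner principle \cite[Thm.~A]{CGGN}; your holonomy argument establishes the same fact) implies stability of $T_{\wh X}$ with respect to the perturbed K\"ahler classes $\pi^*\beta + \ep\wh\beta$, invoke the orbifold Hermite--Einstein metrics of \cite[Thm.~4.2]{EyssidieuxSala} for $\omega_\ep = \pi^*\omega + \ep\wh\omega$, and compute $\ccorb2X\cdot\beta^{n-2}$ as the $\ep \to 0$ limit of orbifold Chern--Weil integrals via \cref{same c2}; non-negativity then follows from the pointwise curvature inequality as in \cite[Prop.~3.11]{CGG}. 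Without this (or an equivalent) input, your inequality $\ccorb2X\cdot\beta^{n-2} \ge 0$ is unproved.

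Your equality-case analysis has a related mismatch: the hypothetical equality holds only for the class $\beta$, which may be transcendental, yet you cut down by a complete intersection surface $S$ in a sufficiently ample linear system. Such a surface computes intersection numbers against powers of the ample class, not against $\beta^{n-2}$, so the Bogomolov--Gieseker equality on $(S,\emptyset)$ is not what your hypothesis delivers, and \cref{lem:c2 restriction surface} does not bridge this. The missing step --- which in the paper falls out of the analytic argument just described --- is that vanishing of $\ccorb2X\cdot\beta^{n-2}$ for one K\"ahler class forces $\ccorb2X\cdot\gamma^{n-2} = 0$ for \emph{every} K\"ahler class $\gamma$, in particular for $\gamma = \cc1H$ with $H$ ample. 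Once that reduction is available, the contradiction is much shorter than your flatness/holonomy analysis: by \cite{LuTaji18}, vanishing against an ample polarization already forces $X$ to be a quasi-\'etale quotient of an abelian variety, contradicting the hypothesis that $X$ is irreducible Calabi--Yau or irreducible holomorphic symplectic. Your projective-flatness and Mehta--Ramanathan route could plausibly be carried out for the ample class, but only after the transcendental-to-ample reduction, which is the genuinely missing idea in your proposal.
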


\begin{proof}
Let $\pi \from \wh X \to X$ be an orbi-resolution, whose existence is garanteed by \cite{LiTian19} since $X$ is projective. Let $\wh \beta$ be a \kahler class on $\wh X$ and let $\omega \in \beta$ (resp. $ \wh \omega \in \wh \beta$) be a \kahler form. 
 Recall that it follows easily from the Bochner principle \cite[Thm.~A]{CGGN}  that $T_X$ is stable with respect to $\beta$. This implies that $T_{\wh X}$ is stable with respect to $\pi^*\beta$, hence $T_{\wh X}$ is stable with respect to $\pi^*\beta+\ep\wh \beta$ for $\ep>0$ small enough, cf e.g. \cite[Prop.~3.4]{CGG}. In particular, as explained in \cite[Thm.~4.2]{EyssidieuxSala}, there exists an orbifold Hermite--Einstein metrics $h_\ep$ on $T_{\wh X}$ with respect to $\omega_\ep  \defn  \pi^*\omega + \ep \wh \omega$. From \cref{same c2}, we have
\[\ccorb2{X}\cdot \beta^{n-2}= \lim_{\ep \to 0} \int_{\wh X}\cpc2{\mathrm{orb}}{T_{\wh X}, h_\ep} \wedge \omega_\ep^{n-2}. \]
The exact same arguments as in \cite[Prop.~3.11]{CGG} using orbifold forms instead of usual forms shows that the latter quantity is non-negative, and if it is zero, then we have $\ccorb2{X}\cdot \gamma^{n-2}=0$ for \emph{any} \kahler class $\gamma$ on $X$. We claim that this cannot happen. Indeed, since $X$ is projective, this applies to classes of the form $\cc1{H}$ for an ample divisor $H$ on $X$. Then \cite{LuTaji18} would imply that $X$ is the quotient of an Abelian variety, clearly a contradiction.
\end{proof}

\subsection{Positivity of the second Chern class --- the IHS case}

We will derive the general \kahler case from the projective one using a deformation argument, as in~\cite[Prop.~4.4]{CGG}.

\begin{prop} \label{ihs}
Let $X$ be an irreducible holomorphic symplectic variety of dimension $n$ with klt singularities and let $\beta \in \HH2.X.\R.$ be a \kahler class.
Then we have 
\[ \ccorb2X \cdot \beta^{n-2} > 0. \]
\end{prop}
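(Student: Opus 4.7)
The plan is to deduce this from the projective case of \cref{cy} by a locally trivial deformation argument, in the spirit of \cite[Prop.~4.4]{CGG}. By the deformation theory of \cite{BakkerGuenanciaLehn20} for klt IHS varieties, $X$ sits at the origin of a smooth versal locally trivial deformation $\pi \from \cX \to B$; the Beauville--Bogomolov--Fujiki form extends flatly, local Torelli holds, and local triviality makes the singularities and their orbifold structure analytically constant along the family.

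First, I would trivialize the local system $R^2\pi_*\R$ over a simply-connected neighborhood $U$ of $0 \in B$ and let $\beta_t$ denote the flat section with $\beta_0 = \beta$. Openness of the Kähler cone in locally trivial IHS families (again from \cite{BakkerGuenanciaLehn20}) implies that $\beta_t$ remains Kähler for $t$ in a smaller neighborhood $U' \subset U$. Since projective periods are dense in the period domain of IHS varieties --- a standard consequence of the signature of the BBF form --- I can pick $t_0 \in U'$ such that $X_{t_0}$ is projective, and \cref{cy} then yields
\[ \ccorb2{X_{t_0}} \cdot \beta_{t_0}^{n-2} > 0. \]

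To transfer this positivity back to $t=0$, one needs local constancy of $\ccorb2{X_t} \cdot \beta_t^{n-2}$ along $\pi$. Since $\beta_t$ is flat by construction, the task reduces to showing that $\ccorb2{X_t}$ is flat as a section of $(R^{2n-4}\pi_*\R)^\vee$. Intuitively this is clear: local triviality identifies the orbifold structures of $X_t$ and $X_0$ analytically, and $\ccorb2{\cdot}$ is topological in nature. A rigorous argument should proceed via \cref{same c2}, by passing to a relative orbi-resolution of $\pi$ and reducing to the well-known constancy of orbifold Chern classes along smooth families.

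The main obstacle I foresee is this last step, namely constructing a relative orbi-resolution for the locally trivial family $\pi \from \cX \to B$. Absolute orbi-resolutions are provided by \cite[\S 3]{LiTian19} via stack-theoretic methods, but the construction is non-canonical and a parametric extension requires extra care. If this route turns out to be delicate, an alternative is to work directly with the Chern--Weil presentation of $\ccorb2{\cdot}$ from \cref{subsec general kahler c2} and verify the continuity statement by hand on locally trivial trivializations of $\cX$, using that the non-orbifold locus is analytic (\cref{lem:orbifold locus analytic}) and itself deforms trivially in the family.
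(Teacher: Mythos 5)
Your overall strategy coincides with the paper's: transfer positivity from a nearby projective fiber (where \cref{cy} applies) through a locally trivial deformation, using deformation-invariance of the orbifold second Chern class paired against flat classes. But the step you yourself flag as the crux is exactly where the proposal has a genuine gap, and your primary suggested fix does not work. A relative (or even fiberwise) orbi-resolution is not available here: the existence result of \cite[\S 3]{LiTian19} is only for quasi-projective varieties, and the general fibers $X_t$ of the locally trivial family are non-projective compact \kahler spaces, so you cannot reduce via \cref{same c2} to constancy of orbifold Chern classes on a resolved family. The paper's proof instead carries out, in detail, what you sketch only as a fallback: it constructs a $\mathcal C^\infty$ trivialization $F$ of the family (embedding $\fX$ in $\C^N$ via \cite{ABT79}, gluing local holomorphic trivializing vector fields with a partition of unity, and applying Thom's first isotopy lemma) and then proves two non-obvious properties --- that the flow preserves the orbifold locus (here \cref{lem:orbifold locus analytic} and local triviality enter, as you anticipated), and that the resulting diffeomorphisms $X_t^{\orb} \to X_0^{\orb}$ are smooth \emph{in the orbifold sense}, which is established by lifting the flow to local quasi-\'etale smooth covers and using uniqueness of flows. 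Without this orbifold-smooth trivialization you cannot compare the Chern--Weil representatives $\chorb2{X_t^{\orb}, \cdot}$ on different fibers, so the claimed flatness of $\ccorb2{X_t}$ in $(R^{2n-4}\pi_*\R)\dual$ remains unproved; ``$\ccorb2{}$ is topological in nature'' is precisely the point at issue, since the class lives only on the orbifold locus and a priori depends on the orbifold structure there.

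A secondary difference worth noting: the paper first upgrades the deformation invariance to a Fujiki-type relation $\ccorb2X \cdot a = C_X\, q_X(a)^{n/2-1}$ with $C_X$ a locally trivial deformation invariant. This lets one conclude from \emph{any} projective fiber (using an ample class there, via \cref{cy} and \cite[Cor.~1.3]{BL18}, \cite[Cor.~3.10]{BakkerGuenanciaLehn20}), whereas your route transports the specific class $\beta$ and therefore additionally needs that its flat transport is still \kahler at a projective point close to $0$ --- i.e.\ openness of K\"ahlerness for flat sections in locally trivial families of singular spaces on top of density of projective fibers. That ingredient is plausible but is an extra input you would have to justify; the Fujiki relation sidesteps it.
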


\begin{proof}
We will first prove that there exists a constant $C_X\in \mathbb R$ such that 
\begin{equation} \label{formula c2}
\ccorb2{X}\cdot a = C_X q_X(a)^{\frac n2-1}
\end{equation}
for any $a \in \HH2.X.\R.$, where $q_X \from \HH2.X.\R.\to \C$ is the Beauville--Bogomolov--Fujiki quadratic form.
Moreover, we will see that $C_X$ is constant when $X$ moves in a locally trivial family. 

The result follows from standard arguments (see e.g. \cite[Prop.~4.4]{CGG} and references therein) once one has proved that the formation of $\ccorb2{X} \cdot a$ is invariant under parallel transport along a locally trivial deformation, which we now prove. 

Let $\pi\from \mathfrak X \to \mathbb D$ be a proper surjective map which is a locally trivial deformation of $X=\pi^{-1}(0)$.
We denote by $\mathfrak X^{\rm orb}$ (resp. $X_t^{\rm orb}$) the orbifold locus of $\mathfrak X$ (resp. $X_t$), which is a Zariski open subset of $\mathfrak X$ (resp. $X_t$) according to \cref{lem:orbifold locus analytic}.
Next, we set $Z \defn \mathfrak X \setminus \mathfrak X^{\rm orb}$ and $Z_t=Z\cap X_t$.
The family being locally trivial, we infer that $\mathfrak X^{\rm orb}\cap X_t=X_t^\mathrm{orb}$ and thus that $Z_t=X_t\setminus X_t^\mathrm{orb}$.

\begin{claim}
\label{claim}
Up to shrinking $\mathbb D$, there exists a $\mathcal C^\infty$ diffeomorphism $F: \mathfrak X \to X_0\times \mathbb D$ commuting with the projection to $\mathbb D$ such that
\begin{enumerate}[label=(\roman*)]
\item\label{flow.1}  $F$ preserves the orbifold locus, i.e. $F(X_t^{\rm orb})= X_0^{\rm orb}\times \{t\}$.
\item\label{flow.2} $F|_{X_t^{\rm orb}}\from X_t^{\rm orb} \to X_0^{\rm orb}$ is smooth in the orbifold sense.
\end{enumerate}
\end{claim}
In this singular context, we mean that $F$ is the restriction of a smooth map under local embeddings in $\mathbb C^N$ which induces an homeomorphism between $\mathfrak X$ and  $ X_0\times \mathbb D$.

\begin{proof}[Proof of \cref{claim}]
Let us start with the existence of the diffeomorphism $F$. To do so, one can find a proper $\mathcal C^\infty$ embedding $\iota \from \mathfrak X \hookrightarrow \mathbb C^N$ thanks to \cite{ABT79}. Next,  extend $\pi$ smoothly to a smooth map $f$ with support in a neighborhood of $\iota(X)$. Since $\pi:\mathfrak X\to \mathbb D$ is locally trivial, one can stratify $\mathfrak X$ such that the restriction of $\pi$ to each stratum is proper and smooth (in the analytic sense, i.e. it is a submersion). The existence of $F$ then follows from Thom's first isotopy lemma, cf \cite[Prop.~11.1]{Mather70}.

In order to prove the two items in the claim, let us briefly recall the construction of $F$ in \emph{loc.~cit.} while emphasizing on the important points for our purposes. Start with local holomorphic trivializations $g_\alpha: U_\alpha  \to (U_\alpha \cap X_0)\times \mathbb D$ for a covering of analytic open sets $(U_\alpha)_{\alpha\in A}$ of $\mathfrak X$, and let $Z=\sqcup Z^{(k)}$ be the standard stratification of the analytic set $Z\subset \mathfrak X$.
The maps $g_\alpha$ induces a local biholomorphism between $Z^{(k)}$ and $Z_0^{(k)}\times \mathbb D$ for all $k$; in particular the holomorphic vector fields $v_\alpha \defn g_\alpha^* \frac{\partial}{\partial t}$ satisfy
\[v_\alpha\big|_{Z^{(k)}} \in \HH0.Z^{(k)}.\T {Z^{(k)}}.\]

Next, let $(\chi_\alpha)$ be a partition of unity subordinate to the open cover $(U_\alpha)_{\alpha\in A}$.
The $\mathcal C^\infty$ vector field $v \defn \sum \chi_\alpha v_\alpha$ still satisfies 
\[v|_{Z^{(k)}} \in \mathcal C^\infty(Z^{(k)}, T_{Z^{(k)}}).\]
As showed in \cite{Mather70}, its flow $(F_t)$ is well-defined over $\pi^{-1}(\mathbb D_{1/2})$ for $|t|<1/2$, and it preserves $Z^{(k)}$ for all $k$, hence it preserves $Z$ as well. Equivalently, the flow of $v$ preserves $\mathfrak X^{\rm orb}$, which proves~\lref{flow.1}.

Moreover, $v|_{\mathfrak{X^{\rm orb}}}$ is smooth in the orbifold sense (i.e.~when pulled back to the local smooth covers), a property which need not be true for arbitrary vector fields.
This is straightforward since the $v_\alpha$ satisfy this property (they lift to holomorphic vector fields on the \qe local covers), and multiplying by smooth functions is harmless. In order to prove~\lref{flow.2}, let $x_0\in X_0^{\rm orb}$ be an arbitrary point and let  $U\subset \mathfrak X^{\rm orb}$ be a small connected open neighborhood of  $x_0$ admitting a smooth quasi-étale cover $p:\wh U\to U$. We can find $U'\Subset U$ such that for $|t|\le s$ (with $s>0$ small enough) the flow $F_t$ is defined on $U$ and satisfies $F_t(U')\subset U$. Remember that $\wh v \defn p^*v|_{\reg U}$ extends to a smooth vector field on $\wh U$ which we still denote by $\wh v$, and whose flow we denote by $\wh F_t$. Since $p$ is étale over $\reg U$, uniqueness of flow ensures that we have a commutative diagram
\[
 \begin{tikzcd} 
p^{-1}(U')  \arrow[d, "p"] \arrow[r,"\wh F_t"] & p^{-1}(F_t(U')) \arrow[d,"p"] \\
U'   \arrow[r, "F_t"]  & F_t(U').
 \end{tikzcd} 
\]
Indeed, since $p$ is a local diffeomorphism over $\reg U$, we get
\[F_t \circ p = p \circ \wh F_t\ \text{on}\ p^{-1}(\reg U),\]
hence everywhere by continuity of the above maps. In summary, $F_t:U'\to F_t(U')$ is an homeomorphism which therefore lifts to the diffeomorphism $\wh F_t$ between the manifolds  $p^{-1}(U')$ and its image $p^{-1}(F_t(U'))$.  That is, $F_t$ induces an orbifold diffeomorphism between $U'$ and $F_t(U')$. Item~\lref{flow.2} is now proved. 
\end{proof}

Let us now consider the orbifold diffeomorphisms $F_t^{\rm orb}: X_t^{\rm orb} \to X_0^{\rm orb}$, and let $h_0$ be an orbifold Hermitian metric on $T_{X_0^{\rm orb}}$. Finally, let $\alpha_0$ be a closed orbifold form with compact support on $X_0^\orb$ representing a class $a_0\in \HH{2n-4}.X_0.\R.$. We have  
\begin{align*}
\ccorb2{X_0} \cdot a_0 & = \int_{X_0^{\rm orb}} \cpc2{\mathrm{orb}}{X_0^{\rm orb},h_0} \wedge \alpha_0 \\
&= \int_{X_t^{\rm orb}} (F_t^{\rm orb})^*\left(\cpc2{\mathrm{orb}}{X_0^{\rm orb},h_0} \wedge \alpha_0\right) \\
&= \int_{X_t^{\rm orb}}\cpc2{\mathrm{orb}}{X_t^{\rm orb}, (F_t^{\rm orb})^*h_t} \wedge (F_t^{\rm orb})^*\alpha_0\\
&= \ccorb2{X_t} \cdot F_t^*a_0
\end{align*}
where the last line comes from the fact that we have a commutative diagram
\[ \begin{tikzcd}
\HHdRc{2n-4}.X_t^{\rm orb}, \C.  \arrow[r,"\sim"] & \HH{2n-4}.X_t.\C. \\
\HHdRc{2n-4}.X_0^{\rm orb}, \C.  \arrow[u, "(F_t^{\rm orb})^*"] \arrow[r, "\sim"]  & \HH{2n-4}.X_0.\C. \arrow[u,"F_t^*"]
\end{tikzcd} \]
so that~\lref{formula c2} is proved.

Finally, we must show that $C_X > 0$.
Since $C_X$ is invariant under locally trivial deformation, one can use~\cite[Cor.~1.3]{BL18} and~\cite[Cor.~3.10]{BakkerGuenanciaLehn20} to deform $X$ locally trivially to a projective IHS variety $Y$.
\cref{cy} shows that $C_Y > 0$, which concludes the proof of the proposition.
\end{proof}

\subsection{Simultaneous proof of~\cref{th:MY singular Kahler} and~\cref{th:torus quotients}}

Here we closely follow the arguments from~\cite[proof of Thm.~5.2]{CGG}.

Let $(X, \Delta)$ be as in \cref{setup:flat} and such that $\ccorb1{X,\Delta}=0$. We denote by $X^\circ \defn (X,\Delta)_{\rm orb}$ the open locus where the pair has quotient singularities, and set $\Delta^\circ \defn \Delta|_{X^\circ}$.  It has been proved in \cite[Cor.~1.18]{JHM2} that abundance holds for such a pair and in particular $K_X+\Delta$ is torsion. We can then apply \cref{shokurov} and infer the existence of an orbi-\'etale map $f\from Y\to X$ such that
\[\O Y\isom K_Y\isom f^*(K_X+\Delta).\]
Arguing as in the proof of formula~\lref{mult1}, one has:

\begin{lem}
We have the identity
\begin{equation} \label{eq:multiplicativity c2}
\ccorb2Y \cdot f^*(\alpha)^{n-2} = \deg(f) \, \ccorb2{X,\Delta} \cdot \alpha^{n-2}.
\end{equation}
\end{lem}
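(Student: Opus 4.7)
The plan is to argue in complete parallel with the derivation of formula \lref{mult1}, but exploiting the fact that the situation is now cleaner: since $f \from Y \to X$ is orbi-\'etale for $(X,\Delta)$, there is no extra codimension-one ramification to keep track of, so no auxiliary divisor $H$ is needed and the orbifold structures downstairs and upstairs match directly.

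First, I would set up compatible orbifold structures in codimension two. Let $X^\circ \subset X$ be the orbifold locus of $(X,\Delta)$, with a smooth orbi-\'etale orbi-structure $\{(U_\alpha, p_\alpha, X_\alpha)\}$ given by \cref{klt orb}, and set $Y^\circ \defn f^{-1}(X^\circ) \cap (Y,\emptyset)_{\mathrm{orb}}$. Since $f$ is orbi-\'etale, each normalized fibre product $Y_\alpha \defn (X_\alpha \times_X Y^\circ)^\nu$ gives a smooth quasi-\'etale cover of an open subset of $Y^\circ$ (the map $X_\alpha \to U_\alpha$ is orbi-\'etale and $f$ is orbi-\'etale with the same orbi-multiplicities), and the resulting orbi-structure on $Y^\circ$ is compatible with that of $X^\circ$ via $f$. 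The complements $X \setminus X^\circ$ and $Y \setminus Y^\circ$ both have codimension $\ge 3$, and the restriction $f^\circ \from Y^\circ \to X^\circ$ has degree $\deg(f)$.

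Second, I would produce explicit Chern--Weil representatives adapted to $f$. Choose an orbifold K\"ahler metric $\omega$ on $(X^\circ, \Delta^\circ)$ using \cref{lem:kahler orbi}; because $f$ is orbi-\'etale, the pullback $(f^\circ)^*\omega$ is a smooth orbifold K\"ahler metric on $Y^\circ$, and the isomorphism of orbi-bundles
\[
\Omegar{Y^\circ}{1} \;\isom\; (f^\circ)^{[*]} \Omegar{(X^\circ,\Delta^\circ)}{1}
\]
(which holds by \lref{adl.2} since $f$ is orbi-\'etale) identifies the associated orbifold Chern form of $(f^\circ)^*\omega$ with the pullback $(f^\circ)^* \chorb2{\Omegar{(X^\circ,\Delta^\circ)}{1}, \omega}$. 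Next, pick a compactly supported closed orbifold form $\Omega$ on $X^\circ$ representing $\psi^{-1}(\alpha^{n-2}) \in \HHdRc{2n-4}.X^\circ,\R.$ under the isomorphism in~\lref{eq:cohomology 2n-4}. Then $(f^\circ)^*\Omega$ is a compactly supported closed orbifold form on $Y^\circ$ representing $f^*(\alpha^{n-2})$, the commutativity of the de Rham--Weil isomorphism with pullback being standard.

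Third, the identity follows by the usual change of variables for a finite map that is \'etale in codimension two:
\begin{align*}
\ccorb2Y \cdot f^*(\alpha)^{n-2}
& = \int_{Y^\circ} \chorb2{\Omegar{Y^\circ}{1}, (f^\circ)^*\omega} \wedge (f^\circ)^*\Omega \\
& = \int_{Y^\circ} (f^\circ)^* \!\left( \chorb2{\Omegar{(X^\circ,\Delta^\circ)}{1}, \omega} \wedge \Omega \right) \\
& = \deg(f) \int_{X^\circ} \chorb2{\Omegar{(X^\circ,\Delta^\circ)}{1}, \omega} \wedge \Omega \\
& = \deg(f) \, \ccorb2{X,\Delta} \cdot \alpha^{n-2}.
\end{align*}
The main (mild) obstacle is purely bookkeeping: one must verify that the chosen orbi-structures on $X^\circ$ and $Y^\circ$ are genuinely compatible through $f$, so that the pullback of the orbifold metric is smooth in the orbifold sense and the Chern--Weil representatives transform correctly. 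This is automatic from the definition of orbi-\'etale maps, and with that check in hand the identity reduces to the change-of-variables formula for an \'etale cover on the open subset $Y^\circ \to X^\circ$, whose complement is negligible for top-degree integration.
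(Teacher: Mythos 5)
Your proposal is correct and follows essentially the same route as the paper: choose a compactly supported orbifold representative of $\alpha^{n-2}$ on $X^\circ$ and an orbifold metric on $\Omegar{(X^\circ,\Delta^\circ)}1$, note that the orbi-\'etale hypothesis (plus purity of branch locus on the fibre products with the local smooth charts) makes the pullback a smooth orbifold metric on $\Omegar{Y^\circ}1$, and conclude by change of variables, the branch locus $f^{-1}(\supp\Delta)$ being negligible for the integration. The only cosmetic differences are that the paper takes an arbitrary orbifold Hermitian metric rather than a K\"ahler one and restricts the integral explicitly to $X^\circ\setminus\supp\Delta$, where $f$ is genuinely \'etale, before invoking the degree formula.
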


\begin{proof}
Let $a$ be an orbifold differential form of degree $2n-4$ with compact support in $X^\circ$ representing $\alpha^{n-2}$ and let $h$ be an orbifold Hermitian metric on $\Omega^1_{(X^\circ,\Delta^\circ)}$. Consider the space $Y^\circ=f^{-1}(X^\circ)$; by taking a fiber product with local smooth charts of $X^\circ$, it follows easily from purity of branch locus that $Y^\circ$ admits a smooth orbistructure and that $f^*h$ induces an smooth Hermitian metric on $\Omega_{Y^\circ}$. In particular, we have 
\begin{align*}
\ccorb2{Y}\cdot f^*(\alpha)^{n-2}&=\int_{Y^\circ}\cc2{\Omega_{Y^\circ},f^*h} \wedge f^*a\\
&=\int_{Y^\circ\setminus f^{-1}(\supp \Delta)}\cc2{\Omega_{Y^\circ},f^*h} \wedge f^*a\\
&=\deg(f)\, \int_{X^\circ \setminus \supp \Delta}\cc2{\Omega_{(X^\circ,\Delta^\circ)},h} \wedge a\\
&=\deg(f)\, \int_{X^\circ }\cc2{\Omega_{(X^\circ,\Delta^\circ)},h} \wedge a\\
&=\deg(f)\, \ccorb2{X,\Delta}\cdot \alpha^{n-2},
\end{align*}
which proves the lemma.
\end{proof}

Both members of the equation~\lref{eq:multiplicativity c2} being simultaneously non-negative or zero (and $f^*(\alpha)$ still being a \kahler class on $Y$), we shall replace $X$ with $Y$ and assume from now on that there is no orbifold structure in codimension one, i.e.~that $\Delta = 0$.

By \cite[Thm.~A]{BakkerGuenanciaLehn20}, there exists a finite, Galois quasi-étale cover $f\from X'\to X$ such that $X' \isom T\times \prod_{i\in I} Y_i \times \prod_{j\in J} Z_j$ where $T$ is a torus, $Y_i$ are CY varieties and $Z_j$ are IHS varieties.
By \cite[Prop.~5.6]{GK20}, we have 
\[\ccorb2{X'} \cdot f^*\beta^{n-2} = \deg(f) \,\ccorb2{X} \cdot \beta^{n-2},\] 
while $f^*\beta$ is still a \kahler class by \cite[Prop.~3.5]{GK20}.
All in all, there is no loss in generality assuming that $X=X'$ is split, which we do from now on.

Since $\HH1.Y_i.\R.= \HH1.Z_j.\R.=0$, the Künneth decomposition on the space $\HH2.X.\R.$ enables us to write 
\[\beta= p_T^*\beta_T+\sum_{i\in I} p_{Y_i}^*\beta_{Y_i}+ \sum_{j\in J} p_{Z_j}^* \beta_{Z_j}\] 
where $\beta_T,\ \beta_{Y_i}$ and $\beta_{Z_j}$ are \kahler classes on $T,\ Y_i$ and $Z_j$ respectively.
In particular, we get 
\[\ccorb2{X} \cdot \beta^{n-2} = \sum_{i\in I} \lambda_i \, \ccorb2{Y_i}\cdot \beta_{Y_i}^{\dim(Y_i)-2}+ \sum_{j\in J} \mu_j \,\ccorb2{Z_j}\cdot \beta_{Z_j}^{\dim(Z_j)-2}, \]
where $\lambda_i,\, \mu_j>0$ are positive combinatorial coefficients.
\cref{cy} and \cref{ihs} imply that the above quantity is non-negative, and strictly positive unless $I = J = \emptyset$; i.e.~unless $X = T$ is a torus.
\cref{th:MY singular Kahler} and \cref{th:torus quotients} are now proved. \qed

\subsection{Proof of \cref{th:characterization torus quotient}}

To finish, we prove \cref{th:characterization torus quotient} by proving the various implications separately, similar to \cref{main3}.

\subsection*{\lref{torus.1} $\imp$ \lref{torus.2}}

This is \cref{th:torus quotients}.

\subsection*{\lref{torus.2} $\imp$ \lref{torus.3}}

Let $\pi \from \C^n \to (X, \Delta)$ be the universal cover of $(X, \Delta)$.
Endowing the orbifold $(X, \Delta)$ with a metric, we infer that the group $\Gamma \defn \piorb{X,\Delta}$ is then (isomorphic to) a discrete cocompact subgroup of $\C^n \rtimes \mathrm{U}(n)$.
We can then resort to Bieberbach's theorem and exhibit a finite index normal subgroup $\Lambda \subset \Gamma$ that is a lattice $\Lambda \subset \C^n$ acting by translation.
The universal cover factors through the corresponding quotient:
\[ \C^n \xrightarrow{\quad\;\;\quad} \factor{\C^n}{\Lambda} \xrightarrow{\quad f \quad} \factor{\C^n}{\Gamma} = X, \]
where $f$ is the quotient by the action of the finite group $G \defn \factor{\Gamma}{\Lambda}$ on the complex torus $T \defn \factor{\C^n}{\Lambda}$.
Combining this with the fact that $\pi$ is branched exactly at $\Delta$, we infer that $f$ is orbi-\'etale.

\subsection*{\lref{torus.3} $\imp$ \lref{torus.1}}

If $f \from T \to X$ is a Galois orbi-étale map (for the pair $(X, \Delta)$) from a complex torus, the section $(\mathrm d z_1 \wedge \cdots \wedge \mathrm d z_n)^{\tensor m}$ is $G$-invariant, where $G \defn \Gal(f)$ and $m \defn |G|$.
This proves that $m( K_X + \Delta ) \sim 0$ and thus that $\cc1{K_X+\Delta} = 0$.
Let $\omega_T$ be any \kahler metric on $T$ and let us consider
\[ \omega_f \defn \sum_{g\in G} g^*\omega_T. \]
It descends to an orbifold \kahler metric $\omega_X$ on $(X, \Delta)$ and, the map $f$ being orbi-étale, we have:
\[ \ccorb2{X,\Delta} \cdot [\omega_X]^{n-2} = \frac{1}{\deg(f)} \, \ccorb2T \cdot[\omega_f]^{n-2} = 0. \]
Since $[\omega_X]$ is a \kahler class, this ends the proof. \qed

\end{document}